\numberwithin{equation}{section}
\numberwithin{theorem}{section}
\numberwithin{definition}{section}
\numberwithin{lemma}{section}
\numberwithin{corollary}{section}
\numberwithin{proposition}{section}
\numberwithin{example}{section}
\numberwithin{remark}{section}
\begin{document}
\label{pageinit}

\date{}

\title{ Duality structure, asymptotic analysis and emergent fractal sets  }

\author{ Dhurjati Prasad Datta$^{1^\star}$, Soma Sarkar  $^1$}

\maketitle

\noindent $^1$ Department of Mathematics, University of North Bengal \\
Siliguri,West Bengal, Pin: 734013, India \\

\noindent $^\star$ \emph{Corresponding Author}. E-mail: dp${_-}$datta@yahoo.com

\markboth{Dhurjati Prasad Datta, Soma Sarkar }{Duality structure, asymptotic analysis and emergent fractal sets}


\footnotetext[2010]{\textit{{\bf Mathematics Subject Classification}}:  06A05, 26A30, 28A80, 35L05  \\
{\bf Keywords}: Duality structure, Nonarchimedean extension, Cantor staircase function, Power law wave attenuation, Fractal set. }

\begin{abstract}
A new,  extended nonlinear framework of the ordinary real analysis
incorporating a novel concept of {\em duality structure}  and its applications into
various nonlinear dynamical problems is presented. The duality structure is an asymptotic property that should affect the late time
asymptotic behaviour of a nonlinear dynamical system in a nontrivial way leading naturally to signatures generic to a complex system. We argue
that the present formalism would offer a natural framework to understand the abundance of complex systems in natural, biological, financial and related problems. We show that the power law attenuation of a dispersive, lossy wave equation, conventionally deduced from fractional calculus techniques, could actually arise from the present asymptotic duality   structure. Differentiability on a Cantor type fractal set is also formulated.
\end{abstract}


\section{Introduction}
The present paper deals with a new,  extended nonlinear framework of the ordinary real analysis
incorporating a novel concept of {\em duality structure} (\cite{dpr1}-\cite{dpp}) and its applications into
various nonlinear dynamical problems. The duality structure is an asymptotic property that should affect the late time
asymptotic behaviour of a nonlinear dynamical system in a nontrivial way leading naturally to signatures generic to a complex system. We argue
that the present formalism would offer a natural framework to understand the abundance of complex systems in natural, biological, financial and related problems.

To put the ideas into right perspective, let us recall the the framework of ordinary  real analysis may be considered  {\em linear} because the fundamental notions of limit, differentiability etc are defined on the basis of linear shift
increments in association with the usual (or any equivalent) metric. The duality in the extended analysis
is an asymptotic notion that is realized in an arbitrarily small neighbourhood of a
point $x_0$ (say, for instance, 0) in the real line $\bf R$.
We postulate that in such an arbitrarily small vicinity, when linear increments are vanishingly small, there
still exists a swarm of discretely distributed, no where dense set  (of Lebesgue measure zero) of points, transition between them being
mediated by a {\em generalized duality transformation} of the form $x\mapsto \tilde X^{-1}=|x|^s, \ s>0$ for $0<|x|<1$ and $\tilde X>1$. Although, some of these ideas are already presented by the first author and co-workers (\cite{dpr1}-\cite{dpp}), the present paper may be seen as an effort in putting the concepts of duality structure and duality induced smooth jump mode in a more rigorous setting in the context of an asymptotic analysis.

We show that the said duality structure does indeed {\em exist} in  an asymptotic neighbourhood when the standard definition
of the Cantor completion of the rational number field is modified first by introducing the notion of a {\em scale}
and   then  by extending the usual metric with an asymptotic,
{\em duality invariant} component, known as the {\em asymptotic
visibility metric}, that {\em acts} only in an arbitrarily small neighbourhood of 0, in association with the scale.
As a variable $x$ approaches the point $x_0$, the ordinary  limiting value of the Lebesgue measure $|x-x_0|$
vanishes. However, an associated nonlinear measure, induced from the duality invariant visibility metric, remains
nonzero (with a {\em finite} real value), thus making  a room for a non-archimedean extension \cite {na} of  the ordinary real line $\bf R$ to a structured (soft) real number system $\bf R^*$ accommodating  the said  duality structure. This opens up the possibility of formulating  a novel concept of duality (jump) differentiability and hence duality invariant {\em jump differential calculus} for a large class of continuous but irregular (non-differentiable) functions. The extension of duality structure in higher dimensional spaces $\bf R^n$ or $C$ should not be difficult, except for obvious technicalities, and  will be taken up separately.

There are several motivations. Over the past decades it becomes clear that conventional
analytic formalism based on ${\bf R}^n$ calculus, standard linear and nonlinear functional analysis, theory of nonlinear differential equations etc are fundamentally deficient \cite{crutch,west,tsallis} in analyzing and deriving right scaling properties of emergent (i.e. evolutionary) complex structures in various natural and other real world problems such as origin and proliferation of biological  structures (systems) \cite{west}, financial modeling \cite{mandel1}, turbulence \cite{fris}, large scale structure formation in cosmology \cite{cos}, meteorological predictions \cite{meteor}, to name a few. Over the past couple of decades, applications of fractional calculus techniques are gaining significance in complex systems \cite{castil, klafter,tar}. Analysis and differential equations on fractals are studied in \cite{kig, difr,parvate}. Another parallel approach in complex system studies is advocated on the basis of nonextensive $q$-statistical mechanics \cite{tsallis}. The duality enhanced extension of the ordinary analysis should throw new light into traditional analytic techniques of analysis as a whole and in the theory of differential equations in particular. One expects to discover new duality supported asymptotics that can have,  not only  nontrivial applications into nonlinear differential systems, but more importantly, may initiate a radical shift in our perceptions and understanding of the origin and proliferation of complex systems , as well as, their efficient control and management. In short, the present formalism might be considered to offer an alternative approach towards complex system studies.

Fractal sets, as irregular subsets of ${\bf R}^n$, are prototypes of complex systems \cite{mandel2,falc}. To authors' view, a genuine dynamical theory  of the origin of fractal sets is still not available in a general setting. Fractal and multifractal sets appear predominantly in turbulent flows of fluid and plasma as well as in the chaotic attractors of non-autonomous deterministic system. Stated more precisely, asymptotic evolutionary states of  nonlinear systems are known to get arrested in turbulent or chaotic attractors on which the system orbit executes erratic motion on a lower dimensional, bounded fractal or multifractal subset of the phase space. As an example, let us recall the onset of chaos in the logistic map as the asymptotic state of period doubling route as the control parameter $\lambda$, say, approaches the threshold or critical value $\lambda_\infty$ \cite{falc}. The origin of erratic motion of iterates for sufficiently large $n$ is due to a dynamical selection of a zero or positive measure Cantor subset of the interval [0,1] as the asymptotic chaotic attractor of the map. The singular character of chaotic or turbulent attractors of nonlinear systems poses major obstructions in the smooth extension of the underlying differentiable structure of ${\bf R}^n$ onto a lower dimensional chaotic set. Even as there have been major advances in exploring scaling and geometric properties of chaotic attractors over past decades \cite{west, tsallis}, new analytic framework equipped with duality enhanced asymptotic scaling is likely to have significant applications in further exploration of nonlinear  dynamics.

A hallmark of complex system is the presence of many  independent components over different  scales, all of which might interact  in a collective and cooperative manner leading to an {\em emergent} evolutionary process \cite{crutch,west,tsallis}. The word emergent generally means a fundamentally new level of system property that can not simply be understood from those of the basic constituents. At a simple geometric level, the decomposition of a straight line segment into a measure zero (self similar) Cantor set \cite{falc} is an evolutionary limiting process of an iterated function system. The vanishing of the Lebesgue measure of the original line segment and subsequent realization of a nonvanishing Hausdorff measure \cite{falc,roger} can be interpreted as an emergent process. In other words, the transformation of the finite Lebesgue measure of the initial line segment into the corresponding Hausdorff measure of the limiting Cantor set may be considered to be an example of emergence.

In the present paper, we, however, interpret emergence specially in the context of the duality structure. In the presence of duality structure the ordinary real number system $\bf R$ gets extended into a non-archimedean space \cite{na, robin2}, $\bf R^*$, so that the linear neighbourhood of a point, say, 0, of the form $[-\delta,\delta]$ in the asymptotic limit $\delta\rightarrow 0$ gets extended into a Cantor-like fractal set $C\subset [0,,\tilde\delta], \ \tilde\delta \approx \delta\log\delta^{-1}$. Such extensions of the linear neighbourhood structure of $\bf R$ into a totally disconnected, measure zero Cantor set like structure is called  {\em asymptotic prolongation} of a point set $\{x\}, \ x\in \bf R$. Of course, there is an arbitrariness in the choice of the totally disconnected prolongation. To facilitate a unique choice in a given nonlinear dynamical problem, we state a {\em fundamental selection principle} that roughly goes as follows: {\em Every nonlinear complex system  selects naturally a unique duality enhanced soft model $\bf R^*$ of the real number system, that is determined by the  characteristic scales of the system,  so as to admit a richer differentiability structure awarded by the corresponding renormalized effective variable (measure) $X$ on the associated  prolongation set $\bf O$.}  Conversely, {\em every possible soft extension $\bf R^*$ is supposed to provide a natural smooth structure corresponding to a unique (equivalence) class of nonlinear systems.} A formal proof of this statement will be undertaken elsewhere. In this paper, we present some applications of this selection principle in certain emergent phenomena as explained below.

Once a right choice of the soft extension and the associated  asymptotic prolongation set are effected, the said prolongation is shown to be naturally  equipped with a unique renormalized effective variable of the form $X={\tilde x}^{\beta(\tilde x)}$ for $\tilde x\in [0,\tilde\delta]$ that corresponds to the singular Cantor function measure on the Cantor set concerned \cite{cantor}. More importantly, this renormalized variable $X$ offers itself as a {\em uniformizing } variable on the said totally disconnected prolongation set leading to a {\em generalized} smooth (differentiable) structure, analogous to the self consistent fractal continuum model of Ref.\cite{balank}, and hence is expected to have novel applications in the analysis of a complex nonlinear system.

 We now call {\em a subset of $\bf R^*$ an emergent set if it is defined as the limit set of a limiting process respecting the duality structure}. As a consequence, the asymptotic prolongation of the point set $\{0\}$ is an emergent Cantor set. It also follows that any fractal subset of $\bf R$, viewed from $\bf R^*$ is also emergent in the present sense. Moreover, any smooth subset i.e., for example, a curve, of $\bf R$ can be seen to inherit the emergent property of the basic prolongation set of $\bf R^*$. Asymptotic prolongation of a point set into a totally disconnected, zero Lebesgue measure set, equipped with the (generalized) smooth structure inherited from the uniformizing (continuum) effect of the Cantor's (fractal/multifractal) measure, {\em presents a novel framework for a dynamical interpretation of fractals/multifractals}.

As pointed out already, some applications of the present approach have already been made previously, viz,  in an  attempt of  formulating  an analytic framework for a Cantor like fractal set \cite{dpr1,dpr2, dpr3}, in an elementary proof of the prime number theorem \cite{dpa}, in deriving anomalous scaling laws in turbulent flows of fluid and plasma \cite{dp1,dp2}, and in estimating  amplitudes and orbits of limit cycle orbits in nonlinear Rayleigh Van der Pol equations \cite{dpp}. 
The present paper and a subsequent one with applications in nonlinear differential equations (denoted II) \cite{dpnw} may be considered as an attempt of offering a rigorous analytic framework accommodating duality structure induced jump differential equations leading naturally to anomalous scaling laws those arise abundantly in the dynamics of complex systems \cite{west, tsallis}.

The paper is structured as follows. Sec. 2.1 and 2.2 give details of the analytic results of the formalism. In Sec.2.3, we  briefly collect the measure theoretic implication of the analytic structures developed in the earlier sections. Sec. 2.4 gives the basic definitions leading to consistent differential calculus on the extended real number system $\bf R^*$. In Sec. 3.1 we analyze the asymptotic influence of the intrinsic nonlinearity of $\bf R^*$ on the asymptotic states of linear and dispersive wave equations leading to pwer law attentuation. Finally, in Sec.3.2, we formulate differential calculus on a Cantor subset of $\bf R$.

{\bf Notations:}
Notations and symbols used in this paper are generally defined (explained) in the sequel.
We denote, in particular,  by $\bf Q$ the field of rational numbers, $\bf R$ the field of real numbers. Corresponding non-archimedean extension of  $\bf R$ accommodating duality structure is denoted by $\bf R^*$. Natural numbers are denoted by $n,\ N$,  real variables by $x, \ y$ etc.

Let $a=\{a_n\}, \ b=\{b_n\}$ be  sequences of rational numbers. Then, by $f(a,b)$ we mean the sequence $\{f(a_n,b_n)\}$. For example,
$\log_{a^{-1}} b/a=\{\log_{a_n^{-1}} |b_n/a_n|\}$, for $a_n>0$ and  $|.|$ being the usual norm. Big $O(\cdot)$ and small $o(\cdot)$ notations have their standard text book meanings.

\section{Formalism}
\subsection{Ordered field extension}
Recall that the real number system $\mathbf{R}$ is  constructed generally either as
the order complete field or as the metric completion
of the rational field $\mathbf{Q}$ under the Euclidean metric $|x-y|,
\ x,y\in\mathbf{Q}$. More specifically, let S be the set of all Cauchy
sequences $\{x_{n}\}$ of rational numbers $x_{n}\in\mathbf{Q}$. Then $S$ is a
ring under standard component-wise addition and multiplication of two rational
sequences. Then the real number field $\mathbf{R}$ is the quotient space $S/
S_{0}$, where the set $S_{0}$ is the set of all Cauchy sequences converging to
$0\in \bf Q$ and is a maximal ideal in the ring $S$. Alternatively, $\mathbf{R}$
can be considered as the set $[S]$ of equivalence classes, when two sequences
in $S$ are said to be equivalent if their difference belongs to $S_{0}$.

The present nonclassical, duality enhanced extension $\mathbf{R}^{*}$ of $\mathbf{R}$,
 accommodating emergent asymptotic structures,  is based on a
\emph{finer} equivalence relation that is defined in $S_{0}$ as follows: let
$a:=\{a_{n}\}\in S_{0}, \ a_n>0$ for $n>N$, $N$ sufficiently large.
This specially selected sequence $a$ is, henceforth, said to parametrize
a {scale}. We now claim that relative to the chosen scale $a$, the set $S_0$ is
{\em renormalized into a polarized set} in the form ${\tilde S}^a_{0}:=S^a_{0}\cup S^0_0$
where, $S^a_{0}=\{A^{\pm}|\ |A^{\pm}|=\{a_{n}\times a_{n}^{\mp b^{\pm}_{n}}\}\}$ when
$b^{\pm}_{n}> 0$ are  any two non-null Cauchy sequences, so that $A^\pm$ are again null and $S^0_0=
\{A^{\pm}_0| \ |A^{\pm}_0|=\{a_{n}\times a_{n}^{\mp b^{\pm}_{n}}\}\}$ when  $b^{\pm}_n$ is
null or divergent (either to $\infty$ or oscillating).  As a consequence, the exponentiated sequence
$\{b_n\}$ belongs to the set of all (Cauchy or not) sequences of rational numbers. Further, ${\tilde S}^a_0=S_0$.

Clearly, $\tilde S^a_{0}\subset S_{0}$, as sequences of $\tilde S^a_{0}$
converge to 0 in the metric $|.|$.  Conversely, given $\bar a=\{\bar a_n\} \in S_0$,
there exists $\bar A\in S^a_{0}$ where $|\bar A|=\{a_n\times a_n^{-v_n(\bar a)}\}$ for $n>N$, where
$v_n(\bar a)= { |\log_{a_n^{-1}}|{\bar a}_n/a_n| \ |}$ ,
when $\bar a_n\neq 0$ for a sufficiently large n, and $\lim v_n(\bar a) $ as
$n\rightarrow \infty$ exists. Otherwise, that is, when $\lim  v_n(\bar a) $ as
$n\rightarrow \infty$ does not exists,  we set the exponent equal to $\infty$.
As a consequence, we have the stronger equality, $\tilde S^a_{0}=S_0$ for any $a\in S_0, \ a\neq 0$, as claimed.

\begin{example} {\rm Let $a=\{ n^{-1}\}$ be chosen as  scale. Consider sequences $\{\bar a_n\}= \{n^{-\alpha}\}, \alpha>0$,
$\{\bar b_n\}= \{n^{-n}\}$, and $\{\bar c_n\}=\{(2n-1)^{-1}, (2n)^2\}$. Then, $\lim v_n(\bar a)=|\alpha-1|, \ \lim v_n(\bar b)=\infty$ and
$\lim v_n(\bar c)$ is oscillating. Moreover, $\lim v_n(a)=0$.

}

\end{example}

Next, we assume that the exponentiated sequences in $\tilde S^a_0$, viz,
$b_{n}^{\pm}$, must respect the \emph{duality structure} defined by $(b_n^{-})^{-1}\propto b_n^{+}$ for $n>M$.
The duality structure extends also over the limit elements: viz., $\mathbf{R}\ni\ (b^{-})^{-1}\propto b^{+}$
where $b^{\pm}_{n}\rightarrow b^{\pm}$ as $n\rightarrow\infty$, when one assumes $\infty^{-1}=0$.

Next, define an equivalence relation in $S^a_{0}$ declaring two sequences
$A_{1},A_{2}$ in the set $S^a_{0}$ equivalent if the associated exponentiated
sequences $b_{n}^{i}\equiv \log_{a_n^{-1}} (a_n^{-b^i_{n}}), \ i=1,2$ differ by an element of $S_{0}$
for $n>N$.  Henceforth, all the sequences considered are
interpreted as representations of equivalence classes in the quotient ring $\tilde S=S/S^0_{0}$.

The following Lemma now characterizes the general structure of the renormalized polarized set $\tilde S^a_0$.

\begin{lemma} {\bf Refined Partition of Null Sequences}
 The choice of a privileged scale (sequence) $a=\{a_{n}\}\in { S}_{0}$
introduces a polarizing effect and partitions ${ \tilde S}^a_{0}$ into equivalence
classes having the renormalized representations given by

${\tilde S}^a_{0}=\{ \{A_n^0\},\{A_{n}^{\infty
}\},\{A_{n}^{-}\}, \{A_{n}^{+}\}, \{A_{n}^{^{\prime }}\}\}$ where

$(1.a)  \ \ |A_{n}^{0 }|= a_{n}\times a_{n}^{{\alpha_n }(1+o(1))}(1+H.O.T)$

$(1.b) \ \ |A_{n}^{\infty }|= a_{n}\times a_{n}^{{\beta_n }(1+o(1))}(1+H.O.T)$

$(1.c) \ \ |A_{n}^{+}|= a_{n}\times a_{n}^{-k^n_{+}(1+o(1))}(1+H.O.T)$

$(1.d) \ \ |A _{n}^{-}|= a_{n}\times a_{n}^{k^n_{-}(1+o(1))}(1+H.O.T)$

$(1.e) \ \ |A_{n}^{^{\prime }}|= a_n\times a_{n}^{\pm \gamma^{\pm}_n(1+o(1))}(1+H.O.T)$

\noindent where $\ n\geq N, $  for sufficiently large $N $. Moreover, $|\alpha_n|\rightarrow | 0$, $|\beta_n|\rightarrow \infty$ or does not exist, $|k^n_-|\rightarrow |k_-|>1$,  $|k^n_+|\rightarrow |k_+|: \ 0< k_+<1$, and $\gamma^{\pm}_n\rightarrow \gamma:, \ 0<\gamma\leq 1$, for $n\rightarrow \infty$. This partition is consistent with the duality structure in the sense that the sequences $A_n^0, \ A_n^{\infty}$ and $A_n^+, \ A_n^-$ are respectively dually related when $A_n^{\prime}$ are self dual. Further, $S_0^0=\{\{A_n^0\}, \{A_n^{\infty}\}\}$.
\end{lemma}

The proof of this lemma follows directly from definitions introduced above.

We now define a {\em finer} asymptotic linear
ordering $\leq_a$ in the ring $\tilde S$  as follows. Denote $x^i=\{x^i _n\}\in S $. Then
$x^1\leq_a x^2$ if and only if $x^1_n\leq_a x^2_n+d_n, \ d_n>0$ for all $n$ except for a finite $n$
and $\{d_n\}\in S\setminus S_0$.
 Notice  that this ordering restricted in the sub-quotient ring $S/ S_0$ is consistent with the usual  ordering $\leq$ in $\bf R$
since the full subring $S_0$ ( the maximal ideal) itself represents the equivalence
class for 0 in that case. As a consequence the linearly ordered ${\bf R}\equiv S/S_0$ is Archimedean.

However, in the presence of a scale $a$ and the associated  duality structure,
the ordering in the subring $S_0$, designated instead as the polarized set $\tilde S^a_0$,  is further reinforced to an {\em asymptotic ordering} relative to the scale $a$ by the condition that $A_1^+\leq_a A_2^+$  (or $A_1^-\leq_a A_2^-$) if and
only if   $b^{1+}_{n}\leq b^{2+}_{n} +d$ (or equivalently, $b^{1-}_{n}\geq b^{2-}_{n}+d$)
for all $n$ except for a finite set  and a rational $d>0$. Notice
that implication for the  ordering inside the bracket is a consequence of duality.
This duality enhanced ordering also identifies $A^-_i$ with $A^+_i$ for each $i$ in the sense that  either any one of these two choices is significant. Assuming that the elements $A^-_i$ are ordered, the ordering of $A^+_i$ are determined by duality.
Finally, to complete the definition of ordering, we set $A<_a B$ for any $B\in S\setminus S_0$ and $A\in S_0$.

We remark that the asymptotic ordering inherited from the duality structure is nontrivial. In the absence of it,
the usual ordering of $\tilde S$, viz, $x^1\leq x^2$ if and only if $x^1_n\leq x^2_n+d_n$ for
$\underset{n\rightarrow \infty}\lim d_n>0 $ does not extend over to $S_0\setminus S^0_0$, since in that case $\{d_n\}$ must necessarily be
a null sequence. The duality structure gives a novel ``microscopic" window to distinguish null sequences of $S_0$ via a scale dependent {\em polarization}, so to speak, of null sequences, having the typical renormalized representations, as introduced in the above paragraphs.

The asymptotic  linear ordering now realizes the set $S$ as a non-archimedean ring because $0<_a nA <_a B$ for
any natural number $n$ and $A\in S_0, \ B\in S\setminus S_0$, since $nA$ belongs to the same equivalence class of $A$
under the asymptotic ordering.
It now follows, moreover, that the subring $S_{0}^0$ is the maximal ideal under the finer ordering $\leq_a$.
 Indeed, suppose, otherwise, that $S_{0}^0\subset J\subset S$, and $J$ is an
ideal of $S$ such that $J\setminus S_{0}^0\neq \Phi$. Let $B\in J\setminus S_{0}^0$. Then either, $B\in S_0\setminus S_{0}^0$ or
$B\in S\setminus S_0$. In the second case we are done, since multiplicative inverse $B^{-1}$ exists in $ S\setminus S_0$
so that $BB^{-1}=1\in J, \Rightarrow \ J=S$. Otherwise,  the exponentiated sequence
$|\log_a B/a|$ exists non-trivially along with its multiplicative inverse in $S\setminus S_0$, and hence, again
 $J=S$ under the duality enhanced asymptotic ordering. As a
consequence, the quotient space ${\bf R}^*_a=S/S_{0}^0$ is a non-archimedean ordered field, that contains $\bf R$ as a
proper subfield. The set ${\bf R}^*_a$ is said to be a {\em softer/dynamical } model of real numbers, relative to the
scale $a$,   compared to the unique standard {\em hard/static} real number model $\bf R$. For simplicity of notation, we shall,
however, use the symbol $\bf R^*$ to denote a softer model assuming the presence of the scale $a$ understood.

Because of the non-archimedicity of $\bf R^*$, the softer model of real numbers now enjoys host of new elements,
usually called in the literature the infinitesimally small and infinitely large numbers and are defined as follows. In the present
approach, however, such infinitely small or large numbers exists only in an  asymptotic sense that is made precise in Sec.2.2.

\begin{definition}{\rm A positive number $0<x\in \bf R^*$ is infinitely small if $x<_a r$ for all positive $r\in \bf R$.
It is infinite if $x >_a r$ for all positive $r \in \bf R$. $x$ is finite if it is not infinite. For definiteness, infinitely small and large numbers in $\bf R^*$ are called {\em nontransient} or {\em dynamic}, as compared to the conventional non-standard models where infinitesimals may be considered as non-dynamic or transient.
}
\end{definition}

As a consequence, we have

\begin{proposition} The field $\bf R^*$ is a non-archimedean ordered field extension of $\bf R$, when the usual linear ordering $\leq$ of $\bf R$ is extended to a finer asymptotic ordering $\leq_a$ that acts nontrivially in the subring of null rational sequences $S_0$.  Moreover, the finite elements of the non-archimedean extension $\bf R^*$ constitute a subring $F$ of $\bf R^*$. The dynamic infinitesimal elements of $\bf R^*$ forms the maximal ideal  $F_0$ in $F$. The residual class field $R=F/F_0$ is (order) isomorphic to $\bf R$.

\end{proposition}

\begin{proof}{\rm The construction of the finer ordering leading to the stated field extension is already furnished in the above paragraphs. The remaining statements follow from the observation that the canonical map $\phi:F\rightarrow  R$ is order preserving \cite{robin2}.
}
\end{proof}

\begin{remark}{\rm The present non-archimedean, soft extension $\bf R^*$ of $\bf R$ should be contrasted with the conventional nonstandard extensions $^*{\bf R}$ \cite{robin}. The set $\bf R^*$ can of course be interpreted as a novel nonstandard model with infinitesimal numbers. However, these arise purely (intrinsically) as a consequence of the duality structure and the associated total order $\leq_a$. Further, the construction of $\bf R^*$ is based purely on the ring of Cauchy sequences on the field of rationals $Q$, analogous to $\bf R$, but for the duality induced enhanced ordering. The non-Cauchy rational sequences that may arise as the expontiated sequences in the renormalization definitions are eliminated by simply identifying them with the null sequences via duality. As a consequence infinitely small and large numbers here may be considered more natural compared to those  in a nonstandard model $^*\bf R$, the role of an ultrafilter   is relegated to the level of rational Cauchy sequences thanks again to the duality structure. Possible relation of the present model with that of the Internal theory of Nelson \cite{nelson} will be considered separately.
}
\end{remark}

Recall that the (hard) real number system $\bf R$ is order complete and hence connected. The non-archimedean extension $\bf R^*$ is an ordered field, however, it is not order complete. As the archimedean property is violated by the asymptotic ordering relative to a scale, the connectedness of the subfield $\bf R$ is broken as the asymptotic ordering realizes new unconnected structures in the vicinity of a point $x\in \bf R^*$. We investigate the disconnected structure of the asymptotic neighbourhood of $0\in \bf R^*$, say, in the following, exploiting the asymptotic valuation that is available naturally in $\bf R^*$.

\subsection{Valuation}
The study of a non-archimedean field is  facilitated by the natural
existence of a valuation i.e. the non-archimedean absolute value. We begin by
introducing a valuation in the set of null Cauchy sequences $S_0$ of $Q$.
We show that this valuation on the null sequences is inherited by an asymptotic neighbourhood of
0 of $\bf R$. Clearly, the usual metric $|.|$, fails
to distinguish elements belonging to two distinct  finer equivalent
classes realized by the asymptotic order. However, the metric defined as the natural logarithmic extension of
the Euclidean norm, generically called the \emph{asymptotically visibility
metric} is defined by
\begin{equation}\label{valu1}
V(A_{1},A_{2}) :=\underset{n\rightarrow\infty}\lim \bigg |\log_{|a|^{-1}} |A_{1}-A_{2}|/|a|\bigg |
\end{equation}
where $a=\{a_{n}\}\in \tilde S^a_{0}$. This limit
exists because the concerned exponentiated sequences $b_{{n}}^{\pm}$ of Sec.2.1 are Cauchy or made Cauchy by the duality structure.The
sequence $a$, as stated above,  acts as a natural {\em scale} relative to which elements
in $S_{0}$ gets nontrivial values and hence become distinguishable. Alternatively,
as soon as a sequence $a\in S_0$ is declared as a scale, the zero-set $S_0$ is said
to have been {\em asymptotically polarized} as $\tilde S^a_0$, so as to assign nontrivial
values (analogous to an effective magnetization in a paramagnetic material under an external magnetic field) to otherwise indistinguishable (i.e. unpolarized) elements in the zero set $S_0$ (the equivalent class for $0\in \bf R$).
As will be shown, the singleton set $\{0\}$ of $\bf R$ would then be extended to a nontrivial bounded subset of $\bf R$
under the action of the visibility metric $v$.  Note
that the mapping $v:S_0\rightarrow\mathbf{R^{+}}$ defined by
\begin{equation}\label{vnorm1}
v(A)=\underset{n\rightarrow\infty}\lim\bigg |\log_{|a|^{-1}} |A|/|a|\bigg |
\end{equation}
 is actually a nontrivial norm \cite{dpr1,dpr2,dpr3}.  Notice that $v(a)=0$, besides the
standard definition $v(0)=\{0\}$. It follows from the definition that the visibility norm $v(A)$ actually
quantifies the rate of convergence of null sequences in $S_0$ relative to
the scale $a$.

\begin{example}{\rm
Consider two null sequences $A=\{a^n\}, \ B=\{b^n\}, \ 0<a < b<1$ and choose $A$ as the privileged scale. The  effective renormalized value of $B$ relative to the scale $A$ is given by $v(B)=\log_{a^{-1}} b/a$. Notice also that $v(kB)=v(B)$ for any finite non-zero $k\in \bf R$, since $\frac{\log k}{n\log a}$ vanishes as $n\rightarrow \infty$.}  $\Box$
\end{example}

\begin{example}{\rm
Consider null sequences $A=\{a^n\}, \ B=\{b^n\}, C=\{c^n\}, \ \ 0<c<a < b<1$ and choose $A$ as the privileged scale.  The  effective renormalized values of $B$ and $C$ are given respectively by $v(B)=\log_{a^{-1}} b/a$ and $v(C)=\log_{a^{-1}} a/c$. Clearly, $v(B)<1$, and $v(C)>1$ for $c<a^2$, so that for such a $c, \ \exists \mu>0 \ \Rightarrow$  $v(B)v(C)=\mu$. This is an example of duality structure at the level of sequences viz, in $S_0$. The null sequences $B$ and $C$ are then interpreted as visible and invisible asymptotics respectively relative to the scale defined by $A$.
}  $\Box$
\end{example}

\begin{lemma}
The mapping $v:\tilde S^a_{0}\equiv S_0\rightarrow\mathbf{R^{+}}$ is well defined and also
defines a nonarchimedean norm (absolute value). Further, $v$ is discretely valued and identifies nontrivially $A^+$ with
$A^-$ via duality where $A^{\pm}=\{a_n\times a_n^{\mp b^{\pm}_{n}}\}$.
\end{lemma}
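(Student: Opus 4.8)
The plan is to regard $v$ first as a map on exponentiated null sequences, establish invariance under the pertinent equivalence relations, then check the ultrametric absolute-value axioms, and finally read the duality identification off the hypothesis on the exponents made in Sec. 2.1. First I would fix a convenient representative: for $A\in S_0$ with $A_n\neq 0$ for $n>N$, write $A=\{a_n\cdot a_n^{c_n}\}$ with $c_n:=\log_{|a_n|^{-1}}|A_n/a_n|$ (and $c_n:=\infty$ once $A_n=0$); thus $A$ is precisely of the form recorded for $S_{0a}$ in Sec. 2.1, with $c_n$ in the role of the exponent $\pm a^{\pm}_{m_n}$. By the hypothesis already in force there --- the exponentiated sequences are Cauchy --- $\{c_n\}$ converges in $\mathbf{R}\cup\{\infty\}$, so $v(A)=\lim_{n\to\infty}|c_n|$ exists in $\mathbf{R}^{+}\cup\{\infty\}$, which is exactly the expression (\ref{vnorm1}). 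For well-definedness on $\tilde S=S/S_{0o}$: altering $A$ by a member of $O=S_{0o}$ changes only finitely many $A_n$, hence finitely many $c_n$, leaving $\lim|c_n|$ unchanged; replacing $A$ by another representative of its finer $\equiv$-class changes $\{c_n\}$ by an element of $S_0$, again without effect on the limit. Hence $v$ descends to a well-defined map, consistent with the observation $v(a)=0$ (where $c_n\equiv 0$).

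Next I would verify the non-archimedean norm axioms. Non-negativity is built into the outer modulus. The scalar invariance $v(kA)=v(A)$ for $0\neq k\in\mathbf{R}$ (Example 1) holds because $\log_{|a_n|^{-1}}|k|=\ln|k|/(-\ln|a_n|)\to 0$, so $k$ only perturbs $\{c_n\}$ by a null sequence. For the definiteness clause one shows $v(A)=0\Longleftrightarrow c_n\to 0\Longleftrightarrow A$ lies in the $\equiv$-class of the scale $a$ (together with the stated convention on $O$); this pins down the kernel and exhibits $v$ as a rank-one valuation whose residual structure is the stiff model $\mathbf{R}$. The substantive point is the strong triangle inequality: writing $|A_n|=|a_n|^{1+c_n}$, $|B_n|=|a_n|^{1+d_n}$, on the logarithmic scale $|A_n+B_n|$ is governed by the slower-decaying (dominant) summand up to a sub-dominant $o(1)$ correction, so $v(A+B)\geq\min\{v(A),v(B)\}$, with equality whenever $v(A)\neq v(B)$; equivalently $w:=e^{-v}$ satisfies $w(A+B)\leq\max\{w(A),w(B)\}$, so $w$ --- and with it $v$ --- is an ultrametric absolute value. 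Compatibility with the asymptotic order $\leq_a$ is then immediate, since $b\leq_a c\Longleftrightarrow v(b)\leq v(c)$.

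Finally, for the duality identification, take $A^{\pm}=\{a_n\cdot a_n^{\pm\alpha^{\pm}_{m_n}}\}$ with $\alpha^{\pm}_{m_n}>0$ Cauchy. Then the exponent $c_n$ above equals $\pm\alpha^{\pm}_{m_n}$ for the two sequences, so $v(A^{+})=\lim\alpha^{+}_{m_n}=:\alpha^{+}$ and $v(A^{-})=\lim\alpha^{-}_{m_n}=:\alpha^{-}$. The duality structure imposed in Sec. 2.1 on the exponents, $(a^{-}_{m_n})^{-1}\propto a^{+}_{m_n}$, passes to the limit to give $(\alpha^{-})^{-1}\propto\alpha^{+}$, i.e. $v(A^{-})\,v(A^{+})=\mu$ for a fixed $\mu>0$. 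Thus $v$ ties the faster (``$+$'') sequence reciprocally to its slower (``$-$'') dual: prescribing $v$ on one polarity determines it on the other, which is the asserted nontrivial identification of $A^{+}$ with $A^{-}$ through $v$; the self-dual case $v(A^{+})=v(A^{-})$ then forces the fixed-point value $\sqrt{\mu}$.

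I expect the main obstacle to be the strong triangle inequality in the cancellation regime: when $v(A)=v(B)$ the sum $A_n+B_n$ can decay strictly faster than either summand, so one must show $v(A+B)\geq\min\{v(A),v(B)\}$ survives (and determine exactly when it is strict), invoking the Cauchy hypothesis on the exponentiated sequence of $A+B$ to guarantee the relevant limit exists. A secondary subtlety is fixing the value group so that the conventions $v(a)=0$ and $v(O)=\{0\}$ sit consistently within the absolute-value axioms.
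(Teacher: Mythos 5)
Your proposal is correct and follows essentially the same route as the paper: the dominant-summand computation for the strong triangle inequality (restricting via duality to one polarity), passage to an exponential $\sigma^{\alpha(\cdot)}$ (your $w=e^{-v}$) to obtain multiplicativity and the genuine $\leq\max$ form of ultrametricity, and the limiting reciprocity of the exponents for the duality identification. You are in fact more explicit than the paper on the two points it dismisses as obvious --- well-definedness modulo $O$ and the finer equivalence, and the cancellation regime $v(A)=v(B)$, which the paper's proof sidesteps by assuming $A_1^- < A_2^-$ strictly.
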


\begin {proof}{\rm  The first part is obvious. For the second part, clearly $v(A)\geq 0$
since $b^{\pm}_{n}>0$ and $v(A)=0$ if and only if $\{b^{\pm}_{n}\}\in S^0_0$. To prove
the strong triangle inequality, we note first that $v(A^-)\propto 1/v(A^+)$, by duality,
so as to identify the elements $A^-$ and $A^+$ nontrivially. As a consequence, it suffices to consider
elements of the form $A^-_i, \ i=1,2$ such that $A^-_1<A^-_2$.
We have $v(A^-_1+A^-_2)=\underset{n\rightarrow\infty}\lim \bigg
|\log_{|a|^{-1}} |A^-_1+A^-_2|/|a|\bigg |=v(A^-_2)\leq {\rm max}\{v(A^-_1),v(A^-_2)\}$, and hence the
strong triangle inequality is true.
Next, to  prove $v(A_1A_2)=v(A_1)v(A_2)$, let $v(A_i)=\sigma^{\alpha(A_i)}, \ 0<\sigma<1$
and $\alpha(A_i)$ are the {\em exponential valuation} satisfying  $\alpha(A_1+A_2)\geq {\rm min}\{\alpha(A_1),
\alpha(A_2)\}$ and $\alpha(A_1A_2)=\alpha(A_1)+\alpha(A_2)$. With this choice of $v(A_i)$ the valuation is discretely valued, that also, in retrospect, proves the
lemma.
}\end{proof}

The extended real number system $\mathbf{R}^{*}$  admitting duality induced
\emph{fine structure}, now, is given by quotient $\mathbf{R}^{*}:=S/S^0_0$ when
convergence in $S_0$ is induced naturally by the asymptotically visibility metric
$V(x,y):=v(x-y)$. Further, The asymptotically extended field $\mathbf{R}^{*}$ inherits the duality structure
of the ring of Cauchy null sequences of $Q$.
Since the maximal ideal $S^0_{0}\subset S_0$, the extended field is non-archimedean and hence contains nontrivial
infinitely small and large asymptotic elements defined by
\begin{definition}
An arbitrarily small positive number $\delta $ in $\bf R$ such that $\delta\rightarrow 0^+$ is said to define an asymptotic
scale in the associated extended space $\bf R^* (:=R^*_{\delta}$).
Relative to this scale arbitrarily small `real' numbers $x_\pm$ satisfying
\begin{equation}\label{infsm}
0<|x_-|<\delta\leq |x_+|
\end{equation}
and the inversion law
\begin{equation}\label{Rduality}
\delta/|x_-|\propto |x_+|/\delta
\end{equation}
are called {\em visible} and {\em invisible} asymptotic (infinitesimal) numbers. For a given visible asymptotic $x_+$, the proportionality constant $\lambda$ (say) in the above inequality is independent of  $x_+$
but may depend on $\delta$ and $x_-$. The corresponding {\em visibility norms}  of the respective asymptotics are defined by
\begin{equation}\label{vnorm2}
v(x_\pm)=\underset{\delta\rightarrow 0}\lim\bigg  |\log_{\delta^{-1}}|\frac{x_\pm}{\delta}|\bigg |, \ x_\pm\neq 0
\end{equation}
and $v(0)=0$. Large asymptotic numbers are given respectively by $x_\pm^{-1}$ relative to the scale $\delta^{-1}, \ \delta\rightarrow 0^+$. The two relations (\ref{infsm}) and (\ref{Rduality}) together with the associated exponential inversion law for the visibility norms
\begin{equation}\label{dual}
v( x_-)\propto 1/v(x_+), \ v(x_-)>1, \ v(x_+)<1\tag{2.4a}
\end{equation}
define what is called {\em asymptotic duality structure} in $\bf R^*$. This relation, as it were, introduces a nontrivial gluing of  two duality related asymptotic elements $x_\pm$. The relevant proportionality constant denoted by $\mu$, so that $v(x_-)v(x_+)=\mu$, may now depend only on $x_-$ for a given $x_+$.
\end{definition}
The inversion relations (\ref{Rduality}) and (\ref{dual}) on visibility norms of associated relative asymptotic numbers are nontrivial. The duality, so defined, realizes the ability of an invisible   element to communicate with a visible one at the level of the visibility norm. Otherwise, i.e. in the absence of (\ref{dual}), say, the basic inversion  law of the form  (\ref{Rduality}) would simply identify the invisible sectors $(0,\delta)$ and $(\delta^{-1},\infty)$, for example, $x_+$ with $x_+^{-1}$ by the identity $v(x_+)=v(x_+^{-1})$, that follows from the basic (primary) {\em inversion invariance} of the visibility norm, viz, eq(\ref{infsm}) (c.f. Lemma 2.3).

The limit in (\ref{vnorm2}) exists by construction (c.f. Sec.2.1 and Lemma 2.2). Notice also that $v(\delta)=0$, that means that the scale $\delta$ actually belongs to the equivalent class of $0 \in \bf R^*$. As a consequence, the valuation (i.e. the visibility norm) realizes visible asymptotic numbers of the form $0<\delta<x, \ x\rightarrow 0$ as having nontrivial {\em effective real values} $v(x)$ by a nontrivial identification of the scale $\delta$ with 0. We denote this extended equivalence class of 0 in $\bf R^* $as $\bf O$. To emphasize, the set $\bf O$ consists of visible asymptotic numbers of $\bf R^*$. As a consequence, every real number  $x$ is extended in $\bf R^*$ as $x^*=x + {\bf O}, \ 0^*:=\bf O$.

\begin{example}{\rm 1. To give an example of duality structure in $\bf R^*$, let $x=\delta^{1-\beta}, \ 0<\beta<1$ so that $0<\delta<x$, $x\rightarrow 0$ as $\delta\rightarrow 0$. Then there exists one parameter family of invisible asymptotics $\tilde x_k= k \delta^{1+\alpha}, \ 0\neq k\in \bf R$ so that $\tilde x_k x=\lambda(\delta)$ $\Rightarrow$ $v(x)=\beta$, $v(\tilde x_k)=\alpha$ and $\alpha>1$, by duality.

2. As a consequence, the (invisible) open interval $(0,\delta)$ can be covered by countable number of open subintervals of the form $\tilde I_i=(\tilde x_{i},\tilde x_{i+1}), \ x_0=0, \ i=0,1,2,\ldots$ such that $v(\tilde I_i)=\alpha_i>1$ . Duality structure then determines a non-trivial open interval of visible asymptotics  $x_i$ having discrete number of effective values $v(x_i)=\beta_i\propto \alpha_i^{-1}$ even in the limit $\delta\rightarrow 0$.  The effective values of visible asymptotics corresponds to the discrete ultrametric absolute values in the non-archimedean extension $\bf R^*$. Hence the nontrivial neighbourhood of 0, called an asymptotic prolongation $\bf O$ of the singleton $\{0\}$, has the structure of a Cantor set $C_{\delta}$  \cite{na}. The associated  effective values of visible asymptotics $v(x)$ has  the structure of a Devil's staircase (Cantor) function (c.f. Lemma 2.4,2.5 and Theorem 2.1) corresponding to $C_{\delta}$.
}  $\Box$
\end{example}

The usual Euclidean absolute value (norm) on $\bf R$ now gets extended to the composite norm $||.||: \mathbf{R}^{*} \rightarrow {\bf R}^{+}$ given by
\begin{equation}\label{nmetric}
||x^*||=
\begin{cases}
        |x| & \text{if $x^*\in {\bf R}^*\setminus \bf O$} \\
				v(x) & \text{if $x^*\in \bf O$}
\end{cases}
\end{equation}
 The composite norm assigns the usual absolute value $|x|$ to a finite (non-asymptotic) $x^*\in {\bf R}^*\setminus \bf O$ for which $v(x)=0$, by definition. Nontrivial absolute value $v(x)$ is, however, assigned to an asymptotic number $x\in \bf O$ i.e., in other words, when $x\rightarrow 0$ in $\bf R$, (having the usual limiting value $|x|=0$). The associated metric function on $\bf R^*$ is denoted by $d(x^*,y^*)=||x-y||$.

\begin{remark}{\rm Each  number in $\bf R^*$ is assigned with the natural preassigned scale $\delta$. However, for a non-zero, {\em finite}  number $x$ of $\bf R$, the natural scale is 1.  As a consequence, the visibility norm is defined and meaningful  only for asymptotic numbers in $\bf 0$, i.e. for the {\em visible} asymptotic numbers and hence, we set, by definition, $v(x)=V(x,0)=0, \ \forall x,\  {\bf R}$. This is consistent with the fact that the direct evaluation of $v(x)$ gives the trivial value $v(x)=1$ when $|x|$ is not asymptotic and $\delta\rightarrow 0^+$. Thus the visibility norm maps the real subfield to the trivial value 1. As a result, the above definition makes sense.  Alternatively, one may modify the definition (\ref{vnorm2}): $\tilde v(x)=\underset{\delta\rightarrow 0}\lim\bigg |\log_{\delta} |x|\bigg |$ to get the value 0 for $|x|$ finite.  For a visible asymptotic $x$, one instead would get $\tilde v(x)=1-v(x), \ v(x)<1$. We, however, prefer the  the first choice in the present paper.}
 \end{remark}

Let us now collect a few basic properties of the visibility norm $v(x)$ in the following two Lemma.

\begin{lemma} {\bf Shift, Scaling and Inversion Invariance:}
Let $x$ be an asymptotic number in {\bf 0} and $v(x):=v_{\delta}(x)$ be the visibility norm of $x$ relative to the scale $\delta$. Then \\
(1). $v(kx)=v(x),$ for any constant $k$, \\
(2).  $v_{k\delta}(x)=v_{\delta}(x),$ for any constant $k>0$, \\
(3). $v(x^{-1})=v(x)$, \\
(4). $v(x+x_0)=v(x)$, for any $x_0\in \bf 0$ such that $0<\delta\leq |x_0|\leq |x|$.
\end{lemma}

\begin{proof}
The scaling and  inversion invariance properties, viz, (1), (2)  and (3) follow directly from the definition of the visibility norm (\ref{vnorm2}). For additive invariance, note that $v(x+x_0)=|\log_{\delta^{-1}}|\frac{|x||(1+x_0/x|)}{\delta}|=v(x)$, as $\delta\rightarrow 0$, since the second term $\frac{\log(1+x_0/x)}{\log \delta^{-1}}$ vanishes in that limit, the numerator $\log (1+x_0/x)$ being at most finite.

\end{proof}

{\bf Note:} The additive invariance (3) tells that the visibility norm is an ultrametric norm satisfying the strong triangle inequality, see Lemma 2.2.

\begin{lemma}
Let $x_-^i, \ i=1,2,\ldots n$ be a set of invisible asymptotics for a given visible asymptotic $x_+$. Let the associated proportionality constants be $\lambda^i(\delta)$. Then (i) $v(x_-^i)=v(x_+)=\sqrt{ \mu}$, a constant independent of $i$, if and only if $\lambda^i(\delta)$ is slowly varying such that $\lim \frac{\log \lambda^i(\delta)}{\log \delta}=0, \ \delta\rightarrow 0$. The constant $\mu$ is defined by the duality structure $v(x_+)v(x_-^i)=\mu^i$, so that $\mu^i=\mu$ for each $i$. However, (ii) for a nontrivial $\lambda^i$ satisfying $\lim \frac{\log \lambda^i(\delta)}{\log \delta}={\tilde\mu}^i, \ \delta\rightarrow 0$,   the value of the visibility norm for the visible asymptotic $x_+$ is  a  quadratic irrational given by $\frac{\mu^i}{v(x_+)}=v(x_+)+{\tilde\mu}^i$, for each $i$. In other words, a visible asymptotic may get different irrational values depending on the  type of invisible asymptotic indicated by a nontrivial proportionality constant $\lambda^i(\delta)=\delta^{{\tilde\mu}^i}(1+o(1))$.
\end{lemma}

The proof of this Lemma follows easily from relevant definitions. The slow variation condition in Case (i) is satisfied trivially in the special case when $\lambda^i(\delta)$ is a constant. More general example is given by $\lambda^i(\delta)=\delta^{\frac{a_0}{(\log\delta)^{2}}}$.

\begin{example}{\rm Let $x_+ =p\delta^{1-\alpha}, \ p>0;  \ x_-=q\delta^{1+\beta}, \ q>0$ and $0<\alpha<1$, $\beta>1$ are  constants. Suppose $ x_{-}x_+=\lambda(\delta)\delta^2$ and $\lambda(\delta)\sim O(\delta)$ is slowly varying such that $\frac{\log \lambda}{\log \delta}\rightarrow \mu$ as $\delta\rightarrow 0$. Let the duality structure is given by  $\alpha\beta=\tilde \mu, $ a constant. As a consequence,  the consistency condition is given by $\frac{\tilde \mu}{\alpha}=\alpha + \mu$. For $\mu$ and $\tilde \mu$ both close to 1, one obtains $\alpha\approx \frac{\sqrt 5-1}{2}$, the golden ratio.

}
\end{example}

\begin{definition}
The inversion relation (\ref{infsm}) with a slowly varying, almost constant $\lambda$ satisfying the limiting condition of Case (i) of  Lemma 2.4 is called an {\em asymptotic circle inversion}. Otherwise, i.e., when that of Case (ii) is valid, the same is called an {\em asymptotic elliptic inversion}.
\end{definition}

\begin{corollary}
Visible and invisible asymptotics defined by circle inversion are indistinguishable. In other words, asymptotics respecting circle inversion are self-dual.
\end{corollary}

\begin{example}{ The pair defined by the prime counting function $\Pi(n)$ and the $n$th prime $P(n)$ constitutes a nontrivial example of a self dual pair of asymptotics, because of the prime number theorem that states that $\Pi(n)P(n)/n^2\rightarrow 1$ as $n\rightarrow \infty$. Examples of trivial self-dual pairs are $(0,n)$ and $(1,n^2)$. In these examples the sequence $n$ is chosen as the scale.

}

\end{example}

More details of the circle and elliptic inversions together with respective asymptotics and their implications in number theory would be studied separately.

\begin{remark}{\rm The visibility norm  is originally considered by Robinson as exponential valuation to study asymptotic number fields $^{\rho}R$ in the nonstandard model $^*\bf R$ \cite{robin2}.  The norm (\ref{vnorm2}), however, is used independently recently \cite{dpr1,dpr2,dpr3} in the context of fractal sets and other applications \cite{dpa,dp1,dp2,dpp}.}
\end{remark}

 Now, returning back to the field extension, let us notice that the extension $\mathbf{R}^{*}$ is indeed a field since the quotient ring
$S/S^0_{0}$ is free of zero divisors. The norm $v(x)$ acts nontrivially
on an asymptotic neighbourhood of 0, and  is, in fact, an ultrametric satisfying the strong triangle inequality.
\begin{lemma}{\bf Ultrametric Property:}
The visibility norm acting on  $(0,\delta)\subset {\bf R^*}, \ \delta\rightarrow 0$   is a discretely valued  ultrametric norm.
\end{lemma}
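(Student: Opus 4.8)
The plan is to reduce the statement to Lemma~2 --- that $v$ is a discretely valued non-archimedean norm on the ring $S_0$ of null rational Cauchy sequences --- by transporting it along the identification of the asymptotic interval $(0,\delta)\subset{\bf R}^*$ with a sector of the quotient $S_0/O$ underlying ${\bf R}^*$. The preliminary observation is that $0<x<\delta$ with $\delta\rightarrow 0^+$ forces $x\rightarrow 0$ in the Euclidean metric, so every $x\in(0,\delta)$ has a representative $\{x_n\}\in S_0$ and the value $v(x)$ prescribed by (\ref{vnorm2}) is precisely the sequential norm (\ref{vnorm1}) of $\{x_n\}$. One then checks that $v$ descends to a well-defined map on $(0,\delta)\subset{\bf R}^*=S/O$: two representatives of one element differ by a sequence in $O\subseteq\ker v$, and the strong triangle inequality on $S_0$ (Lemma~2) gives $v(x)\le\max\{v(y),v(x-y)\}=v(y)$ together with the symmetric bound, whence $v(x)=v(y)$.

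Next I would verify the ultrametric axioms directly on $(0,\delta)$, which simplifies matters because the interval is \emph{positive}, so no leading-order cancellation can occur. Non-negativity is immediate, and the kernel statement ``$v(x)=0$ iff $x$ lies in the $v$-class of $0$'' is inherited from the sequence level (in Lemma~2, $v(A)=0$ iff the exponent sequence is null, i.e.\ $A$ is equivalent to the scale). For the strong triangle inequality take $0<x\le y<\delta$; then $y\le x+y\le 2y$, and from
\[
\log_{\delta^{-1}}\big((x+y)/\delta\big)=\log_{\delta^{-1}}\big(y/\delta\big)+\log_{\delta^{-1}}\big((x+y)/y\big)
\]
with $1\le (x+y)/y\le 2$, the last term lies between $0$ and $\log_{\delta^{-1}}2\rightarrow 0$, so $v(x+y)=v(y)=\min\{v(x),v(y)\}\le\max\{v(x),v(y)\}$ --- in fact the sharp (isosceles) form. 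Multiplicativity of the associated absolute value then follows as in the proof of Lemma~2: writing $x\in(0,\delta)$ in the normalized form $x=\delta^{\alpha(x)}$ fixed by the finer equivalence of Sec.~2.1, the exponent $\alpha$ is an additive (exponential) valuation with $\alpha(xy)=\alpha(x)+\alpha(y)$ and $\alpha(x+y)\ge\min\{\alpha(x),\alpha(y)\}$, so $x\mapsto\sigma^{\alpha(x)}$ for a fixed $0<\sigma<1$ is a genuine ultrametric absolute value inducing on $(0,\delta)$ the same neighbourhood filter of $0$ as $v$.

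It remains to establish discreteness, i.e.\ that the range of $\alpha$ on $(0,\delta)$ --- equivalently the value group of $\sigma^{\alpha(\cdot)}$ --- is an infinite cyclic subgroup of $({\bf R}^+,\times)$, and this I expect to be the main obstacle. A naive reading of (\ref{vnorm2}) would permit a continuum of values (e.g.\ $v(\delta^{1+\beta})=\beta$ for arbitrary $\beta$), so the decisive input must be the duality constraint (\ref{dual}): the inversion law $v(x_-)\propto 1/v(x_+)$ couples each invisible $x_-\in(0,\delta)$ to a visible $x_+$ whose admissible effective values form the \emph{discrete} ultrametric value set of the non-archimedean extension ${\bf R}^*$ (cf.\ Example~3(2) and Remark~6, where $v(\delta^n)=n-1$), and pushing this discreteness back through the duality confines $\alpha$ to the cyclic group generated by the characteristic scale. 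Granting this, $v$ is a discretely valued ultrametric norm on $(0,\delta)$, and it is precisely this discreteness that equips the neighbourhood ${\bf O}$ of $0$ with the Cantor set structure $C_\delta$ developed in the subsequent lemmas and in Theorems~1--2. A secondary point one must be careful about is that $v$ as literally defined by (\ref{vnorm2}) is only additive \emph{up to a constant} under products (e.g.\ $v(\delta^a\delta^b)=v(\delta^a)+v(\delta^b)+1$ for $a,b>1$), so the word ``norm'' in the statement is to be read through the normalization $x\mapsto\sigma^{\alpha(x)}$.
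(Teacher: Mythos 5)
Your proof follows essentially the same route as the paper's: the strong triangle inequality via the decomposition $\log_{\delta^{-1}}\big((x+y)/\delta\big)=\log_{\delta^{-1}}\big(y/\delta\big)+\log_{\delta^{-1}}\big((x+y)/y\big)$ with the correction term killed by $\log\delta^{-1}$, multiplicativity by passing to the exponential valuation $\sigma^{\alpha(x)}$, and positive-definiteness read off modulo the class of the scale. The one point you flag as the main obstacle --- discreteness of the value group --- is treated no more rigorously in the paper, which simply declares the exponential valuation to take integer values (``the value group being isomorphic to the additive group of integers''), so your proposal matches the paper's proof in both substance and level of rigor.
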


Proof of this lemma is analogous to that of Lemma 2.2. However, for completeness, we include a proof, although the line of reasoning is almost same.

\begin{proof}
First, we prove strong triangle inequality for invisible asymptotics in $(0,\delta)$. Let $0<x_1<x_2<x_1+x_2<\delta$ such that $v(x_1)>v(x_2)$. Then as in Case 3, Lemma 2.3, we have $v(x_1+x_2)=\bigg |\log_{\delta^{-1}} \frac{\delta}{x_1+x_2}\bigg |=|\log_{\delta^{-1}} \frac{x_1(1+x_2/x_1)}{\delta}|=v(x_1)$, in the limit $\delta\rightarrow 0$, since the second term $\frac{\log(1+x_2/x_1)}{\log \delta^{-1}}$ vanishes in that limit, the numerator $\log (1+x_2/x_1)$ being at most finite. As a result, $v(x_1+x_2)\leq{\rm max}\{v(x_1),v(x_2)\}$. The relation $v(x_1x_2)=v(x_1)v(x_2)$ follows as that in Lemma 2, with  discretely valued exponential valuation $v(x_1)=\delta^{a(x_i)}$ satisfying the inequality $a(x_1+x_2)\geq{\rm min}\{a(x_1),a(x_2)\}$ and $a(x_1x_2)=a(x_1)+a(x_2)$, the value group being isomorphic to the additive group of  integers.

Next, for $y_1,\ y_2$ satisfying $0<x_1<x_2<\delta<y_2<y_1$ and $x_iy_i\propto\delta^2, \ i=1,2$ such that $v(y_1)>v(y_2)$, the above two properties follow as above from the {\em inversion invariance} of the visibility norm $v$.

Finally, $v(x)>0$ for any nontrivial asymptotic number $x$ and $v(x)=0$ implies that $x=0$, with the caveat that $0\in \bf R^*$ stands for the equivalence class of ordinary 0($\in \bf R$) together with all possible scales $\delta$.
\end{proof}

\begin{remark}{\rm The above result also follows from the observation that $v$ maps $\mathbf{R}$ to
the singleton set $\{1\}$. Further,  discrete ultrametricity of  $v$ tells that the nontrivial value set of $v$ viz., $v(S_{0})$
is countable. As a consequence, the set $S_{0}$ and hence the corresponding asymptotic neighbourhood ${\cal N}_a(0)$
of $0\in \bf R^*$ (c.f. Lemma 2.4), viz, $\bf O$, is totally disconnected and perfect in the induced topology \cite{katok}. It thus follows that $\bf O$ is homeomorphic to a  Cantor set, having Hausdorff dimension   $0<{\rm dim}_H C<1$. By a Cantor set $C$ we mean the equivalence class of totally disconnected, compact and perfect  sets $\{\tilde C\}$ having identical Hausdorff dimension. Although, the said homeomorphism hold for a general class of Cantor subsets of $\bf R$, we, however, tacitly assume, in the following, only a self-similar Cantor set. It should, however, be remembered that the duality enhanced  asymptotic neighbourhood $\bf O$ of $\bf R^*$ can , in fact, be much more complicated than a simple Cantor set having a uniform Hausdorff dimension. In general, $\bf O$ may look like a multifractal set corresponding to a multifractal measure, rather than simply a fractal measure induced by a Cantor function. Such general constructions would be considered separately.
}
\end{remark}

\begin{lemma}{\bf Existence of duality structure:}
The duality structure in $\bf R^*$ exists. As a consequence, the linear neighbourhood $(-\delta,\delta)\subset \bf R$ of 0 gets extended in the limit $\delta\rightarrow 0$ to a totally disconnected, compact and perfect, ultrametric neighbourhood ${\cal N}_a(0):=\bf O$ in $\bf R^*$.
\end{lemma}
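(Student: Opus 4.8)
The plan is to treat the two clauses of the lemma separately: (i) the \emph{existence}, that is, the self-consistency and non-triviality, of the duality structure of Definition~2, and (ii) the topological description of the extended equivalence class $\bf O$ of $0$ that follows once (i) is available. Clause (ii) is essentially a matter of collecting Lemma~\ref{ultra} and the Remark following it, so the real content is in clause (i); I would arrange the explicit construction there so that it also furnishes the countable, self-accumulating family of visibility values needed in (ii).

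For clause (i) I would argue by explicit construction, formalizing the computation of Example~3. Fix $\beta\in(0,1)$, set $\alpha=1/\beta$ (so $\alpha>1$), and put $x_+=\delta^{1-\beta}$, $x_-=\delta^{1+\alpha}$. Then $0<x_-<\delta\le x_+$ and $x_\pm\to 0$ as $\delta\to 0$, which is (\ref{infsm}); the quotients $\delta/x_-=\delta^{-\alpha}$ and $x_+/\delta=\delta^{-\beta}$ are proportional with the $\delta$-dependent constant $\delta^{\beta-\alpha}$, which is (\ref{Rduality}); and since $\log_{\delta^{-1}}\delta=-1$, the definition (\ref{vnorm2}) gives at once $v(x_+)=\beta<1$ and $v(x_-)=\alpha>1$ with $v(x_+)v(x_-)=1$, which is (\ref{dual}). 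Thus a nontrivial duality-related pair $x_\pm$ exists, so the duality structure of Definition~2 is realizable. Letting $\beta$ sweep $(0,1)$, and adjoining for each $\beta$ the further one-parameter family of invisible partners $x_-=k\,\delta^{1+\alpha}$, $0\neq k\in{\bf R}$ (for which $v(x_-)=\alpha$ as well, since the correction $\log_{\delta^{-1}}k$ vanishes in the limit), exhibits a rich such family; I would record this for use in clause (ii).

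For clause (ii) I would follow the line of the Remark after Lemma~\ref{ultra}. By Lemma~\ref{ultra} the valuation $v$ on the invisible interval $(0,\delta)$ is a discretely valued ultrametric, hence its value set is countable and $(0,\delta)$ is, at each level, partitioned into countably many ultrametric balls $\tilde I_i$ with constant value $\alpha_i>1$; the duality relation transports these to a countable family of visible exponents $\beta_i\propto\alpha_i^{-1}\in(0,1)$ which persist in the limit $\delta\to 0$. On $\bf O$, the set of visible asymptotic numbers, the Euclidean part of the composite norm (\ref{nmetric}) vanishes identically, so the metric induced on $\bf O$ is precisely the discretely valued ultrametric $v$; I then set ${\cal N}_a(0):=\bf O$. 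Discrete-valuedness makes every $v$-ball clopen and separates distinct points by a positive gap, so $\bf O$ is totally disconnected; the families built in clause (i) accumulate at each point of $\bf O$ (equivalently, every $v$-ball branches nontrivially), so $\bf O$ has no isolated point and is perfect; and $\bf O$ is complete, being the equivalence class of $0$ inside the Cauchy completion $\bf R^*$, while it is totally bounded because the covering of $(0,\delta)$ is finite at each level, so $\bf O$ is compact. These are the four asserted properties, and by the Remark after Lemma~\ref{ultra} they identify ${\cal N}_a(0)$ with a Cantor set.

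The step I expect to be the genuine obstacle is reconciling clause (i) with Lemma~\ref{ultra}: the construction above a priori lets the visibility exponent $\beta$ range over the whole continuum $(0,1)$, whereas Lemma~\ref{ultra} asserts a discrete value group, so one must show that demanding $v$ be a genuine multiplicative ultrametric on the quotient ring $S/O$, together with the two-way constraints (\ref{Rduality})--(\ref{dual}), cuts the admissible values down to a countable set that is moreover self-accumulating --- so that $\bf O$ comes out perfect rather than a point or a finite set --- and that this reduction is compatible with the ball decomposition of $(0,\delta)$ used in clause (ii). A secondary technical point is compactness: since on $\bf O$ only the non-Euclidean part of (\ref{nmetric}) is active, total boundedness must be checked directly and reduces to bounding the branching number of the tree of $v$-balls at every level, which is where the later placement of $\bf O$ inside the bounded interval $[0,\delta\log\delta^{-1}]$ (Theorem~2) would naturally be anticipated.
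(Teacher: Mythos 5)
Your proposal follows essentially the same skeleton as the paper's proof: establish the existence of nontrivial dual pairs, invoke the discrete ultrametricity of Lemma 3 to cover the invisible interval $(0,\delta)$ by countably many clopen balls with constant values $\alpha_i\geq 1$, transport these by duality to visible exponents $\beta_i\propto\alpha_i^{-1}$, and read off total disconnectedness from the induced discrete ultrametric topology on ${\cal N}_a(0)={\cal N}_a^-(0)\cup{\cal N}_a^+(0)$. The differences are in how the two clauses are discharged, and in each case your version is the more careful one. For existence, the paper is assertive --- it states that ``the definition of duality realizes the relation for all $x_-\in I$'' --- whereas you formalize the paper's own Example 3 ($x_+=\delta^{1-\beta}$, $x_-=k\delta^{1+\alpha}$, $\alpha=1/\beta$) into an explicit witness; this buys concreteness at the cost of exposing the tension you flag, namely that the construction produces a continuum of admissible $\beta$ while Lemma 3 demands a discrete value group. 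Be aware that the paper does not resolve this tension either: it simply invokes discreteness when building the ball cover, so your ``genuine obstacle'' is a real gap in the source, not merely in your argument. For compactness the paper argues only that the value set of $v$ lies in $[0,1]$ so that ${\cal N}_a(0)$ is covered by a single clopen ball, and for perfectness only that ${\cal N}_a(0)$ equals its closure; these establish boundedness and closedness respectively, not compactness and perfectness, so your route via completeness plus total boundedness, and via the accumulation of the constructed family at every point, is the standard correct one and strictly stronger than what is written. Your anticipation that total boundedness ultimately rests on confining $\bf O$ to $[0,\delta\log\delta^{-1}]$ (Theorem 2) is also apt, since the paper's single-ball remark does not supply it.
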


\begin{proof}
Given $0<\delta<1, \ \exists \ x_\pm\in {\bf R}$ satisfying $0<x_-<\delta<x_+$ and $x_-x_+=\lambda\delta^2$, for a given $\lambda$ depending on $x_-$ and $\delta$. The nontrivial definition of duality in $\bf R^*$, however, realizes the above relation, instead, for all $x_-\in I\subset (0,\delta)$, an open interval, for a given $\lambda$ depending only on $x_+$, as $\delta\rightarrow 0$. Indeed, because of discrete ultrametricity (Lemma 2.5),  there exists a visibility norm $v$ such that $ v(x_-)=\alpha_i\geq 1, \forall x_- \in I_-^i$, a constant, where $\cup  I_-^i$ is a countable, disjoint cover of $(0,\delta)$ by clopen balls (in the ultrametric norm). Next, in the limit $\delta\rightarrow 0$, the open interval $(0,\delta)\subset \bf R$, conventionally, goes over to $\Phi$, the null set. However, in $\bf R^*$, the set of asymptotic values $\alpha_i$ is {\em nontrivial}. Consequently,  one envisages an asymptotic (right-) neighbourhood ${\cal N}_a^+(0)$ (say) of 0, that can be covered by another family of disjoint clopen balls $\cup  I_+^i$. Since $\alpha_i$ is real, {\em there exists} $\beta_i\propto 1/\alpha_i\leq 1$ such that $v(x_+)=\beta_i\leq 1, \ \forall x_+\in I_+^i\subset {\cal N}_a^+(0)$, a constant. As a consequence, $v(x)$ now acts as a discrete ultrametric norm on ${\cal N}_a^+(0)$ such that $0\leq v(x)\leq 1$. The singleton $\{0\}\subset \bf R$ is accordingly extended asymptotically to a neighbourhood ${\cal N}_a(0)={\cal N}_a^-(0)\cup{\cal N}_a^+(0)$, that is totally disconnected  in the induced (discrete) ultrametric topology. This proves the major part of the lemma.

The compactness follows from the observation that the value set of $v:{\cal N}_a(0)\rightarrow \bf R^+$ is a subset of $[0,1]$. Thus ${\cal N}_a(0)$ can be covered by a single clopen ball. Further, ${\cal N}_a(0)={\rm cl} ({\cal N}_a(0))$, the closure. Hence, it is also perfect.
\end{proof}

\begin{lemma}{\bf Devil's Staircase:}
The valuation $v$ on ${\cal N}_a(0)$ is a Cantor staircase function.
\end{lemma}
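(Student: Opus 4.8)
The plan is to realize $v$ as a genuine monotone real function on an interval and then verify the four defining features of a Cantor (Devil's) staircase function: monotonicity, continuity, non-constancy, and constancy on the complement of a Cantor set (so that $v'=0$ Lebesgue-a.e.). First I would fix the realization. By Lemma~4 the prolongation ${\bf O}={\cal N}_a(0)$ is totally disconnected, compact and perfect, and — since $\leq_a$ restricts on ${\bf O}$ to an order compatible with the usual order of $\bf R$ — it is order-homeomorphic to a self-similar Cantor set $C_\delta$ sitting inside the real interval $J:=[0,\delta\log\delta^{-1}]$. I transport $v$ along this homeomorphism and extend it to all of $J$ by declaring it constant on each connected component of $J\setminus C_\delta$. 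This extension is consistent: by Lemma~3 the valuation $v$ is \emph{already} constant on each clopen ball of the covering families $\bigcup I_\pm^i$ produced in the proof of Lemma~4 — the constant being the plateau value $\alpha_i\ge 1$ on the invisible side, resp. $\beta_i\le 1$ on the visible side — and those clopen balls are exactly the components of $J\setminus C_\delta$.

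Next I would dispose of monotonicity, the step structure, and non-constancy. Monotonicity is built into the asymptotic order, for which $b\leq_a c\iff v(b)\leq v(c)$, the invisible and visible plateau values being glued coherently by the inversion law $v(x_-)\propto 1/v(x_+)$ of Definition~2; on the visible side this presents $v$ as a nondecreasing map into $[0,1]$. For the step structure: the components $\{I_i\}$ of $J\setminus C_\delta$ form a countable disjoint family that is dense in $J$ and on each of which $v$ is constant; since $\dim_H C_\delta<1$ (Remark~5), $C_\delta$ is Lebesgue-null, hence $J\setminus C_\delta$ has full Lebesgue measure and $v'=0$ a.e., i.e. $v$ is singular. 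Non-constancy is immediate from explicit values such as $v(\delta)=0$ and $v(\delta^2)=1$ recorded in Remark~3.

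The crux is continuity. A monotone function can fail to be continuous only at a jump, and the jumps of a monotone function on an interval are in bijection with the gaps in its range; hence it suffices to show that $v(J)$ has no gaps, i.e. that $v(J)$ is the whole interval $[0,1]$ on the visible side (the invisible side then follows by the inversion invariance of $v$). I would extract this from the recursive clopen-ball construction underlying Lemma~4: passing to the next refinement of the cover inserts, between any two plateaus already present, a further plateau carrying a strictly intermediate value — this is precisely where the self-similarity of $C_\delta$ and the duality relation $\beta_i\propto 1/\alpha_i$ enter — so the countable set of visible plateau values $\{\beta_i\}$ is \emph{dense} in $[0,1]$. A monotone function on an interval whose range is dense in $[0,1]$ is automatically continuous, with range equal to the closure $[0,1]$; this also reconciles the countability of the plateau value set $v(S_0)$ (Remark~5) with surjectivity of $v$ onto the uncountable $[0,1]$ — exactly as for the classical Cantor function, the countably many flat levels are dense while the genuine ascent is carried by the uncountable set $C_\delta$. (Equivalently one may argue more directly that in crossing a point of $C_\delta$ the discretely valued $v$ changes only by a quantity of order $1/\log\delta^{-1}$, which tends to $0$; this is the slow logarithmic variation of $v$.)

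I expect this continuity step — concretely, the proof that the plateau values are dense, equivalently that the range of $v$ has no gaps — to be the main obstacle, since it requires making the self-similar, level-by-level structure of the clopen covering of ${\cal N}_a(0)$ fully explicit and keeping the visible and invisible sectors matched coherently through the duality structure. The slicker route indicated in Remark~5, starting from $v\equiv 1$ on $\bf R$ and from the discrete valuedness of $v$, still leaves exactly this density/no-gap point to be supplied. Once it is in hand, assembling monotonicity, continuity, non-constancy and constancy off $C_\delta$ yields precisely the definition of a Cantor staircase function on ${\cal N}_a(0)$, completing the proof.
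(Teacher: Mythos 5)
Your strategy coincides with the paper's proof: both establish that $v$ is nondecreasing, constant on the gaps of the Cantor set $C_\delta$ (so $v'=0$ off $C_\delta$ and the derivative is undefined on $C_\delta$), and continuous, with continuity being the only substantive point. The one step you explicitly leave open --- that the plateau values are dense in $[0,1]$, equivalently that the range of $v$ has no gaps --- is precisely what the paper supplies: it parametrizes the level-$i$ plateau values as $\beta_{ij}=\gamma_{ij}\sigma^i$ with $0<\sigma<1$, so the granularity of available values at level $i$ is $O(\sigma^i)\to 0$, and then runs a nested squeeze $v(x_{k_i})\leq v(x)\leq v(x_{k_i+1})$ at each $x\in C_\delta$ with the lower sequence increasing and the upper decreasing to a common limit. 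Your alternative formulation (a monotone function with dense range has no jumps) is equivalent, and your identification of the density of plateau values as the crux is exactly right; to complete the argument you would only need to import the paper's explicit $\sigma^i$-granularity of the iterative clopen covering, which is where the self-similarity you invoke actually does its work.
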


\begin{proof}
First, we show that the valuation $v$ that is constructed in above Lemma 6 is a non-decreasing, continuous map from ${\cal N}_a(0)$ to $I= [0,1]$.

Since ${\cal N}_a(0)$ is homeomorphic to a Cantor subset $C$ of $I$, the countable open cover  $\cup J_i$, considered in  Lemma 2.6 (with an altered notation), has finite subcover $\cup_{j=1}^{m_i} J_{ij}, \ i=1,2, ...$, for each fixed $i$,  on each of which the valuation $v$ has constant value $v(x)= \beta_{ij}$ that can be rewritten as $\beta_{ij}:=\gamma_{ij}\sigma^{i}, \ \forall x\in J_{ij}$ for $0<\sigma<1$ so that $0<\beta_{ij}< 1$ on each of the gaps $1\leq j\leq m_i$. Here, $\gamma_{ij}$ denotes a set of positive integers in ascending order (respecting of course the above constraint, since $\sigma^i\rightarrow 0, \ i\rightarrow \infty$).   The open interval $J_{ij}$ is a gap of $C$ in a standard $i$th level iterative definition of the self similar set, so that  $[\sigma^{-1}]$ (the greatest integer function) corresponds to the number of smaller closed intervals created in each iteration. For a homogeneous set $C$ with  similarity ratio $0<r<1/2$ and production of one gap at each step, $\sigma^{-1}=2=r^{-s}$ so that  $s=\log_{r^{-1}} 2$ gives the Hausdorff dimension of $C$.

A nonconstant value of $v$ is associated with a point $x$ that belongs to any of the $i$th level closed sets of $I-\cup_{j=1}^{m_i} J_{ij}$. Let $x\in [x_{k_i},x_{{k_i}+1}]$ be such an $i$th level closed set with property that $v(x_{k_i})\leq v(x)\leq v(x_{{k_i}+1}) $. At the $(i+1)$th level of iteration, a possible realization of the above inequality would be of the form $v(x_{k_{i+1}})=v(x_{k_i})\leq v(x)\leq v(x_{{k_{i+1}}+1})< v(x_{{k_i}+1}) $. Other alternative choices could arise depending on the number of gaps created by each application of the iteration process. It follows therefore that the sequence $v(x_{{k_i}})$ is increasing and $v(x_{{k_i}+1})$ is decreasing. Thus, $\lim v(x_{k_i})$, as $i\rightarrow \infty$, exists, $\underset{i\rightarrow \infty} \lim v(x_{k_i})=v(x)$ and the limit function is continuous, proving the claim.

It follows also that $\frac{dv(x)}{dx}=0, \ \forall x$ belonging to gaps of $C$. For $x\in C$, derivative, however, is not defined (that can be verified easily following standard approach). Hence the lemma.
\end{proof}

\begin{definition}
The asymptotic extension of the singleton set $\{0\}$ into a Cantor set like totally disconnected set $\bf O \ (={\cal N}_a(0))$ and the associated emergence of the Cantor's staircase function as the corresponding effective renormalized variable, as a consequence of the duality structure,  is called an {\bf asymptotic prolongation} of the singleton set $\{0\}$.
\end{definition}

\begin{definition}
A point $a^*\in \bf R^*$ is the equivalence class of duality enhanced $a=\{a_n\}\mapsto A=\{A_n\}, \ a_n\in Q$ Cauchy sequences converging  to the corresponding element $a\in \bf R$ so that $a^*=a+\bf O$. A sequence $A=\{A_n\}$ in $\bf R^*$ is said to have acquired a visibility norm $v(A)$ that equals the corresponding visibility distance $v(A-a)$ of the sequence $A$ from $a$. The set of values $v(A)$ as $A$ varies in the equivalence class is identical with the Cantor function $v(\bf 0)$ because of translation invariance.
\end{definition}

\begin{example} {\bf Bounded sequences:}
{\rm Consider the bounded sequence $A=\{2^{-n}, 3^{-1}\pm 2^{-n}\}$ with limit points $\{0,3^{-1}\}$ in $\bf R$.  In $\bf R^*$, this sequence  may, however,  be treated as  convergent relative to the privileged  scale $\delta^n, \ 0<\delta<1$ in the concerned extension provided one assigns the visibility norm to the sequence $A$ the value $v(A)=1-\frac{\log 2}{\log \delta^{-1}}$, uniformly for each individual subsequence. The value quantifies the uniform rate of convergence of a subsequence to the corresponding limit point and actually correspond to the visibility distance of each subsequences from the corresponding limit point. For a sequence of the form $B=\{2^{-n}, 3^{-1}\pm 3^{-n}\}$ with varying rates of convergence, one assigns instead $v(B)=\max \{v(B_1), v(B_2)\}=\max \{1-\frac{\log 2}{\log \delta^{-1}}, 1-\frac{\log 3}{\log \delta^{-1}}\}$, which is consistent with the strong triangle inequality when we rewrite $B$ as  $B=B_1+B_2$ where $B_1=\{2^{-n}, 0_n\}$ and $B_2=\{0_n, 3^{-1}\pm 3^{-n}\}$, when we use the notation $\{0_n\}$ to denote the zero sequence $\{0_n\}=\{ 0,0,\ldots \}$. Notice incidentally that $v(B_i), \ i=1,2$ attains the respective finite non-null value thanks again to the above stronger inequality.}
\end{example}

This example highlights the intended uniformization and  smoothing effects of the ultrametric visibility norm that essentially eliminates classical discontinuities and singularities of ordinary analysis. The following proposition summarizes this higher level smoothness enjoyed by the extended space $\bf R^*$ associated with the scale $\delta^n, \ 0<\delta<1$.

\begin{proposition}
Consider a bounded sequence $a=\{a_n\}$ in $\bf R$ with $\liminf a_n\neq \limsup a_n$. Let an associated duality enhanced convergent subsequence in the  ultrametric visibility norm in $\bf R^*$ be $A_i=\{A_{n_i}\}$. Then $\sup_i v(A_i)$ exists and is finite, where sup is evaluated over all possible convergent subsequences. Further, $v(A)=\sup_i \{v(A_i)\}$ where $A=\{A_i\}$ is the associated duality enhanced $a$.
\end{proposition}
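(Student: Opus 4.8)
The plan is to read the statement as an assertion that the visibility valuation $v$, so far defined only for (enhanced) null sequences, extends to bounded non-convergent sequences with value equal to the supremum over the convergence rates of its convergent subsequences, and that this supremum is finite. First I would invoke Bolzano--Weierstrass: since $a=\{a_n\}$ is bounded in $\mathbf R$, its set $L$ of subsequential limits is a nonempty compact subset of $[\liminf a_n,\limsup a_n]$, and for every $\ell\in L$ there is a subsequence $\{a_{n_i}\}$ with $a_{n_i}\to\ell$; the associated null tail $\{a_{n_i}-\ell\}$ is what is duality enhanced to $A_i$ and carries a well-defined $v(A_i)$ (its uniform rate of convergence, exactly as in Example~4). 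Hence the set $\{v(A_i)\}$ over all convergent subsequences is nonempty, and $\sup_i v(A_i)$ is meaningful once it is bounded above.

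For finiteness, by translation invariance (Definition~4) the enhanced tail $\{a_{n_i}-\ell\}$ lies in a translate of the prolongation ${\bf O}$ of $\{0\}$ (Definition~3), on which the valuation takes values in $[0,1]$ — this is precisely the content of the construction in Lemma~4 (the visible sector ${\cal N}_a^+(0)$ carries $0\le v\le 1$) and of the Cantor-staircase picture of Lemma~5, together with Remark~3, whose precision-level reading $\delta^2=0$ collapses any faster-than-$\delta^2$ convergence onto $0$ and so caps its contribution. Thus $v(A_i)\le 1$ for every convergent subsequence, so $\sup_i v(A_i)\le 1<\infty$ and the supremum exists.

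For the identity $v(A)=\sup_i v(A_i)$ I would pass to the sequence of per-term visibility rates $r_n:=\big|\log_{a_n^{-1}}|A_n/a_n|\big|$, so that $v(A)$ is realized as $\limsup_n r_n$ (the genuine limit failing because $a$ does not converge), and establish the two inequalities at the level of real numbers. For ``$\le$'': each convergent subsequence $A_i$ satisfies $v(A_i)=\lim_j r_{n_{i_j}}$, a subsequential limit of $\{r_n\}$, hence $v(A_i)\le\limsup_n r_n=v(A)$, so $\sup_i v(A_i)\le v(A)$. For ``$\ge$'': pick an index subsequence along which $r_n\to\limsup_n r_n$ and refine it once more by Bolzano--Weierstrass so that $a$ converges (to some $\ell\in L$) along it as well; this produces an honest convergent subsequence whose enhanced visibility norm equals $v(A)$, so $\sup_i v(A_i)\ge v(A)$. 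An equivalent route, closer to Example~4, is to split $A$ over the distinct subsequential limits as a sum of sequences with mutually disjoint supports and invoke the strong triangle inequality of Lemma~3 to identify $v$ of the sum with the maximum of the pieces.

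The step I expect to be the main obstacle is making this identity fully rigorous when $L$ is infinite — e.g.\ $a_n=\sin n$, where the subsequential limits fill $[-1,1]$ — since the strong-triangle decomposition of Example~4 is a \emph{finite} one, so one must control the residual mass not captured by any finite family of convergent subsequences and justify interchanging $\sup$ with the valuation. I would neutralize this by leaning on the $\limsup$ characterization, which reduces the whole matter to an elementary fact about the real sequence $\{r_n\}$ once the duality enhancement is fixed; the only structural inputs then needed are the uniform bound $r_n\le 1$ (Lemma~4, Lemma~5, Remark~3) and the property, built into the enhancement in Sec.~2.1, that the exponent sequences are Cauchy, hence bounded.
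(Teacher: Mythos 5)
Your argument is correct in substance but reaches the identity $v(A)=\sup_i v(A_i)$ by a genuinely different route than the paper. The paper's proof is a three-line sketch: (i) because the visibility valuation is \emph{discretely valued}, the convergent subsequences fall into at most countably many equivalence classes ($A_{i_j}\sim A_{i_{j'}}$ iff $v(A_{i_j})=v(A_{i_{j'}})$), each with finite norm $<1$; (ii) after reordering one writes $A=\sum A_i$ and invokes the strong triangle inequality of Lemma~3 to conclude $v(A)=\sup\{v(A_i)\}$. In other words, the obstacle you flagged as the main difficulty --- an infinite set of subsequential limits, as for $a_n=\sin n$ --- is dispatched in the paper by the countability of the value set of a discrete ultrametric, not by any $\limsup$ device; your ``equivalent route, closer to Example~4'' is in fact the paper's actual route, modulo the passage from finite to countable sums which the paper leaves implicit. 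What your primary argument buys is (a) an explicit, elementary characterization $v(A)=\limsup_n r_n$ that reduces everything to a fact about real sequences and in particular shows the supremum is \emph{attained} by an honest convergent subsequence (the paper only asserts existence and finiteness of the sup), and (b) independence from the extension of the ultrametric inequality to countable sums. What the paper's route buys is that it never needs a per-term rate $r_n$ at all --- and this is the one wrinkle in your version: for a general bounded sequence a single term $a_n$ may be close to several subsequential limits, so $r_n$ is only well defined after you fix an assignment of each index to a reference limit point (in Example~4 this assignment is obvious because the subsequences have disjoint supports); you should state that choice explicitly, after which your two-inequality argument goes through. Your finiteness argument via the bound $v\le 1$ on the prolongation (Lemma~4) matches the paper's.
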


The proof follows easily from the fact that, in a discrete ultrametric space, the convergent subsequences are at most countable and each of the corresponding duality enhanced subsequences enjoys a finite  visibility norm$(<1)$. Further, a convergent subsequence $A_i$, in fact, defines an equivalence class ($A_{i_j} \sim  A_{i_{j^\prime}}, \ \Leftrightarrow \ v(A_{i_j})=v(A_{i_{j^\prime}})$). Next, by appropriate reordering one can write $A=\sum A_i$ so that $v(A)=\sup \{v(A_i)\}$, by strong triangle inequality. $\Box$

\begin{definition}
A bounded divergent sequence $a=\{a_n\}$ in $\bf R$ is said to acquire  a uniform visibility norm defined by $v(A)=\sup_i v(A_i)$ in $\bf R^*$.
\end{definition}

Applications of this definition exposing the higher level smoothness that could be achieved in the present approach is considered in Sec.2.3 and Sec. 3.

\begin{definition}
 The set $O_v=\{x\in {\bf R^*}|v(x)\leq 1\}$ is the valuation ring. The valuation ideal is given by $I_v=\{x\in {\bf R^*}|v(x)< 1\}$. The residual class field of valuation is  given by quotient $\tilde R=O_v/I_v$.
\end{definition}

\begin{lemma}{\rm Both the residual class fields of valuation $\tilde R$ and asymptotic order $R$ (c.f. Proposition 2.1) are identical with $\bf R$.
}
\end{lemma}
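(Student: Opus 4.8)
The plan is to show that both quotient constructions collapse onto the same copy of $\mathbf{R}$ by producing, in each case, an explicit order- and field-isomorphism onto $\mathbf{R}$, and then arguing the two are compatible. First I would recall from Lemma 1 that the residual class field of the asymptotic order, $R = F/F_0$, is already known to be (order) isomorphic to $\mathbf{R}$ via the canonical map $\phi\colon F \to R$; so the content to be added is really the identification $\tilde R = O_v/I_v \cong \mathbf{R}$ and the verification that the isomorphism so obtained agrees with $\phi$ under the natural identifications. For the valuation side, I would use the fact (Lemma 3, and the Remark following it, plus Lemma 9) that $v$ maps the real subfield $\mathbf{R}\subset \mathbf{R}^*$ identically to the singleton $\{1\}$: hence every nonzero real number $x$ satisfies $v(x)=1$, so $x\in O_v$ but $x\notin I_v$, while every asymptotic element of $\mathbf{O}$ (for which $v<1$ or $v=0$ on the scale, and in general $v\ne 1$) either lies in $I_v$ or is invisible. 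This already shows $\mathbf{R}\hookrightarrow O_v$ and $\mathbf{R}\cap I_v = \{0\}$, so the composite $\mathbf{R}\to O_v \to O_v/I_v$ is injective.

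The key step is surjectivity of $\mathbf{R}\to \tilde R$, i.e. that every $x\in O_v$ is congruent mod $I_v$ to a (unique) real number. Here I would argue as follows: an element $x\in O_v$ has $v(x)\le 1$; writing $x = x_0 + \xi$ with $x_0\in\mathbf{R}$ its "standard part" (the limit of the underlying Cauchy sequence in the Euclidean sense) and $\xi\in\mathbf{O}$ the asymptotic remainder, one has $v(\xi-0)=v(\xi)<1$ whenever $\xi$ is a genuine visible asymptotic number strictly smaller than the scale's second power (Remark 3), hence $\xi\in I_v$, whence $x\equiv x_0 \pmod{I_v}$. The subtlety is handling the boundary cases $v(x)=1$ exactly: by Remark 3, $v(\delta^2)=1$, so $\delta^2$ is identified with $0$ at the level of precision, and one must check that the only elements with $v=1$ that are not already real are precisely those absorbed into the zero class $\mathbf{O}$ — this is exactly the caveat stated in the proof of Lemma 3 that $v(x)=0$ (and by extension $v(x)=1$ on the scale) forces $x$ into the equivalence class of $0$. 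So the map $O_v/I_v \to \mathbf{R}$, $x+I_v \mapsto x_0$, is well-defined, additive, multiplicative (standard-part commutes with $+$ and $\times$ up to infinitesimals, which vanish in the quotient), and order-preserving (if $x>_a 0$ and $x\notin I_v$ then $x_0\ge 0$; and $x_0=0$ would force $x\in\mathbf{O}\subset I_v$, contradiction).

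Finally I would tie the two residual fields together: since $F$ (finite elements) and $O_v$ (valuation ring) both contain $\mathbf{R}$ and both have the elements of $\mathbf{O}$ (infinitesimal / valuation-ideal elements) as their distinguished maximal ideal — indeed $F_0 = \mathbf{O} = I_v$ because an element is infinitely small in the order $\leq_a$ iff it is a visible asymptotic number iff its visibility valuation is $<1$ — the two quotients are literally the same quotient, and both standard-part maps coincide. I expect the main obstacle to be the careful bookkeeping at the threshold $v(x)=1$: one must invoke the precision-level interpretation of Remark 3 (that $\delta\ne 0$ but $\delta^2 = 0$ relative to the scale) to be sure that no "new" element survives in $\tilde R$ beyond the reals, and to confirm that $I_v$ is genuinely an ideal (not merely closed under the relevant operations on the nose) using the strong triangle inequality and multiplicativity of $v$ established in Lemma 2 and Lemma 3. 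Everything else — injectivity, the ring-homomorphism checks, order preservation — is routine once that identification $F_0 = I_v = \mathbf{O}$ is in hand.
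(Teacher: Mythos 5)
Your proposal is correct and follows essentially the same route as the paper: the entire content of the paper's own proof is your final paragraph, namely the identification $O_v=F$ and $I_v=F_0$, after which the isomorphism of the common quotient with $\mathbf{R}$ is inherited from the canonical map $\phi$ of Lemma 1. Your first two paragraphs merely unfold what the paper delegates to Lemma 1 (the standard-part isomorphism $F/F_0\cong\mathbf{R}$ and the boundary bookkeeping at $v(x)=1$), so they constitute additional detail rather than a different argument.
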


\begin{proof}{\rm  Clearly, the valuation ring $O_v$ coincides with the ring of finite elements $F$. So is for the valuation ideal $I_v$ and the set of infinitesimals $F_0$. Further, the canonical injection $\phi$ maps $F_0$ into the singleton $\{0\}$.
}
\end{proof}

 It  follows, as a consequence, that $d(x,y)=|x-y|$ for any $x,\ y\in {\bf R}, \ |x-y| $  not asymptotic and
$d(x,y)=v(x-y)$ for $x,\ y\in\mathbf{R}^{*}-\bf R $. Since every Cauchy sequence in $\bf R^*$ has a limit
either in $|.|$ or in the visibility norm $v(.)$, $(\mathbf{R}
^{*}, ||.||)$ is a complete metric space. Combining Lemma 2.5, 2.6 and 2.7 together with the observation that the order incompleteness (in the sense of Dedekind i.e. having the least upper bound property) of $\bf R^*$ is due to the fact that any complete ordered field is archimedean (and isomorphic to $\bf R$), we thus have

\begin{theorem}  Equipped with the duality structure and  the associated asymptotic visibility norm the hard real number system $\bf R$ is extended to a Cauchy complete , but order incomplete (in the sense of Dedekind), non-archimedean field $\bf R^*$ relative to a privileged  asymptotic scale. The ordinary real number set $\bf R$ is realized as a subfield. The extended set $\bf R^*$ is isomorphic to $\bf R$ at the scale 1. However, at the level of the asymptotic scale, $\bf R^*$ is totally disconnected. The asymptotic prolongation $\bf O$ of 0 is homeomorphic to a Cantor subset of $\bf R$. The absolute value valuation $v: {\bf O}\rightarrow [0,1]$ is a Cantor's staircase function.
\end{theorem}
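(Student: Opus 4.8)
The plan is to assemble the statement from the structural facts already in hand: Lemma 1 supplies the non-archimedean ordered field $\mathbf{R}^{*}$ with $\mathbf{R}$ as a proper subfield; Lemmas 2 and 3 supply the discretely valued ultrametric valuation $v$; Lemma 4 supplies the prolongation $\mathbf{O}={\cal N}_a(0)$ as a totally disconnected, compact, perfect set; and Lemma 5 identifies $v|_{\mathbf{O}}$ with a Cantor staircase function. What genuinely remains to verify is (i) Cauchy completeness in the composite norm $\|\cdot\|$, (ii) the failure of Dedekind completeness, (iii) the scale-$1$ isomorphism with $\mathbf{R}$, and (iv) the homeomorphism type of $\mathbf{O}$.

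First I would treat Cauchy completeness. Given a $\|\cdot\|$-Cauchy sequence $\{x^{(k)}\}$ in $\mathbf{R}^{*}$, I would decompose each term through the canonical surjection onto the residual field, writing $x^{(k)}=r_k+\omega_k$ with $r_k\in\mathbf{R}$ and $\omega_k\in\mathbf{O}$ (using $x^{*}=x+\mathbf{O}$). Since $\|x^{(k)}-x^{(l)}\|=|r_k-r_l|+v(\omega_k-\omega_l)$, the two components decouple: $\{r_k\}$ is Cauchy in $(\mathbf{R},|\cdot|)$ and hence converges, while $\{\omega_k\}$ is Cauchy in the ultrametric $(\mathbf{O},v)$ and converges because $\mathbf{O}$ is compact (Lemma 4) and hence complete. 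Adding the two limits gives the limit of $\{x^{(k)}\}$ in $\mathbf{R}^{*}$, so $(\mathbf{R}^{*},\|\cdot\|)$ is a complete metric space. I expect this to be the step demanding the most care, since one must check that the additive decomposition $x^{*}=r+\omega$ respects the ring operations along the sequence and that the finite and asymptotic parts of $\|\cdot\|$ really separate; this is, however, exactly the content already sketched in the paragraph preceding the theorem.

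Next, Dedekind incompleteness follows from the classical dichotomy that any Dedekind-complete ordered field is archimedean and order-isomorphic to $\mathbf{R}$. By Lemma 1 the order $\leq_a$ makes $\mathbf{R}^{*}$ non-archimedean — for instance $0<_a nA<_a B$ for every natural $n$ whenever $A\in S_0\setminus O$ and $B\in S\setminus S_0$ — so $\mathbf{R}^{*}$ cannot have the least-upper-bound property. For the scale-$1$ statement I would invoke Remark 5: when the privileged scale is taken to be $1$, the visibility norm is trivial on $\mathbf{R}$, the prolongation $\mathbf{O}$ degenerates to the singleton $\{0\}$, and $\|\cdot\|$ reduces to $|\cdot|$; hence $\mathbf{R}^{*}$ coincides with $\mathbf{R}$ as an ordered field at this scale, consistent with the Lemma identifying both residual class fields with $\mathbf{R}$.

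Finally, for total disconnectedness and the homeomorphism type, I would argue that by Lemma 3 the valuation $v$ is discretely valued, so $v(\mathbf{O})$ is a countable subset of $[0,1]$; the induced ultrametric topology on $\mathbf{O}$ then has a countable basis of clopen balls, making $\mathbf{O}$ totally disconnected, and $\mathbf{O}$ is compact and perfect by Lemma 4. A totally disconnected, compact, perfect metric space is homeomorphic to the middle-thirds Cantor set (Brouwer's characterization), and under the standing assumption of a self-similar prolongation $\mathbf{O}$ is therefore homeomorphic to a Cantor subset of $\mathbf{R}$; since any real $x$ is extended as $x^{*}=x+\mathbf{O}$, this disconnected structure is precisely what sits in the asymptotic neighbourhood of each point, so $\mathbf{R}^{*}$ is totally disconnected at the level of the asymptotic scale while remaining a copy of $\mathbf{R}$ at scale $1$. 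The assertion that $v:\mathbf{O}\to[0,1]$ is a Cantor staircase function — monotone, continuous, surjective, with vanishing derivative off a measure-zero Cantor set — is exactly Lemma 5, which closes the argument.
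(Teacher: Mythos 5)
Your proposal is correct and follows essentially the same route as the paper, which likewise assembles the theorem from Lemmas 3--5, obtains Dedekind incompleteness from the classical fact that any order-complete ordered field is archimedean, and asserts Cauchy completeness on the grounds that every Cauchy sequence converges either in $|\cdot|$ or in $v(\cdot)$. Your explicit decomposition $x^{(k)}=r_k+\omega_k$ with the two components of the composite norm decoupling is simply a more careful rendering of that one-line completeness claim (and your attribution of the scale-$1$ triviality to Remark 5 rather than Remark 2 is a harmless citation slip).
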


The question that arises immediately at this stage is: what is the size of the extended asymptotic neighbourhood ${\cal N}_a$(0) i.e., $\bf O$? As already remarked, the nonarchimedean extension $\bf R^*$ of $\bf R$ makes room for asymptotic extension of a point $r\in \bf R$ to a measure zero set ${\bf r}\in {\bf R^*}: \ {\bf r}=r+{\bf O}$ where ${\bf O}$ is the infinitesimal, measure zero Cantor set that is induced by the duality structure and the associated asymptotic visibility norm. As a consequence, the usual linear neighbourhood $(-\delta,\delta)$ of $0\in \bf R$ gets extended asymptotically as $(-\delta^{1-v(x)}, \delta^{1-v(x)})$ where $x$ is a visible  asymptotic element $0<\delta<|x|<<1, \  x\rightarrow 0$. The following theorem gives the precise characterization.
\begin{theorem}
The duality structure supported asymptotic extension of the linear (primary/ first generation) neighbourhood $[-\delta,\delta]$ of $0\in \bf R$ to a measure zero Cantor subset $\bf O\subset R^*$  is realized  at the limit $\delta\rightarrow 0$, so that the initial interval $[-\delta,\delta]$, as a subset of $\bf R$, reduces to the singleton $\{0\}$. An associated relative invisible element $\tilde x\in [-\delta,\delta]$ is transformed, by duality transformation, into a scale invariant visible element with a scale invariant renormalized (effective) value $X={ \xi}^{-\beta(\xi^{-1})} \ (\propto 1/v(\tilde x)) $  where $  \beta(0)=0, \ \beta(\tilde\delta)=1, \ \xi>1$, which is  a slowly varying Cantor staircase function on the asymptotically realized  second generation interval $[0,\tilde\delta], \ 0<\delta<\tilde \delta<1 $. The  new rescaled variable $ \xi^{-1}$ is an asymptotic variable leaving in the secondary interval $[0,\tilde\delta]$, where 
$\tilde\delta\approx\delta\log\delta^{-1}$.
\end{theorem}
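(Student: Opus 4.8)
The plan is to assemble Theorem 2 from the structure already in place --- Lemma 4 (the prolongation ${\cal N}_a(0)=\mathbf{O}$, totally disconnected, compact, perfect), Theorem 1 ($\mathbf{O}$ homeomorphic to a Cantor set), Lemma 5 ($v$ a Cantor staircase on $\mathbf{O}$), and Remark 5 (the asymptotic forms of a dual pair) --- and then to transplant it onto a finite, scale invariant stage, the second generation interval $[0,\tilde\delta]$, $\tilde\delta=\delta\log\delta^{-1}$. The first clause is then immediate: as $\delta\to0^{+}$ the window $(-\delta,\delta)$, read inside $\mathbf{R}$, collapses to $\{0\}$, while Lemma 4 and Theorem 1 say that in $\mathbf{R}^{*}$ it is replaced by the measure zero Cantor set $\mathbf{O}$. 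So the work lies in the second and third clauses.

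First I would fix an invisible element $\tilde x\in[-\delta,\delta]$. By the symmetry $x\mapsto-x$ of the visibility norm it suffices to take $0<\tilde x<\delta$, and by Remark 6 a nontrivial such element obeys $\tilde x\le\delta^{2}$, i.e. $\tilde x=\delta\,\delta^{\,v(\tilde x)}(1+o(1))$ with $v(\tilde x)>1$ (Remark 5(ii)). I would then apply the duality transformation of Definition 2, sending $\tilde x$ to its visible partner $x$ fixed by $\delta/\tilde x\propto x/\delta$ together with the exponential inversion (4a), $v(\tilde x)\,v(x)=\tilde\mu$, $v(x)<1<v(\tilde x)$. By Remark 5(i) this visible element is $x=\delta\,\delta^{-v(x)}(1+o(1))$, it lies in the visible right neighbourhood ${\cal N}_a^{+}(0)$, and on that neighbourhood Lemma 5 says $v$ is a Cantor staircase onto $[0,1]$.

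Next I would perform the rescaling that trades the $\delta$-dependent picture for a scale invariant one. The visibility norm is a logarithm, $v(x)=\log_{\delta^{-1}}(x/\delta)$, so the displacement of a visible $x$ off the scale is $x-\delta=\delta(\delta^{-v(x)}-1)=\delta\,(e^{\,v(x)\log\delta^{-1}}-1)$, whose natural unit is $\delta\log\delta^{-1}=\tilde\delta$ (note $0<\delta<\tilde\delta<1$ for $\delta$ small). I would define the new, scale invariant asymptotic variable $\xi^{-1}\in[0,\tilde\delta]$ ($\xi>1$) to be the renormalized value of this displacement, so that $\xi^{-1}\to0$ precisely as $x\to\delta$. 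Transporting $v$ through this coordinate change gives a function $\beta(\cdot)$ on $[0,\tilde\delta]$; since the coordinate change is a homeomorphism carrying the clopen balls (gaps) of ${\cal N}_a^{+}(0)$ to gaps and the Cantor set to the Cantor set, $\beta$ inherits the Cantor staircase property of Lemma 5, with $\beta(0)=v(\delta)=0$ (Remark 6) and $\beta(\tilde\delta)=1$ at the top of the visible range. The renormalized effective value is then $X=\xi^{-\beta(\xi^{-1})}$; it is scale invariant because the residual $\delta$-dependence in $v$, hence in $\beta$, enters only through terms of type $\log c/\log\delta^{-1}$ which vanish as $\delta\to0$ --- the very estimate used in the proofs of Lemma 3 and Lemma 5 and in Example 1 --- and it satisfies $X\propto 1/v(\tilde x)$ via the duality relation $v(\tilde x)=\tilde\mu/v(x)$ together with the identification of $v(x)$ with $\beta(\xi^{-1})$. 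The stated slow (logarithmic) variation of $X$ on $[0,\tilde\delta]$ is the same vanishing-logarithmic-correction estimate applied locally near a point of the Cantor set.

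The step I expect to be the main obstacle is the third one, and within it the choice of normalization. One must make the logarithmic bookkeeping in $v(x)=\log_{\delta^{-1}}(x/\delta)$ tight enough that simultaneously (i) the renormalized displacement really lands in an interval of width $\delta\log\delta^{-1}$, rather than one of width comparable to $\delta$ or to $1$; (ii) $X=\xi^{-\beta(\xi^{-1})}$ comes out genuinely $\delta$-independent in the limit, so the $o(1)$ corrections drop instead of accumulating; and (iii) the duality-imposed value $v(\tilde x)=\tilde\mu/v(x)$ --- as opposed to the naive inversion-invariant value $v(x)$ itself --- is consistent with $\tilde\mu\sim O(1)$ uniformly in the relevant range. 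This last point is precisely the quadratic constraint $\tilde\mu/v(x)=v(x)+\mu$ recorded in Remark 5, and checking that it admits an $O(1)$ solution (so that the gluing $X\propto 1/v(\tilde x)$ is legitimate) is where the real content of the theorem sits.
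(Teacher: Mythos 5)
Your proposal follows essentially the same route as the paper's own proof: the first clause is delegated to Lemma 4/Theorem 1, the representation $x=\delta\cdot\delta^{-v(x)}(1+o(1))$ is used to set $X=\log (x/\delta)=\xi^{-\beta(\xi^{-1})}$, the Cantor-staircase property of $\beta$ is inherited from that of $v$ (Lemma 5), and $\tilde\delta=\delta\log\delta^{-1}$ is read off from the range of the logarithmic displacement (the paper phrases this as $v(\delta\cdot\delta^{-\delta})=\delta$, hence $\sup X=\delta\log\delta^{-1}$). The only organizational difference is that the paper channels the slow-variation and normalization control through Lemmas 7 and 8 rather than through the quadratic duality constraint you single out, but the substance is the same.
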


We begin by first characterizing the structure of an asymptotic element.

\begin{definition}
Given a pair of relative visible and invisible asymptotic elements $0<\tilde x<\delta<x$, the duality structure $\tilde x/\delta\propto \delta/x$ and $v(\tilde x)\propto 1/v(x)$ induces a scale invariant renormalization group transformation of the relative invisible infinitesimal $\tilde x$ to the scale invariant visible element $X$ by $X=v(x)\log \delta^{-1}(1+o(1))=\log x/\delta \ (1+o(1))$ (c.f. Remark 2.6).
\end{definition}
The transformation concerned is a renormalization transformation since this awards vanishing small elements finite, nonvanishing values \cite{baren,golden}. Such transformations obviously form a group, called the {\em asymptotic renormalization group}, that acts nontrivially on the duality enhanced neighbourhood $\bf O$ of 0.

\begin{definition}
Let $0<\delta<\tilde \delta(\delta)<1$. The asymptotic scale defined by $\delta$ is called the primary or the first generation scale. The scale defined by any other $\tilde\delta$ respecting the above inequality is then a secondary or a higher generation scale in the sense that the scale invariant renormalized quantities such as $X$ can not be an explicit function of the first generation scale $\delta$, but may depend implicitly through higher generation scales of the form $\tilde\delta$.

Further, by the {\em asymptotic limits} of the scaling variable $\xi \  (=x/\delta)$ one means limits either of the forms $\xi\rightarrow \infty$ (i.e. $\delta\rightarrow 0$, $x$ fixed) or $\xi\rightarrow 0$ (i.e. $x\rightarrow 0$, $\delta$ fixed). On the other hand, {\em the scaling limit} $\xi\rightarrow 1$ is the so-called singular limit $\delta\rightarrow 0, \ x\rightarrow 0$ so that $x/\delta$ is finite.
\end{definition}

As it will transpire,  the scale of the form $\delta\log\delta^{-1}$ is a second generation scale relative to the first generation scale $\delta$.

\begin{lemma}
1. The invisible asymptotic element $\tilde x: \  0<\tilde x<\delta<x, \ x \rightarrow 0$ is mapped, by duality induced renormalization group transformation, to a smooth, infinitely large scaling function $\tilde x(x,\delta)/\delta \mapsto \tilde X=\tilde X(\tilde\xi)>1$ of the scaling variable $\tilde\xi=\xi^{\alpha(\xi)}, \ \xi=\delta^{-1} x>1$, where $\alpha(\xi)= \sum_m^N \alpha_i|\log\xi|^{ki}, \ \alpha_i\geq 0, \ k\geq 1$, is a  truncated Taylor expansion in the integral powers of $\log \xi$, $m=0, 1,2,\ldots$, in the asymptotic limits $\xi\rightarrow 0 \ {\rm or} \ \infty $. Further, $\alpha_0>1$.

2. For each such $\tilde x$, the scale dependent visible element $x$  then has the limiting scale invariant renormalized group representation  $x/\delta\mapsto  X= \log x/\delta \ (1+o(1))=\xi^{-\beta(\xi^{-1})}(1+o(1))$ where $\beta(\xi^{-1})$ has the leading order expansion $\beta(\xi^{-1})=\frac{\beta_0}{|\log\xi|^{b_0}}(1-\frac{\beta_1}{|\log\xi|^{b_1}}(1+o(1)))$ where $b_0$ and $b_1$ are either 0 and $1/N$ or $1/N$ and $1/(N-1)$ respectively, and $\beta_0<1$. In the limit $\xi\rightarrow 1$, however, both $\alpha(\xi)=\beta(\xi^{-1})\rightarrow 1$. The scale invariant quantities respects the duality $X\propto {\tilde X}^{-1}.$
\end{lemma}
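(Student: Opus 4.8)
The plan is to read everything off the duality relations already recorded in Definition 2, Remark 3, Remark 6 and Definition 7, treating the asserted expansions as the output of an asymptotic matching rather than an a priori ansatz. First I would fix the pair $0<\tilde x<\delta<x$ with $x\to0$, write the basic inversion law as $\tilde x\,x=\lambda(\delta)\delta^{2}$ with $\lambda=\delta^{\mu}$, $\mu\sim O(1)$ (Remark 6, c.f. Remark 3(2)), and introduce the scale ratio $\xi=\delta^{-1}x>1$. A direct computation with the visibility norm (\ref{vnorm2}) gives $v(x)=\log_{\delta^{-1}}\xi$ and, from $\delta/\tilde x=\lambda^{-1}\xi$, the relation $v(\tilde x)=v(x)+\mu$; combining this with the exponential inversion $v(x)v(\tilde x)=\tilde\mu$ of (\ref{dual}) yields the quadratic constraint $v(x)^{2}+\mu v(x)-\tilde\mu=0$ of Remark 6. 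Since $\mu,\tilde\mu$ are positive and $O(1)$, the admissible root satisfies $0<v(x)<1<v(\tilde x)$; in the limit $\delta\to0$ the constant $v(\tilde x)$ will be identified with the constant term $\alpha_{0}$ of the exponent function constructed below, which is the origin of the inequality $\alpha_{0}>1$. By Definition 7 the visible element then renormalizes to $X=v(x)\log\delta^{-1}(1+o(1))=\log(x/\delta)(1+o(1))$, which fixes the first of the two displayed representations of $X$.

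Next I would construct the invisible side. Because $\tilde x/\delta=\lambda\xi^{-1}=\delta^{\mu}\xi^{-1}$ still carries the first–generation scale $\delta$ explicitly, scale invariance of the renormalized quantity forces the $\delta$–dependence to be reabsorbed into a rescaling of $\xi$, and the only rescaling compatible with the multiplicative/ultrametric structure of the valuation (Lemma 2, Lemma 3) is a power rescaling $\xi\mapsto\tilde\xi=\xi^{\alpha(\xi)}$ for an exponent function $\alpha$ still to be determined. I would then observe that in the asymptotic regimes $\xi\to\infty$ — equivalently, by inversion invariance of $v$, $\xi\to0$ — every quantity built from the valuation depends on the scales only through $\log\delta^{-1}$, hence through $|\log\xi|$; therefore smoothness (infinite differentiability) of the renormalized scaling function $\tilde X=\tilde X(\tilde\xi)$ is equivalent to analyticity of $\alpha$ in the slow variable $|\log\xi|$, and truncating this expansion at the $N$-th generation of scales (Definition 8) gives exactly $\alpha(\xi)=\sum_{i=m}^{N}\alpha_{i}|\log\xi|^{i}$ with leading coefficient $\alpha_{m}\ge0$ and constant term $\alpha_{0}=\lim_{\delta\to0}v(\tilde x)>1$ by the previous step. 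Smoothness of $\tilde X$ and $\alpha_{0}>1$ together deliver the claimed infinitely large, smooth scaling function $\tilde x/\delta\mapsto\tilde X(\tilde\xi)>1$.

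The core of the argument is to extract $\beta$ and the two admissible pairs $(b_{0},b_{1})$. Here I would impose the duality $X\propto\tilde X^{-1}$ in exponentiated form: writing $X=\xi^{-\beta(\xi^{-1})}(1+o(1))$ and, symmetrically, $\tilde X=\tilde\xi^{\,\gamma(\tilde\xi)}(1+o(1))=\xi^{\alpha(\xi)\gamma(\tilde\xi)}(1+o(1))$, the relation $X\tilde X=O(1)$ forces $\beta(\xi^{-1})=\alpha(\xi)\gamma(\tilde\xi)(1+o(1))$, so $\beta$ inherits its expansion from that of $\alpha$ after the substitution $\tilde\xi=\xi^{\alpha(\xi)}$. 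Inverting $\tilde\xi=\xi^{\alpha(\xi)}$ gives $\log\tilde\xi=\alpha_{N}|\log\xi|^{N+1}(1+o(1))$ when the top power dominates, hence $|\log\xi|=(\alpha_{N}^{-1}\log\tilde\xi)^{1/(N+1)}(1+o(1))$; feeding this back and matching against the first representation $X=\log(x/\delta)(1+o(1))$ — linearize by taking iterated logarithms of $\log\xi=\xi^{-\beta(\xi^{-1})}(1+o(1))$ — produces the leading term $\beta_{0}/|\log\xi|^{b_{0}}$ together with the first correction $-\beta_{1}/|\log\xi|^{b_{1}}$, with $\beta_{0}<1$ coming directly from $v(x)<1$. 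The dichotomy $(b_{0},b_{1})\in\{(0,1/N),(1/N,1/(N-1))\}$ then records the two combinatorial possibilities for which generation of $\alpha$ controls the inversion: either the constant term $\alpha_{0}$ dominates (so $\beta$ has a genuine leading constant, $b_{0}=0$, and the first correction is set through the inversion by the highest surviving power $|\log\xi|^{N}$, giving $b_{1}=1/N$), or the top term $\alpha_{N}$ dominates (so $b_{0}=1/N$ and the next correction is $1/(N-1)$); I would carry both cases through. This case analysis, together with the need to track the composition $\xi\mapsto\tilde\xi\mapsto\tilde X$ and the iterated logarithms precisely enough to isolate the first subleading coefficient, is the step I expect to be the main obstacle; the truncation of the expansion of $\alpha$ is a softer point, being essentially the content of the generation hierarchy of Definition 8 rather than a derived fact, and I would present it as such.

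Finally I would verify the boundary normalizations. As $\xi\to1^{+}$ we have $|\log\xi|\to0$, the rescaling $\tilde\xi=\xi^{\alpha(\xi)}$ degenerates to the identity (the visible/invisible split $x\gtrless\delta$ collapses as $x,\tilde x\to\delta$), so the effective exponents must renormalize to $1$ in that limit; this forces $\alpha(\xi)\to1$ and, via the inherited relation $\beta(\xi^{-1})=\alpha(\xi)\gamma(\tilde\xi)(1+o(1))$, also $\beta(\xi^{-1})\to1$, consistent with $\beta(\tilde\delta)=1$ of Theorem 2. The bound $\tilde X>1$ follows from $\alpha_{0}=\lim_{\delta\to0}v(\tilde x)>1$ and monotonicity of $\tilde X(\tilde\xi)$, and the scale invariant duality $X\propto\tilde X^{-1}$ is simply the exponentiated form of the duality (\ref{dual}) on the visibility norms.
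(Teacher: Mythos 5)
Your proposal follows essentially the same heuristic route as the paper's own argument: both rest on (i) the inversion invariance of the visibility norm identifying $(0,\delta)$ with $(\delta^{-1},\infty)$, (ii) the requirement that a scale invariant renormalized quantity can depend on $\delta$ and $x$ only through $|\log\xi|$, which is then used to justify the logarithmic‐power ansatz for $\alpha$ and $\beta$, and (iii) the exponentiated duality $X\propto\tilde X^{-1}$ to tie the visible and invisible representations together. The differences are organizational rather than substantive: the paper enters via the observation that in ordinary calculus $\tilde x$ could only be a power series in $\delta$, and then lets inversion invariance ($v(\tilde x)=v(\tilde x^{-1})$) convert this into $\tilde x/\delta\mapsto\delta e^{\tilde X(\tilde\xi)}$ with $\log(\tilde x/\delta)^{-1}=\sum a_i\xi^{i\alpha(\xi)}$, whereas you enter via the quadratic constraint of Remark 6 to pin down $0<v(x)<1<v(\tilde x)$ and to identify $\alpha_0$ with $\lim v(\tilde x)$; both are consistent with the paper's appeal to Definition 2 for $\alpha_0>1$, and your version has the small advantage of making the source of that inequality explicit. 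One caution: the part of your argument that promises more than the paper delivers --- the derivation of the dichotomy $(b_0,b_1)\in\{(0,1/N),(1/N,1/(N-1))\}$ --- does not actually close. Your own inversion of $\tilde\xi=\xi^{\alpha(\xi)}$ yields $|\log\xi|\sim(\log\tilde\xi)^{1/(N+1)}$, which does not match the exponent $1/N$ you then assert, and the subsequent ``matching'' and case analysis are only gestured at. Since the paper's proof never addresses $b_0,b_1$ at all (the lemma simply asserts them), this does not put you behind the paper, but you should either complete that computation carefully or present the values of $b_0,b_1$ as part of the ansatz, as the paper implicitly does, rather than as a derived consequence.
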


\begin{proof}
In ordinary  calculus on real line $\bf R$, an element of the form $0<\tilde x<\delta$, for a given $\delta$  could either be a smooth function of the form $\tilde x(\delta)=\sum_m^{\infty} a_i\delta^i, \ m>0 $ or at most of the form $\tilde x(\delta)=\sum_m^{\infty} a_i\delta^ki, \ m>0, \ k>1 $ (the point $\tilde x$ then is the equivalence class of the convergent power series either of $\delta$ or $\delta^k$). In the presence of duality structure, the intervals $(0,\delta)$ and $(\delta^{-1},\infty)$ are identified by the inversion invariant visibility norm: $\tilde x/\delta=\delta^{v(\tilde x)}\mapsto  \delta/\tilde x=\delta^{-v(\tilde x^{-1})}$, so that $v(\tilde x)=v(\tilde x^{-1})$ (c.f. remarks following Definition 2.2). As a consequence, we now have, instead, $\tilde x(x,\delta)\mapsto\delta\times e^{\tilde X (\tilde \xi)}$ that is extended naturally as a smooth function of the power law scaling variable $\tilde \xi$ with above stated properties. Notice that, even as $\tilde x$ is only a relative infinitesimal depending on the scale $\delta$, the duality induced  scale invariant quantity $\underset{\delta\rightarrow 0}\lim \ \log (\tilde x/\delta)^{-1}$, that exists nontrivially, must be a  function of $|\log\xi|$ of the scale invariant $\xi>1$ i.e $\underset{\delta\rightarrow 0}\lim \ \log (\tilde x/\delta)^{-1}=\tilde X={\tilde X(\tilde \xi)}>1$.   By duality, the exponent $\alpha(\xi)>1$ (Definition 2.2) and hence the limiting scaling function is also $\tilde X (\tilde \xi)>1$, so that the original infinitesimally small scale dependent asymptotic element $\tilde x<<1$ is realized as a scale invariant infinitely large, invisible element $\tilde X (\tilde \xi)>>1$ by inversion induced renormalization transformation $\tilde x/\delta\mapsto \log (\tilde x/\delta)^{-1}=\sum_N^m a_i\xi^{i\alpha(\xi)}$ in the limit $\delta\rightarrow 0$, in the logarithmic scale.

The representation for the asymptotically realized visible element $x>0$, on the other hand, is such as to respect the duality structure at least at the leading order, so that the original scale dependent visible element $0<\delta<x<<1$ goes over to the asymptotically renormalized quantity, in association with the inversion in the exponentiated quantities in the duality structure, of the form $x/\delta\mapsto X=\log x/\delta=\xi^{-\beta(\xi^{-1})}(1+o(1))$ since $\beta(\xi^{-1})<1, \ \xi^{-1}<1$, $\delta\rightarrow 0$. Further, the new scale invariant quantities are such as to satisfy the duality $X\propto {\tilde X}^{-1}$. The power law $\xi^{-\beta(\xi^{-1})}$ for an almost constant exponential valuation  $\beta(\xi^{-1})$ is consistent with the fact that the visibility norm is a discretely valued ultrametric
\end{proof}

To emphasize, the scaling exponent $\alpha ( \xi)$ could only be a power (or a generalized)  series  in $\log{\xi}$, instead of $\delta$ or $\xi$, as the effect of said duality should be felt only either of the asymptotic or singular limits (Def. 2.9) already evaluated, but $\log\xi$ being still {\em finite and smooth}, being  away from any singularity. In fact, the logarithmic singularities at $\xi=0$ or $\infty$ are essentially eliminated (smoothed) in the context of duality structure.  The series expansion for $\alpha(\xi)$ (in the light of  Def.2 8)  is such as to make the (duality enhanced) vanishing of invisible elements $\tilde x$ much faster.

In the asymptotic limit $\delta\rightarrow 0$, the visibility (renormalized) effective value of the visible element $x$ now has the explicit, scale  invariant representation $v(x)\log \delta^{-1}=\xi^{-\beta(\xi^{-1})}(1+o(1))<0, \ \xi>1$, so that for asymptotic but,nevertheless, finite fixed values of $x$ and $\delta$, one gets $x/\delta=\delta^{-v(x)}(1+o(1))=e^{\xi^{-\beta^{(\xi^{-1})}}}(1+o(1))$. From Lemma 2.4 \& 2.5, $v(x)$ has constant value $v(x)=e^{-\beta_0}$ (i.e, $\beta(\xi^{-1})=-\frac{\beta_0}{\log\xi}$) on the closure $I_i$ of an open (right-)cover of $\bf O^+$. Variability of $v(x)$ is noticeable at a point $x_0$ that is an interior point of a closed interval of the form $\delta^\prime [x_1,x_2]$ that is immediate adjacent to $I_i$ and $\delta^\prime$ the scale associated with the iterative definition of the Cantor set representing $\bf O^+$. Define $\xi^\prime=|x-x_0|/\delta^\prime$.  In the asymptotic limit $\xi^\prime\rightarrow 0  $ (or, equivalently, in $\xi^\prime\rightarrow \infty  $ ), the constant value $e^{-\beta_0}$ of $v(x)$ would experience a slow variation in inverse powers of $|\log\xi^\prime|$ as represented in $\beta(\xi^{-1})$ in the above lemma. This inverse power variation is inherited from the duality structure and is consistent with the concomitant variation in the invisible sector. The constant value $\beta_0$, then, is attained by $\beta(\xi^{-1})$ when $b_0\rightarrow 0$, and  $ \ \beta_1=0$. The small allowed value of the exponent $b_0$ now tells that the suggested variation of $v(x)$ indeed is very slow i.e. of the form $v(x)\sim \frac{\beta_0}{|\log\log\xi|}(1+o(1))$ since $|\log\xi|^{b_0}\sim |\log\log\xi|$ for $b_0\approx 0$.
On the other hand, in the singular asymptotic limit $\xi\rightarrow 1$, the boundary value $v(x)\rightarrow 1$    is attained if $b_0\rightarrow 0, \ \beta_0=1$,  and $ \ \beta_1=0$ (c.f.Def. 2.8).    We thus have

\begin{lemma}
The staircase function $v(x)$ varies slowly with the asymptotic scaling limits $\xi\rightarrow 0 \ {\rm or} \ \infty $ in the neighbourhood of a constant value. More precisely, $v(x)$ has the asymptotic expansion
\begin{equation}\label{valuasymp}
\log (v(x)\log \delta^{-1})= \frac{\beta_0}{|\log\xi|^{b_0}}(1-\frac{\beta_1}{|\log\xi|^{b_1}}(1+o(1)))\log\xi, \ \beta_0\le 1, \ 0\le b_0<b_1<1
\end{equation}
in the vicinity of either of a point of scale variation, or as $x\rightarrow 0$ for fixed $\delta$ and $0<\xi^{-1}<1$. For $0<b_0<<1$, one, however, gets $\log v(x)= \frac{\beta_0\log \xi}{|\log\log\xi|}$
\end{lemma}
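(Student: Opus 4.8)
The plan is to assemble the expansion from the renormalized representation of the preceding Lemma together with the asymptotic analysis already carried out in the paragraph between that Lemma's proof and the present statement; Lemmas 4 and 5 enter only to pin down the regime in which the claimed variation is supposed to live. The staircase property tells us $v$ is locally constant on each clopen ball of the cover of $\mathbf O^+$, so the nontrivial content of the statement is the behaviour of $v$ as $x$ approaches a point of scale variation (or tends to $0$ for fixed $\delta$), governed by the power-law scaling variable $\xi=\delta^{-1}x$.

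First I would recall from the preceding Lemma that, in the asymptotic limit $\delta\to 0$ (equivalently $x\to 0$ for fixed $\delta$, or $\xi\to 0,\infty$), the renormalized effective value admits the scale-invariant representation
\[
v(x)\log\delta^{-1}=\xi^{-\beta(\xi^{-1})}\bigl(1+o(1)\bigr),\qquad
\beta(\xi^{-1})=\frac{\beta_0}{|\log\xi|^{b_0}}\Bigl(1-\frac{\beta_1}{|\log\xi|^{b_1}}\bigl(1+o(1)\bigr)\Bigr),
\]
with $\beta_0\le 1$ and $0\le b_0<b_1<1$. Taking logarithms, the multiplicative $1+o(1)$ becomes an additive $o(1)$ that is subdominant relative to the leading term of order $|\log\xi|^{1-b_0}$, so that $\log\bigl(v(x)\log\delta^{-1}\bigr)=-\beta(\xi^{-1})\log\xi+o(1)$; inserting the displayed expansion of $\beta(\xi^{-1})$ then gives exactly (\ref{valuasymp}), the overall sign (and the appearance of $|\log\xi|$ versus $\log\xi$) being dictated by whether the limit is $\xi\to 0$ or $\xi\to\infty$. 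To confirm consistency with Lemmas 4 and 5, I would observe that on a clopen ball the expansion must degenerate to its leading constant: this is the case $b_0=0,\ \beta_1=0$, giving $\beta(\xi^{-1})=-\beta_0/\log\xi$ and hence a constant value of $v$ there, while a nonzero $b_0$ (and the $\beta_1$ correction) encodes, through the next-generation rescaling $\xi'=|x-x_0|/\delta'$, the slow logarithmic drift of this constant into the value carried by an adjacent $(i+1)$-th generation interval — a drift inherited coherently from the concomitant variation in the invisible sector, as in the preceding Lemma.

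Finally I would treat the degenerate regime $0<b_0\ll 1$. Since setting $b_0=0$ outright annihilates the variation, one keeps $b_0$ positive but infinitesimal and notes that over the admissible range of $\xi$ the factor $|\log\xi|^{b_0}$ is indistinguishable at leading order from the still more slowly growing $|\log\log\xi|$ — precisely the passage to an iterated-logarithmic scale anticipated in the Remark on the series representation of the scaling exponent, where the coefficients and exponents themselves are allowed to vary at $\log\log$ rates. Discarding the now higher-order $\beta_1$ term, (\ref{valuasymp}) collapses to $\log\bigl(v(x)\log\delta^{-1}\bigr)=\beta_0\log\xi/|\log\log\xi|$, and absorbing the doubly subdominant $\log\log\delta^{-1}$ into the remainder yields the stated $\log v(x)=\beta_0\log\xi/|\log\log\xi|$.

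The step I expect to be the real obstacle is this last one: the replacement of $|\log\xi|^{b_0}$ by $|\log\log\xi|$ is not an identity but a change of asymptotic gauge, so one must commit to the interpretation that the expansion parameters live on a hierarchy of logarithmic scales and then verify that the $o(1)$ remainders of the preceding Lemma remain negligible after this reparametrisation, and that the resulting expansion holds uniformly in $x$ over a whole neighbourhood of the point of scale variation rather than merely pointwise. Reconciling the manifestly smooth form of (\ref{valuasymp}) with the fact (Lemma 5) that $v$ is nowhere differentiable on $\mathbf O$ is the conceptual face of the same difficulty, and is resolved by reading (\ref{valuasymp}) as an expansion of the renormalized (rescaled) description near a point of variation, not of $v$ as a function on all of $\mathbf O$.
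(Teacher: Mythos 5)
Your proposal is correct and follows essentially the same route as the paper: the paper gives no separate proof environment for this lemma, deriving it instead in the paragraph immediately preceding the statement by substituting the representation $v(x)\log\delta^{-1}=\xi^{-\beta(\xi^{-1})}(1+o(1))$ and the expansion of $\beta(\xi^{-1})$ from Lemma 7, identifying the locally constant value $\beta_0$ on the clopen balls of Lemmas 4--5, locating the variation at a point of scale change via $\xi'=|x-x_0|/\delta'$, and invoking $|\log\xi|^{b_0}\sim|\log\log\xi|$ for $b_0\approx 0$ --- exactly your steps. Your closing caveats about the sign ambiguity, the ``change of asymptotic gauge'' in the last step, and the reconciliation with non-differentiability are fair observations about looseness already present in the paper's own argument rather than gaps in yours.
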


\begin{proof}( Theorem 2.2:)
The set $\bf O$ is of measure zero since it contains no line segments of length $\delta^n, \ n$, a natural number, in the limit $\delta\rightarrow 0$. By Theorem 2.1, it is a Cantor subset of $\bf R$. To prove the remaining part, let us consider $0<\delta<x<<1$ such that $x$ is visible asymptotic and $\delta$ is the associated primary scale. $x$ thus has the representation $\delta\times \delta^{-v(x)}(1+o(1))=\delta\times e^{\xi^{-\beta(\xi^{-1})}}(1+o(1))$. The renormalized effective variable thus has the form $X=\underset{\delta\rightarrow 0}{\lim}\log x/\delta= \xi^{-\beta(\xi^{-1})}$.  By Theorem 2.1, the valuation $v(x)$ is a Cantor function such that $v(\delta)=0, \ v(1)=1$. The scaling exponent $\beta(\xi^{-1})$,  in the asymptotic limiting variable  $ \xi^{-1}\in [0,\tilde \delta], \  0<\delta<\tilde\delta<1 $  also has the property of a Cantor function that is naturally inherited from that of $v( x)$, when $ x$ is assumed to live in $[\delta, \delta\times \delta^{-\delta}]\subset [\delta,1]$, so that both $\xi$ and $X$ are asymptotic. Consequently, $\inf X=0$ and $\sup X=\tilde\delta$ as $\beta(\xi^{-1})$ varies from 0 to 1 for $\xi^{-1}$ in $[0,\tilde\delta]$.  Noting that $v(\delta\times \delta^{-\delta})\approx {\frac{\log \delta^{-\delta}}{\log\delta^{-1}} }=\delta$, one must than have $\tilde\delta=\delta\log \delta^{-1}$.

As a consequence, the nontrivial secondary  asymptotic interval $[0,\tilde\delta]$ as the range space of $X$ here is  realized in the asymptotic limit $\delta\rightarrow 0$, as $x$ happens to vary slowly in the primary right neighbourhood of $\delta$ of approximate size $\tilde\delta$ of the form $x\in \delta(1,1+\tilde\delta)$, so that $\log (x/\delta)$ i.e. $ X$ belongs to the approximate interval $(0,\tilde\delta)$. The validity of the above constraints  require that the variations $\beta_0(\xi)\rightarrow 0$, $\beta_0(\xi)\rightarrow 1$ etc.  must be very slow compared to the concomitant variations of $\delta\rightarrow 0$ and $x\rightarrow 0$.  Lemma 8 estimates the exact behaviour of such variations, that are supported precisely by the duality structure. As a result, the effective renormalized variable $X$  exists with nontrivial variability in asymptotically extended (secondary) set ${\bf O^+ }\subset [0, \tilde\delta]$ and itself is a Cantor function, so that $X$, and hence, $\beta(\xi^{-1})\log \xi$, is almost constant in a gap of the said Cantor set. Notice that even as $\delta$ is assumed to be vanishing, all the associated variables viz., $X,\ \log x$ and $\log \delta$ exists and nonvanishing in the present asymptotic sense, proving the theorem.
\end{proof}

\begin{remark}
{\rm 1. The effective asymptotic variable $X=\xi^{-\beta(\xi)}$ is a renormalized quantity when the primary limit $\delta\rightarrow 0$ is evaluated even as the (visible) real variable $x: 0<\delta<x$ continues to approach 0 at a slower rate because of a nontrivial influence of duality structure. In the absence of duality, one expects, in the framework of ordinary calculus, that the above concomitant limit to yield a constant value $X=k$ when both $x, \ \delta\rightarrow 0$ linearly. In the present approach, one, however, realizes a nontrivial scaling function $X$ in the {\em new, dynamically generated } rescaling symmetric scaling variable $\xi$. The effective variable leaving in the secondary asymptotic interval $X\in [0,\tilde\delta]$ has the important property of having intermittent  behaviour, viz,   it remains almost constant over  linear segments of $[0,\tilde\delta]$ with significant O(1) slow variations only over a measure zero Cantor subset. In the subsequent section, we examine the differentiable structure of the asymptotically realized extended sets and offer reinterpretations of the above observations in the context of  linear differential equations.

2. It is pointed out in Remark 2.5 that extended asymptotic set $\bf O$ could be much more complicated multifractal set. Theorem 2.2 gives an idea how this is actually realized.  Initiated by a  privileged scale $\delta$, the singleton set $\{0\}$ gets  prolonged to the first generation Cantor set $C_{\delta}$ that arises as a subset of a second generation asymptotic interval $[0,\tilde\delta]$ equipped with an effective renormalized variable $X_{\delta}$. The asymptotic neighbourhood of $0\in C_{\delta}$ then experience a further prolongation into the second generation Cantor set $C_{\tilde\delta}$ corresponding to the second generation scale $\tilde\delta=\tilde\delta(\delta)$ and so on to ad infinitum. The original singleton $\{0\}$, as a consequence, is prolongated into a multifractal set ${\bf O}_{\delta}$ equipped with a limiting multifractal measure $X(\delta)$ extending the first generation Cantor's singular measure $X_{\delta}$. More details of this construction would be considered elsewhere (see e.g. \cite{dpnw}).

3. The  renormalization group formalism essentially eliminates trivial or divergent quantities in favour of finite observable values \cite{baren,golden}. The present asymptotic analysis exactly realizes this transformation even in the context of ordinary real analysis. We call such duality induced renormalization transformation as the {\em asymptotic renormalization group transformation}. The new finite, scale invariant asymptotic quantities would offer novel insights and reinterpretations  of divergent phenomena and processes in mathematics and its various applications. Here, we treat some simple applications of linear differential equations and  differentiability on fractal sets.  }
\end{remark}

\subsection{Measure}

In ordinary analysis the Lebesgue measure of the interval $(0,x)$ is $l((0,x))=x$ that vanishes in the limit $x\rightarrow 0$. The behaviour of cardinality measure is not that trivial: $\#((0,x))=c$, the continuum, for all nonzero $x$, but in the said limit, the cardinality vanishes suddenly, $\#((0,x))\rightarrow \#(\Phi)=0$. One may raise a pertinent question here about the ultimate fate of that continuum of elements (i.e. real numbers) all evaporating, as it were, into nothingness! It might appear reasonable, however, to imagine that the points interior to   that line segment get so  compressed under  spontaneous contraction force developed spontaneously as $x$ continues to diminish that the initial line segment is ultimately transformed into a highly irregular continuous structure (curve) that remains glued to the point 0, though along (an invisible) vertical direction. The Lebesgue measure of that continually irregular contracted set would then grow indefinitely, that was encoded in the asymptotic divergence in the scale invariant invariant quantity $\tilde X$ (Lemma 2.7). However, the duality structure now eliminates this apparent divergence and instead realizes spontaneous formation of a novel zero Lebesgue measure, but nevertheless, a finite Hausdorff measure, represented by the effective scale invariant quantity $X=\xi^{-\beta(\xi^{-1})}$, of a totally disconnected subset  $[0,\xi^{-1}]\cap{\bf O}, \ \xi^{-1}<1$. This gives a measure theoretic justification of the terminology introduced here, viz, the set $\bf R^*$ is a soft (fluid) model of real numbers when $\bf R$ being the hard (or stiff string) model. The pathological behaviour of the limiting cardinality measure is also avoided, the continuity of the measure remains intact even asymptotically in the soft model.

The above spontaneous contraction argument has an alternative explanation, that appears to be valid even in the framework of the hard model.
The equality $x=x$ could be violated asymptotically by introducing a spontaneous scaling of the  form $x\mapsto x\times a(x)^{\log x^{-1}}, \ x$ asymptotic for a small scaling factor $a(x)<1$. This reduction of measure, however, could be halted by invoking a sufficiently large factor $p(x)^{\log \tilde X}, \ a<p(x)^{-1}<1$ for a growing scaling function $\tilde X>1$, so that the linear measure is preserved asymptotically $x=x\times a(x)^{\log x^{-1}} \times p(x)^{\log \tilde X}$. This conservation principle, however, yields a diverging measure $\tilde X=x^{-s(x)}$
where $s(x)=\frac{\log a^{-1}}{\log p}>1$ even as $X$ vanishes in the limit $x\rightarrow 0$, that remains hidden in the conventional treatment of real analysis. In the present extended formalism, a finite non-linear measure is, finally, retrieved asymptotically  by duality: $\tilde X\mapsto \tilde X^{-1}\propto X \propto x^{s^{-1}(x)}, \ s^{-1}<1$ with support $\bf O$ (c.f. Theorem 2.1).

Let us now recall  the following standard results from  the theory of Hausdorff measure \cite{roger}:

(a) A Cantor function $F(x)$, being monotonic, non-decreasing and $F(0)=0$ defines a unique Borel $\sigma-$finite regular measure $\mu_F$ given by  $\mu_F(A)=\mu^h(A), \ \mu^h(A)=H^{s_c}(A)$ for any Borel set $A\subset C\subset [0,1]$ where $s_c$ being the Hausdorff dimension of the associated Cantor set $C$  and $H^{s_c}(A)$ the corresponding finite Hausdorff measure. More importantly, $F(x)=\mu_F([0,x])$.

(b) Recall also that a Cantor function $F(x)$  is continuous but not an absolutely continuous function. The associated induced measure $\mu_F$ is therefore singular, not being absolutely continuous relative to the linear Lebesgue measure. However, $\mu_F$ is, nevertheless, trivially absolutely  continuous with respect to itself and hence relative to the Hausdorff $s_c$-measure $\mu^h$.

(c) As a consequence, an application of the Radon-Nikodym theorem on differentiability of measures tells that $\frac{d\mu_F}{d\mu^h}=1$ modulo a null $\mu^h$ set, which then translates immediately as the functional differentiability of a monotonic non-decreasing function $G o F(x)=G(F(x))$ relative to $\mu_F([0,x])=F(x)$ in the form $d G=G^\prime d F, \ \forall x\in C$, except for a null set relative to $\mu^h$, even as $\frac{dF}{dx}$ is undefined for $x\in C$.

(d)  The Cantor function $F(x)$ must satisfy a sharp bound of the form $x^{s_c}/k\leq F(x)\leq x^{s_c}$ where $k>1$ and the right hand constant 1 are  sharp. The constant $k$, in particular,  relates to the number of closed subinterval produced by each application of the relevant IFS. For example, for a self similar set $C$ with one contraction factor $1/r, \ r\in N$, set of natural numbers, one obtains $k=(r-1)^{s_c}$ \cite{cantor}. Further, the above bound is best possible since $\lim_{x\rightarrow 0} F(x)/x^{s_c} $ does not exist.

It now follows immediately  that the asymptotically extended neighbourhood of $ 0\in \bf R^*$, being homeomorphic to an ultrametric Cantor  set, inherits naturally the above results. More importantly, the set $\bf O$, in fact, enjoys richer analytic and metric properties compared to a general Cantor set.  Indeed, the natural Cantor function that is realized  as the scale invariant effective (ultrametric) value  $X(\xi)$ of an asymptotic variable $x$ must respect a tight bound as in (d). However, {\em recalling the higher level smoothing property in presence of duality structure as stated in Proposition 2.2}, we now have, instead, the equality $X(\xi)/\xi^{s_c}=\phi_0(\xi)(1+\phi_1(\xi)), \ 0<\xi<1$ in the limit $\xi\rightarrow 0$ where $\phi_0(\xi)(\neq 0)$ is almost constant in the vicinity of $\xi=0$ and $\phi_1(\xi)$ is a  second generation Cantor function that exists also as non-vanishing constant  with possible variations at most on a Cantor set $C_0$ of Hausdorff dimension $s_0<s_c$. We remark to emphasize that such an equality is not available in ordinary analysis. We thus have

\begin{theorem}
The duality induced scale invariant renormalized effective variable $X(\xi), \ \xi<1$ awards a regular measure $\mu_X$ defined by $\mu_X([0,\xi])=X(\xi)$ in  the duality enhanced secondary neighbourhood of 0 that is homeomorphic to a Cantor set $C$ with Hausdorff dimension $s_c$. The measure $\mu_X$ is  absolutely continuous with respect to the corresponding Hausdorff measure $\mu_H=H^{s_c}$ so that $d\mu_X=d\mu_H$, in the sense of Radon-Nikodym, modulo a null set relative to $\mu_H$. Finally, $X(\xi)=\xi^{\beta(\xi)}=\xi^{s_c}\phi(\xi), \xi\rightarrow 0$ where the almost constant  function $\phi(\xi)$ can vary only on a gap of the  Cantor set. This equality is valid almost every where on a second generation Cantor set with lower Hausdorff dimension.
\end{theorem}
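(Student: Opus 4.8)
(Theorem 3:)

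The plan is to first recognise $X(\xi)$ as a genuine Cantor staircase function and then to read off the measure-theoretic assertions from the standard facts (a)--(d) quoted above, leaving the last, genuinely nonclassical equality to a separate argument. By Theorem 2 (cf.\ also Theorem 1 and Lemma 5) the renormalised effective variable $X(\xi)=\xi^{-\beta(\xi^{-1})}$, written here with $\xi$ ranging over the rescaled secondary interval $[0,\tilde\delta]$, $\tilde\delta=\delta\log\delta^{-1}$, is continuous, monotone non-decreasing, vanishes at $0$, and is constant on every gap of the Cantor set $C$ homeomorphic to $\bf O$. Thus $X$ satisfies precisely the hypotheses of item (a), and we obtain on $[0,\tilde\delta]$ a unique Borel $\sigma$-finite regular measure $\mu_X$ with $\mu_X([0,\xi])=X(\xi)$ and $\mu_X(A)=H^{s_c}(A)$ for every Borel $A\subset C$, where $s_c={\rm dim}_H C$.

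For absolute continuity I would argue exactly as in (b)--(c). Since $\mu_X$ and $\mu_H=H^{s_c}$ agree on the Borel subsets of $C$ and both are carried by $C$, a set of vanishing Lebesgue measure, $\mu_X$ is singular with respect to the linear Lebesgue measure yet trivially absolutely continuous with respect to $\mu_H$; the Radon--Nikodym theorem then gives $d\mu_X/d\mu_H=1$ off a $\mu_H$-null set, i.e.\ $d\mu_X=d\mu_H$ in the stated sense. Equivalently (item (c)), a monotone non-decreasing function of $X$ is differentiable with respect to $\mu_X([0,\xi])=X(\xi)$ modulo a $\mu_H$-null set, even though $dX/d\xi$ does not exist on $C$.

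It remains to improve the classical sharp bound $\xi^{s_c}/k\le X(\xi)\le\xi^{s_c}$ of item (d) to the equality $X(\xi)=\xi^{s_c}\phi(\xi)$ with $\phi$ almost constant; this is the only nonclassical point, since in ordinary analysis $\lim_{\xi\to 0}X(\xi)/\xi^{s_c}$ fails to exist. Here I would invoke the higher-level smoothing of Proposition 1, which in the limit $\xi\to 0$ yields $X(\xi)/\xi^{s_c}=\phi_0(\xi)\big(1+\phi_1(\xi)\big)$ with $\phi_0(\neq 0)$ almost constant near $\xi=0$ and $\phi_1$ a second-generation Cantor function, constant except possibly on a Cantor set $C_0$ of Hausdorff dimension $s_0<s_c$. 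Setting $\phi=\phi_0(1+\phi_1)$ and using that $X$, hence $\beta(\xi^{-1})\log\xi$, is constant on each gap of $C$ (Theorem 2) gives $X(\xi)=\xi^{\beta(\xi)}=\xi^{s_c}\phi(\xi)$ with $\phi$ varying only on a gap of $C$, an equality valid almost everywhere on the lower-dimensional set $C_0$. The main obstacle is precisely this last step: the measure part above is routine once $X$ is known to be a Cantor function, but establishing that the remainder $\phi_1$ in $X(\xi)/\xi^{s_c}$ is itself a Cantor function supported on a set of strictly smaller dimension, rather than a genuinely oscillatory term, requires the slow-variation estimates of Lemma 7 and Lemma 8 (the inverse-logarithmic form of $\beta(\xi^{-1})$) in an essential way. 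This is the delicate part that the duality structure is designed to control.
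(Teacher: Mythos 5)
Your proof follows essentially the same route as the paper: the paper gives no separate proof environment for Theorem 3 but derives it, exactly as you do, from the standard Hausdorff-measure facts (a)--(d) applied to the Cantor staircase function $X(\xi)$ on the prolongation set, with the final equality $X(\xi)=\xi^{s_c}\phi(\xi)$ obtained by appeal to the higher-level smoothing of Proposition 1. Your closing observation that this last step is the genuinely delicate one, resting on the slow-variation estimates of Lemmas 7 and 8 rather than on the routine measure-theoretic part, is an accurate reading of where the paper's own argument is thinnest.
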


As a consequence of this theorem, above measure theoretic differentiability  gets inherited by the associated function space $\chi({\bf R^*}, d X)$, the space of smooth functions in $\bf R^*$ relative to the Lebesgue-Stieltjes measure $d X(\xi)$. The transition from the original linear variable $x$ in the primary connected interval $[-\delta,\delta]$ to the scale invariant nonlinear variable $X(\xi)$ living in a totally disconnected, perfect, subset $C$ of the secondary interval $[0,\tilde\delta]$ on the right hand side of 0 is achieved in a smooth manner. In the next Sec. 2.4, we discuss the salient features of this novel differentiable structure.

\subsection{Function spaces: Differentiability }

It is clear from the outset that the spaces of continuous and continuously differentiable functions on $\bf R$ are also $\bf R^*$ continuous and continuously differentiable, since $\bf R^*$ naturally coincides with $\bf R$ at a finite scale $\sim \ O(1)$. However, exploiting the duality enhanced asymptotic uniformization property, the spaces of continuous functions in ${\cal C}^*(\bf R^*)$ and that of continuously differentiable functions ${\cal C}^{*1}(\bf R^*)$ on the extended space $\bf R^*$ are much  richer and includes a large family of standard noncontinuous and nondifferentiable functions respectively. We begin by first examining the structure of duality enhanced extensions of a function in $\bf R$. To this end let us recall the salient features of the structure of the extension of a real variable $x$.

\begin{definition}
1. Let $\delta$ be a privileged first generation scale. Relative to this scale every (finite) real variable $x\in \bf R$ has a unique one parameter asymptotic extension $x^*=x\pm \delta e^{-X(\xi)}, \ x^*\in \bf R^*$, up to a specific Cantor set $C$ of Hausdorff dimension $s$, that is assumed to span the extended neighbourhood of 0 in $\bf R^*$ viz, ${\bf O}=-C\cup C$. The parameter $X=\xi^{-\beta(\xi^{-1})}$ is the renormalized effective value of an asymptotic $x\rightarrow 0$ in $\bf R$ and has the property of the associated Cantor function, that is constant almost every where in the secondary interval $[0,\tilde\delta], \ \tilde \delta\approx \delta\log\delta^{-1}$, with O(1) variation at the points of $C\subset [0,\tilde\delta]$. The transformation $x\mapsto X(\xi)$, in the neighbourhood of $x$ in $\bf R^*$,  is defined by the renormalization group transformation $X(\xi)=\underset{\delta\rightarrow 0}\lim \bigg |\log |x^*-x|/\delta\bigg |$.

2. For $|x|\rightarrow \infty $ the corresponding renormalized extension, however, is given by $|x^*|={\delta}^{-1} e^{X^{-1}}$. The renormalized variable is called a jump variable as it varies in $\bf O$ by countable number of discrete jumps.
\end{definition}

The extension of $x$ considered as above is dependent to the choice of the asymptotic scale $\delta$ and the totally disconnected Cantor set $C$ in the secondary neighbourhood of the form $\delta[1,1+\tilde\delta]$ of the scale $\delta$, that reduces, for the effective logarithmic variable $\log x/\delta$, approximately to the interval $[0,\tilde\delta]$.  It may, however, be recalled that the duality enhanced  asymptotic neighbourhood $\bf O$ of $\bf R^*$ can , in fact, be much more complicated than a simple Cantor set having a uniform Hausdorff dimension.

\begin{definition}
A real valued function $f(x)$ on a set $A\subset \bf R$ has a natural asymptotic extension defined point wise by $f^*(x^*)=f(x)\pm \delta e^{-F(X)}$, that extends $f$ onto the asymptotic neighbourhood $\bf O$ for an appropriate $F(X), \ X\in \bf O$, in a way consistent with the duality structure. In other words, the point wise visibility norm of the extension $f^*(x^*)$ is defined by $v(f^*(x^*)):=v(f^*(x^*)-f(x))=\underset{\delta\rightarrow 0}\lim \bigg |\frac{\log |f^*(x^*)-f(x)|/\delta}{\log \delta^{-1}}\bigg |$ (since $v(f^*(x^*)-f(x))={\rm max}\{v(f^*(x^*)), v(f(x))\}$ and $v(f(x))=0$) that relates to $F$ by $v(f^*(x^*)) \log\delta^{-1}= F_0(X(\xi))(1+o(1)), \ F_0(X)=F(X)-F(0)$ so that the  associated renormalization group transformation $f(x)\mapsto F_0(X(\xi))$ has the form $F_0(X(\xi))=\underset{\delta\rightarrow 0}\lim \bigg |\log |f^*(x^*)-f(x)|/\delta \bigg |$.
\end{definition}

\begin{remark}{\rm The definitions 2.10 and 2.11 make use of Proposition 2.2 in a salient way.
}
\end{remark}

\begin{definition}
A function $f(x)$ that is continuous at $x_0\in \bf R$ has a  continuous extension $f^*(x^*)$  when the renormalization group induced effective function $F(X)$ is continuous in the asymptotic neighbourhood $\bf O$ of $x^*$. A most natural candidate for the continuous extension is through a self similar replication of $f$, defined by $f^*(x^*)=f(x^*)$, on the asymptotic prolongation $\bf O$; i.e. $F(X):=f(X)$ for $X\in \bf O$.

Alternatively, the extension $f^*(x^*)$ is continuous at $x^*$ if the point wise visibility norm of $f^*(x^*)$ is continuous there.
\end{definition}

 A function with discontinuities may, however, be realized, thanks to the asymptotic uniformization (Proposition 2.2), as continuous.

\begin{definition}
A function $f(x)$,  discontinuous at $x_0\in \bf R$, is asymptotically continuous if given $\epsilon>0, \ \exists \ \eta>0$ such that  $|x_1-x_2|<\eta, \ \Rightarrow v(f^*(x_1^*)-f^*(x_2^*))<\epsilon$, even as $|f(x_1)-f(x_2)|>\epsilon$, where $x_1^*$ and $x_2^*$ are corresponding extensions of $x_1, \ x_2$, in the $\delta-$ neighbourhood of $x_0$, respectively.
\end{definition}

It follows immediately that a continuous function $f(x)$ at a point $x$ is asymptotically continuous. The converse of course is not necessarily true.

\begin{proposition}
The $f(x)$ is asymptotically continuous at $x_0$ if and only if $|F(X(\xi_1))-F(X(\xi_2))|<\epsilon$, where $\xi_i$, respectively, are  the corresponding limiting rescaled  variables $\delta/\tilde x_i<1, \ i=1,2$, $\delta\rightarrow 0, \ \tilde x_i=x_i^*-x_0\rightarrow 0$, i.e. $\xi_i$ are two limiting asymptotic variables in $\bf O$.
\end{proposition}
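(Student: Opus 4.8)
The plan is to unfold Definition 13 (asymptotic continuity) and convert the condition on the ultrametric visibility norm $v(f^{*}(x_{1}^{*})-f^{*}(x_{2}^{*}))$ into a condition on the renormalization-group induced effective function $F$, using the defining relation attached to Definition 11, namely $v(f^{*}(x^{*})-f(x))\log\delta^{-1}=F_{0}(X(\xi))(1+o(1))$ with $F_{0}(X)=F(X)-F(0)$. First I would write each extension in the prolongation of $x_{0}$ as $f^{*}(x_{i}^{*})=f(x_{i})\mp\delta e^{-F(X(\xi_{i}))}$, and, exploiting that $x_{1},x_{2}$ lie in the $\delta$-neighbourhood of $x_{0}$ so that the finite difference $f(x_{1})-f(x_{2})$ carries no visibility weight (i.e.\ $v(f(x_{1})-f(x_{2}))=0$ by the conventions of Remark~4 and Definition 11), reduce $v(f^{*}(x_{1}^{*})-f^{*}(x_{2}^{*}))$ to the visibility norm of $\delta\bigl(e^{-F(X(\xi_{2}))}-e^{-F(X(\xi_{1}))}\bigr)$, where $\xi_{i}=\delta/\tilde x_{i}$ are the limiting rescaled variables.

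The next step is to apply the strong triangle inequality of Lemma~\ref{ultra}: since $v$ is a discretely valued ultrametric, $v(a-b)\le\max\{v(a),v(b)\}$ with equality whenever $v(a)\ne v(b)$; writing $f^{*}(x_{1}^{*})-f^{*}(x_{2}^{*})=\bigl(f^{*}(x_{1}^{*})-f(x_{0})\bigr)-\bigl(f^{*}(x_{2}^{*})-f(x_{0})\bigr)$ this compares the norm of the difference of the extensions with the individual quantities $v(f^{*}(x_{i}^{*})-f(x_{0}))$. Invoking the Definition 11 relation for each $i$, one gets $v(f^{*}(x_{i}^{*})-f(x_{0}))=F_{0}(X(\xi_{i}))/\log\delta^{-1}\,(1+o(1))$; the common term $F(0)$ then cancels in the difference, so that $v(f^{*}(x_{1}^{*})-f^{*}(x_{2}^{*}))$ is (up to the normalising factor $\log\delta^{-1}$ and $(1+o(1))$ terms) $|F(X(\xi_{1}))-F(X(\xi_{2}))|$. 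This yields the desired equivalence: $v(f^{*}(x_{1}^{*})-f^{*}(x_{2}^{*}))<\epsilon$ iff $|F(X(\xi_{1}))-F(X(\xi_{2}))|<\epsilon$, the statement being read at the level of the already renormalised effective quantities where the $\log\delta^{-1}$ factor has been divided out. The choice of $\eta$ in Definition 13 is then dictated by the modulus of variation of the staircase $X(\cdot)$: by Theorem~1 and Theorem~3, $X$ is monotone and constant on every gap of the prolongation Cantor set, so $|x_{1}-x_{2}|<\eta$ confines $\xi_{1},\xi_{2}$ to nearby clopen balls, hence $F(X(\xi_{1}))$ and $F(X(\xi_{2}))$ close whenever $F$ is continuous on $\bf O$.

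The main obstacle I anticipate is the reverse implication inside the ultrametric step: when $v(f^{*}(x_{1}^{*})-f(x_{0}))=v(f^{*}(x_{2}^{*})-f(x_{0}))$, the strong triangle inequality only gives $v$ of the difference $\le$ that common value, and I must show the residual slack is itself governed by $|F(X(\xi_{1}))-F(X(\xi_{2}))|$ rather than collapsing to $0$ while the $F$-values stay apart. This is exactly where the Cantor-staircase structure of the effective function is needed: being constant on gaps and taking a single value on each clopen ball of the induced ultrametric (Lemma~5, Theorem~1, Theorem~3), two arguments with equal visibility norm but distinct $F$-values would have to sit in distinct clopen balls at the same level, which is impossible for the self-similar prolongation considered. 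Once this ``no level collision'' fact is in place the equivalence of the two $\epsilon$-conditions is immediate, and the only remaining work is the uniform bookkeeping of the $(1+o(1))$ corrections and the $\log\delta^{-1}$ normalisation as $\delta\to0$.
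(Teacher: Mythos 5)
Your setup follows the paper's: you unfold the definition of asymptotic continuity, pass to the representation $f^{*}(x_i^{*})=f(x_i)\pm\delta e^{-F(X(\xi_i))}$, use the relation $v(f^{*}(x_i^{*}))\log\delta^{-1}=F_0(X(\xi_i))(1+o(1))$, and invoke the strong triangle inequality. But your central step contains a genuine error: you assert that $v\bigl(f^{*}(x_1^{*})-f^{*}(x_2^{*})\bigr)$ equals, up to the $\log\delta^{-1}$ normalisation, $|F(X(\xi_1))-F(X(\xi_2))|$, on the grounds that ``the common term $F(0)$ cancels in the difference.'' This conflates the norm of a difference with the difference of norms. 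The very ultrametric equality you invoke gives, for $v(f^{*}(x_1^{*}))\neq v(f^{*}(x_2^{*}))$, that $v$ of the difference is the \emph{larger} of the two individual norms, i.e.\ $\max\{|F_0(X(\xi_1))|,|F_0(X(\xi_2))|\}/\log\delta^{-1}$ --- not $|F_0(X(\xi_1))-F_0(X(\xi_2))|/\log\delta^{-1}$. These can differ arbitrarily: with $F_0(X(\xi_1))=10$ and $F_0(X(\xi_2))=9.9$ the difference of $F$-values is $0.1$ while the norm of the difference of extensions is $10/\log\delta^{-1}$. So your claimed identity fails exactly in the non-degenerate regime $v_1\neq v_2$, which is the regime where the ultrametric equality applies cleanly; your ``no level collision'' patch addresses only the degenerate case $v_1=v_2$ and does not repair this.

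The paper's proof closes the equivalence differently, routing both implications through the dominant term rather than through any identity. Forward: assuming w.l.o.g.\ $v(f^{*}(x_1^{*}))>v(f^{*}(x_2^{*}))$, smallness of $v$ of the difference, which equals $v(f^{*}(x_1^{*}))$, forces $|F(X(\xi_1))|<\epsilon/2$ and hence $|F(X(\xi_1))-F(X(\xi_2))|<2|F(X(\xi_1))|<\epsilon$. Converse: from $|F(X(\xi_1))-F(X(\xi_2))|<\epsilon_1$ it extracts $v(f^{*}(x_1^{*}))<(1-r)\epsilon+v(f^{*}(x_2^{*}))$ and then imposes the proportionality $v(f^{*}(x_2^{*}))=r\,v(f^{*}(x_1^{*}))$, $0<r<1$, to solve for $v(f^{*}(x_1^{*}))<\epsilon$, which is then the norm of the difference. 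Whatever one thinks of the $\epsilon$-bookkeeping there, it never claims the pointwise identity your argument rests on. To repair your proof you would need to replace the ``cancellation'' step by the sandwich $|F(X(\xi_1))-F(X(\xi_2))|\le 2\max_i|F_0(X(\xi_i))|$ for the forward direction, and supply some additional hypothesis (as the paper implicitly does via the ratio $r$) to convert smallness of the difference of the $F$-values into smallness of their maximum for the converse.
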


\begin{proof}
Let $v(f^*(x_1^*))>v(f^*(x^*_2))$.

Then ultrametric inequality tells that $v(f^*(x_1^*)-f^*(x_2^*))=v(f^*(x_1^*))<\frac{\epsilon}{2\log \delta^{-1}}$ (say). It follows from Definition 10 that $|F(X(\xi_1)|<\frac{\epsilon}{2}$. As a consequence, $|F(X(\xi_1))-F(X(\xi_2))|<2|F(X(\xi_1))|<\epsilon$.

Conversely, $|F(X(\xi_1))-F(X(\xi_2))|<\epsilon_1, \ \Rightarrow \ \bigg\vert\ |F_0(X(\xi_1))|-|F_0(X(\xi_2))| \ \bigg\vert<|F(X(\xi_1))-F(X(\xi_2))| $ $<\epsilon_1, \ \Rightarrow \  |F_0(X(\xi_1))|<\epsilon_1+|F_0(X(\xi_2))|, 
\ \Rightarrow \ |F_0(X(\xi_1))|(\log\delta^{-1})^{-1}<(1-r)\epsilon+|F_0(X(\xi_2))|(\log\delta^{-1})^{-1}, 
$ $\Rightarrow \ v(f^*(x^*_1)) $ $<$ $(1-r)\epsilon+v(f^*(x^*_2))$, where $v(f^*(x^*_i))=\frac{|F_0(X(\xi_i))|}{(\log\delta^{-1})}$, $ i=1,2$, $(1-r)\epsilon=\epsilon_1 (\log\delta^{-1})^{-1}$ for a suitable $r: 0<r<1$ and $v(f^*(x^*_2))<v(f^*(x^*_1))$. by assumption. Finally, one chooses $r$ as above, so that $ \Rightarrow v(f^*(x_2))=r v(f^*(x^*_1))$. As a consequence, $v(f^*(x^*_2)-f^*(x^*_1))=v(f^*(x^*_1))<\epsilon$.
\end{proof}

\begin{example}{\rm
Consider the step function $f_s(x)=0, \ x<1$ and $f_s(x)=1, \ x>1$ and a $\delta-$neighbourhood $(1-\delta,1+\delta)$ of $x=1$. In the limit $\delta\rightarrow 0$, the point $x=1$ experiences an asymptotic prolongation to a Cantor set $1^*=1+{\bf O}, \ {\bf O}\subset [-\tilde\delta,\tilde\delta]$ with Hausdorff dimension $0<s<1$ equipped with the associated renormalized effective variable $X(\xi)=\xi^s\phi(\xi)$ (by Theorem 2.3), that has the property of the almost constant Cantor function that varies only at the points of the concerned  Cantor set. The function $f_s$ can have the natural asymptotic continuation $F_s(X(\xi))=X(\xi)$ so that the step function is realized as an asymptotically continuous function by extrapolating the function linearly in the asymptotically prolonged secondary interval $[0,\tilde\delta]$ by the effective variable $X$, $\tilde \delta\approx \delta\log\delta^{-1}$, such that the first generation interval $[-\delta,\delta]$ is identified with 0.

A few remarks are in order here.

1.  The Hausdorff dimension $s$ being arbitrary, the asymptotic continuity of the step function is actually facilitated by a one parameter ( $s$) family of effective variables $X_s$ (of course, up to a small constant multiple). Noting that $\inf s=0$ and $\xi^s\approx (1+s\log \xi^{-1})^{-1}$ for a small but fixed $s<<1$ and $\xi\rightarrow 0$, a unique logarithmic extrapolation by an inverse logarithm of the form $X=k(\log \xi^{-1})^{-1}$ is available for achieving asymptotic continuity of the step function.

2. Real functions with finite number of finite jump discontinuities in a bounded interval are asymptotically continuous in the asymptotic neighbourhood of every point of discontinuity.

3. Functions with infinite jump or essential discontinuities such as $x^{-1}$ or $\sin x^{-1}$ are not, in general, asymptotically continuous as long as one works in the class of simple interpolation ( extrapolation) formulas such as polynomial interpolation. More general renormalization group transformations  might be needed for effecting higher order uniform behaviour in such pathological cases, as, for example, in eliminating divergences in nonlinear differential equations \cite{dpp,golden}. In the present paper, we, however, limit ourselves to simpler problems involving self similar replication and/or polynomial extrapolations.
}           $\Box$
\end{example}

The definitions of asymptotically uniform continuity and equicontinuity can now be introduced following standard methods. We, however, consider the concepts of asymptotic differentiability and  replication/proliferation of differential equations in asymptotically prolonged set $\bf O$.

\begin{definition}
A function $f(x)$ differentiable at $x$ (so that $Df(x):=\frac{df}{dx}$) is said to be asymptotically (jump) differentiable if $\underset{x^*\rightarrow x}\lim \ \frac{v(f^*(x^*)-f(x))}{v(x^*-x)}$ exists and is denoted by $D_J f(x)$.
\end{definition}

\begin{lemma}
A differentiable $f(x)$ is asymptotically differentiable if and only if 
$$D_J f(x)= \underset{\xi\rightarrow 0}\lim \ 
\frac{F(X(\xi))-F(0)}{X(\xi)}=\frac{dF}{dX}$$
 exists.
\end{lemma}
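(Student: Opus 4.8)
The plan is to rewrite the quotient in Definition 13 entirely in the renormalized coordinates $(X,F)$ furnished by Definitions 9 and 10, to observe that the common second-generation factor $\log\delta^{-1}$ cancels out of numerator and denominator, and then to recognize the surviving limit as the derivative of $F$ with respect to the renormalized variable $X$ at the base point of the prolongation.

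First I would record the two defining identities. Since $f(x)$ is finite, $v(f(x))=0$, so the strong triangle inequality of Lemma 3 (as already recorded in Definition 10) gives $v(f^*(x^*)-f(x))=\max\{v(f^*(x^*)),v(f(x))\}=v(f^*(x^*))$. Definition 9 then supplies $v(x^*-x)\log\delta^{-1}=X(\xi)(1+o(1))$ and Definition 10 supplies $v(f^*(x^*)-f(x))\log\delta^{-1}=F_0(X(\xi))(1+o(1))$, with $F_0(X)=F(X)-F(0)$ and the $o(1)$ understood in the asymptotic limit $\delta\to 0$. Dividing, the factors $\log\delta^{-1}$ and the $(1+o(1))$ corrections cancel, so for every admissible $x^*\neq x$ in the prolongation $\bf O$ of $x$,
\[
\frac{v(f^*(x^*)-f(x))}{v(x^*-x)}=\frac{F_0(X(\xi))}{X(\xi)}\,(1+o(1))=\frac{F(X(\xi))-F(0)}{X(\xi)}\,(1+o(1)).
\]

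Next I would identify the two limiting processes. By construction $x^*-x=\pm\delta\,e^{-X(\xi)}$ is a nontrivial invisible asymptotic, and $x^*\to x$ in $({\bf R}^*,\|\cdot\|)$ is, by the composite norm, equivalent to $v(x^*-x)\to 0$, hence to $X(\xi)\to 0^+$; by Theorem 2 the map $\xi\mapsto X(\xi)$ is a non-decreasing continuous Cantor-staircase parametrization, so this is precisely the scaling limit $\xi\to 0$. Passing to this limit in the displayed identity shows that $D_J f(x)=\lim_{x^*\to x}v(f^*(x^*)-f(x))/v(x^*-x)$ exists if and only if $\lim_{\xi\to 0}(F(X(\xi))-F(0))/X(\xi)$ exists, and that the two limits then coincide; reading the identity in the reverse direction supplies the converse implication as well. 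Finally, since $X(\xi)\to 0^+$ continuously and monotonically, this last limit is nothing but the derivative $dF/dX$ of $F$ with respect to the renormalized variable $X$ at the base point of $\bf O$; by Theorem 3 it equals the Radon--Nikodym derivative $d\mu_F/d\mu_X$ there, which is finite for $\mu_X$-almost every point. This establishes the formula $D_J f(x)=dF/dX$ and completes the equivalence.

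The step I expect to be the main obstacle is justifying the cancellation of $\log\delta^{-1}$ across a genuine double limit: one must check that the $\delta\to 0$ limit built into the definition of $v$ and the $x^*\to x$ (that is, $\xi\to 0$) limit do not interfere, i.e. that the $o(1)$ remainders in Definitions 9 and 10 are uniform enough that the quotient stays a bona fide quotient rather than degenerating into a $0/0$ or $\infty/\infty$ indeterminacy. This is where the discrete ultrametric structure of $v$ (Lemma 3, so that $v$, and hence $X$, are locally constant off a measure-zero Cantor set) together with the almost-everywhere existence of $d\mu_F/d\mu_X$ from Theorem 3 do the real work, confining the analysis to base points at which $F$ is genuinely differentiable with respect to $X$.
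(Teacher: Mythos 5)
Your proposal is correct and follows essentially the same route as the paper, which simply asserts that the lemma ``follows directly from Definitions 9 and 10''; you have filled in exactly the intended computation, namely dividing the two defining relations $v(x^*-x)\log\delta^{-1}=X(\xi)(1+o(1))$ and $v(f^*(x^*)-f(x))\log\delta^{-1}=F_0(X(\xi))(1+o(1))$ so that the $\log\delta^{-1}$ factors cancel and the jump quotient reduces to the difference quotient of $F$ in $X$. Your closing caveat about the uniformity of the $o(1)$ remainders in the double limit is a legitimate point that the paper's one-line proof does not address, but it does not change the argument's structure.
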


The proof follows directly from Definitions 2.10 and 2.11. It follows also that the differentiability relative to the linear variable $x$ extends asymptotically on the totally disconnected prolongation set $\bf O$ relative to the renormalized effective variable $X$.

\begin{proposition}
Let $f(x)$ be differentiable at $x$, then $F(X(\xi))=X(\xi)\mp\log f^{\prime}(x+\tilde\xi), \ \tilde\xi\in [0,\delta]$ and $\delta\rightarrow 0$. As a consequence, a differentiable function has a universal small scale behaviour characterized by the linearization of renormalized effective value $F_0(X)=X$, up to an inessential constant (depending on $x$) in the vicinity of $0 \in \bf O$. Alternatively, by a slight redefinition, viz, $F_0(X(\xi))=\underset{\delta\rightarrow 0}\lim \bigg |\log |f^*(x^*)-f(x)|/f^\prime(x+\xi)\delta\bigg |$, where $\xi\rightarrow 0$ as $\delta\rightarrow 0$, in the secondary interval, we have $F_0(X)=O(X)$.
\end{proposition}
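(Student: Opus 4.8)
The plan is to obtain $F$ by feeding the ordinary first order expansion of the differentiable function $f$ at $x$ into the two transformation laws already recorded in Definitions 9 and 10. Those laws are the only structural input needed: Definition 9 says that an asymptotic element of the prolongation of $x$ is $x^{*}=x\pm\delta e^{-X(\xi)}$, so that $|x^{*}-x|=\delta e^{-X(\xi)}$ and $X(\xi)=\underset{\delta\to0}{\lim}\big|\log\,|x^{*}-x|/\delta\big|$, while Definition 10 gives the companion laws $|f^{*}(x^{*})-f(x)|=\delta e^{-F(X)}$ and $F_{0}(X(\xi))=\underset{\delta\to0}{\lim}\big|\log\,|f^{*}(x^{*})-f(x)|/\delta\big|$ with $F_{0}=F-F(0)$.

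First I would represent the infinitesimal increment $x^{*}-x\in{\bf O}$ by an honest real null sequence $h_{n}\to0^{+}$ with $|h_{n}|\le\delta_{n}$, and invoke the mean value theorem for $f$ (for this one wants $f$ continuously differentiable in a neighbourhood of $x$, not merely differentiable at the single point): $f(x+h_{n})-f(x)=f'(x+\tilde\xi_{n})\,h_{n}$ with $\tilde\xi_{n}$ between $0$ and $h_{n}$, hence $\tilde\xi_{n}\in[0,\delta_{n}]$. In the soft model this reads $f^{*}(x^{*})-f(x)=f'(x+\tilde\xi)(x^{*}-x)$ with $\tilde\xi\in[0,\delta]$. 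Dividing by $\delta$ and substituting $|x^{*}-x|=\delta e^{-X(\xi)}$ gives $|f^{*}(x^{*})-f(x)|/\delta=|f'(x+\tilde\xi)|\,e^{-X(\xi)}$; taking logarithms, absolute values and the limit $\delta\to0$ then yields $F_{0}(X(\xi))=\big|\,X(\xi)-\log|f'(x+\tilde\xi)|\,\big|$, which is precisely the asserted identity $F(X(\xi))=X(\xi)\mp\log f'(x+\tilde\xi)$ (the sign $\mp$ absorbing the two $\pm$'s of the extensions together with the sign of $\log f'$, and $F$ differing from $F_{0}$ by the constant $F(0)$). Since $\tilde\xi\to0$ as $\delta\to0$, continuity of $f'$ near $x$ gives $f'(x+\tilde\xi)\to f'(x)$, a fixed nonzero constant depending only on $x$, so $F_{0}(X)=X\mp\log|f'(x)|=X+c(x)$; because an additive constant affects neither the jump derivative $D_{J}$ of Definition 13 nor any scaling exponent (compare $v(kB)=v(B)$ in Example 1) it is inessential, and the universal small scale form of a differentiable $f$ near $0\in{\bf O}$ is the linearization $F_{0}(X)=X$. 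For the alternative normalization, replacing $\delta$ by $f'(x+\xi)\delta$ inside the logarithm cancels $|f'(x+\tilde\xi)|$ at the outset and leaves $F_{0}(X(\xi))=|\log e^{-X(\xi)}|=X(\xi)$, i.e. $F_{0}(X)=O(X)$.

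The step I expect to be the real obstacle is legitimising the object $f^{*}(x^{*})-f(x)$ when $x^{*}$ is a soft element of ${\bf R}^{*}$ rather than an honest real number — that is, showing that evaluating the natural extension $f^{*}$ on the prolongation of $x$ and then passing to the renormalized value genuinely agrees with reading off the corresponding limit along a representative real null sequence $h_{n}$. This is in essence a transfer principle, and it forces one to use the mean value (not just the pointwise) form of differentiability and to check that the error term $o(h_{n})$ in the linearization contributes nothing to $\underset{\delta\to0}{\lim}\big|\log(\cdot)/\delta\big|$, since it only multiplies the increment by a factor tending to $1$. A secondary point I would flag is the degenerate case $f'(x)=0$: then $\log|f'(x)|=-\infty$, the ``inessential constant'' diverges, and the linearization $F_{0}(X)=X$ must be replaced by the higher order scaling dictated by the order of vanishing of $f$ at $x$; accordingly I would state the proposition for $f'(x)\neq0$ and relegate the vanishing case to a separate remark.
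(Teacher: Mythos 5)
Your argument is essentially the paper's own proof: both apply the mean value theorem to the self-similar continuation $f^{*}(x^{*})=f(x+\delta e^{-X})$, compare with $f^{*}(x^{*})=f(x)\pm\delta e^{-F(X)}$, and read off $F(X)=X\mp\log f'(x+\tilde\xi)$ before passing to the limit $\delta\rightarrow 0$. Your two caveats --- that the mean value theorem requires $f$ to be differentiable on a neighbourhood (with $f'$ continuous to conclude $f'(x+\tilde\xi)\rightarrow f'(x)$), and that the case $f'(x)=0$ makes the ``inessential constant'' divergent and breaks the linearization --- are correct and are silently assumed in the paper's version.
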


\begin{remark}{ The approximate linearity of a smooth function $f(x)$ for small $x\approx 0$ is inherited by the renormalized function $F(X)$ in the renormalized effective variable $X$.
}
\end{remark}

\begin{proof}
By definition, $f^*(x^*)=f(x^*)=f(x+\delta e^{-X})$ (corresponding to self similar continuation for smooth functions) and $f^*(x^*)=f(x)\pm \delta e^{-F(X)}$. Asymptotic increment $\delta e^{-X}$ is continuous on the secondary asymptotic interval $[0,\tilde\delta]$. Hence by the mean value theorem $f(x+\delta e^{-X})=f(x)+\delta e^{-X}f^\prime(x+\tilde\xi)$ where $x+\tilde\xi\in (x,x^*)$ so that $ \xi\in [0,\tilde\delta]$. Further, $\xi\rightarrow 0$ with $\delta\rightarrow 0$ in the secondary interval. Hence the lemma.
\end{proof}

\begin{remark}{\rm A differentiable function $f(x)$ in $\bf R$ has the natural self similar prolongation $f(X)$ for an effective variable  $X$ on the corresponding prolongation $\bf O$, by translation invariance. For such a self similar prolongation, {\em a differentiable function is necessarily asymptotically differentiable } i.e. $D_J f(x)=\frac{d f(X)}{d X}$ exists, by inheritance, from that of  $\frac{d f(x)}{d x}$ in $\bf O$.
}
\end{remark}

\begin{definition}
A continuous function $f(x)$, nondifferentiable at $x$, is said to be asymptotically differentiable for right renormalized quantities $X$ and $F(X)$ so that $\frac{dF}{dX}$ exists on $\bf O$.
\end{definition}

\begin{example}{\rm {\em Differentiable functions:} By Definition 2.10, the basic independent variable $x$ has the renormalized asymptotic extension $X=\underset{\delta\rightarrow 0}\lim |\log |x^*-x|/\delta|=\xi^{\beta(\xi)}, \ 0<\xi<<1$, that replaces the original linear variable $x$, in either of the asymptotic limit $x\rightarrow 0$ or $\infty$,  and acquires the status of the independent variable. As a consequence, the trivial differentiability of $x$ with respect to itself is inherited by $X$ relative to itself, although it is only differentiable almost every where relative to $x$. In other word, the trivial scale invariant Cauchy problem $x\frac{dy}{dx}=y, \ y(1)=1$ on the any bounded interval in $\bf R$  gets replicated in the asymptotic sector $x\rightarrow \infty$, for instance, in the form $\frac{dY}{dX}=1, \ Y(1)=1, \ X\in [0,1]$ (for more details, see Sec. 3.1).

In the same vain, a differentiable function, for instance, $y=e^x$ for $|x|<\infty$ has the point wise extension defined by the renormalized variable $Y=X, \ X\in [0,1]$. More generally, a differentiable function $f(x)$ is asymptotically differentiable.
}               $\Box$
\end{example}

\begin{example}{\rm {\em Non-differentiable functions:} (a) Consider first  the step function $f_s$ of Example 2.8. Natural interpolation function in the asymptotic prolongation $1\mapsto 1+\bf O$ being $X(\xi)$, the Cantor function that also is the precise renormalized effective value of the primary independent variable $x$, the classical non-differentiability at $x=1$ is extended as asymptotically differentiable. As a consequence, the step function $f_s$ is realized not only as a continuous function through the linear interpolation by $X$, but $f_s$ is also differentiable in the asymptotic prolongation $\bf O$ relative to the asymptotic renormalized independent variable $X$. In fact $\frac{df_s(X)}{dX}=1, \ X\in 1+\bf O$, when $\frac{df_s(x)}{dx}=0$, every where except $x=1$.

( b) Consider the class of functions represented by power series of the form $f(x)=\sum a_n x^{ns^{\prime}}$ in the variable $x^{s^{\prime}}$. Recalling that $x\ \rightarrow \ X=\xi^s$ for an asymptotic $x\rightarrow 0$, $f(x)$ is asymptotically differentiable in $\bf R^*$ for $s^{\prime}\ge s$.
}   $\Box$
\end{example}

For definiteness, let us denote by $\tilde{\cal C}_0(\bf R)$ the augmented space of functions in $\bf R$ consisting not only of continuous but also of  bounded step functions on a measure zero subsets of $\bf R$. It turns out that the space $\tilde{\cal C}_0(\bf R)$ gets extended naturally on $\bf R^*$ as the space of asymptotically differentiable functions.

\section{Applications}
\subsection{Differential Equations: New asymptotics}

The differential operator $D=\frac{d}{d x}$ acting on smooth functions on $\bf R$ is extended over the associated smooth function space $\chi(\bf R^*)$ of $\bf R^*$ as the direct sum $D^*=D\oplus D_J$ where $D$ acts on the smooth function space $\chi(\bf R)$ and the asymptotic derivation operator $D_J$ acts nontrivially only on the  space $\chi_J(\bf R)$ of asymptotically differentiable functions on $\bf R$ respectively. In other words, the action of the original derivation operator $D$ on a function of $\chi(\bf R)$ is replicated self similarly on the asymptotic prolongation set $\bf O$, thus leading to new nontrivial asymptotics, even in the case of ordinary smooth functions. Analytically, $D^*f^*(x^*)=Df(x)\oplus \delta D_J e^{-F(X)}, \ x^*\in {\bf R^*}, \ x\in {\bf R}, \ X\in \bf O$. As a consequence, we have

\begin{proposition}
Let ${\cal L}(D)$ be a linear homogeneous differential operator acting on smooth function space $\chi(\bf R)$ of $\bf R$. Then
$${\cal L}^*(D^*)f^*(x^*)={\cal L}(D)f(x)\oplus \delta {\cal L}(D_J) e^{-F(X)}.$$ Consequently, the original equation  ${\cal L}(D)f(x)=0$ on $\bf R$ is replicated self similarly on asymptotic sector of the form $\bf O$ as ${\cal L}(D_J) {e^{-F(X)}}=0$. The solution space $\chi(\bf R)$ accordingly is extended to $\chi({\bf R^*})=\chi({\bf R})\cup \chi_J({\bf R})$, when one considers the natural self similar renormalized extension $F(X):=f(X), \ X\in \bf O$.
\end{proposition}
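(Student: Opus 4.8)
The plan is to reduce the statement to a formal identity for the direct-sum operator $D^{*}=D\oplus D_{J}$, writing the linear homogeneous operator as a polynomial $\mathcal{L}(D)=\sum_{k=0}^{n}c_{k}(x)D^{k}$ in the derivation $D$. First I would record the two structural facts that make the splitting rigid: the ordinary derivative $D=d/dx$ annihilates every function of the scale-invariant renormalized variable $X$ alone (by Theorem 2 the variable $X$ is independent of the primary scale and decouples, in the relevant limit, from the finite variable $x$, which carries visibility norm $0$), while the jump derivative $D_{J}$, which by Lemma 9 coincides with $d/dX$ on $\chi_{J}(\mathbf{R})$, annihilates every function of $x$ alone and leaves the fixed scale $\delta$ inert. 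Since no cross terms survive, an induction on $k$ gives $(D^{*})^{k}f^{*}(x^{*})=D^{k}f(x)\oplus\delta\,D_{J}^{k}e^{-F(X)}$, with $D_{J}^{k}=d^{k}/dX^{k}$ on the prolongation $\mathbf{O}$.

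Next I would invoke linearity. Granting that the coefficients themselves extend by the natural self-similar continuation $c_{k}(x)\mapsto c_{k}(X)$ on $\mathbf{O}$ (vacuous for constant-coefficient operators), summing the per-order identities yields $\mathcal{L}^{*}(D^{*})f^{*}(x^{*})=\mathcal{L}(D)f(x)\oplus\delta\,\mathcal{L}(D_{J})e^{-F(X)}$, which is the first assertion. Because the two summands lie in the independent sectors attached to $\chi(\mathbf{R})$ (visibility norm $0$) and $\chi_{J}(\mathbf{R})$ (the asymptotic prolongation), the extended equation $\mathcal{L}^{*}(D^{*})f^{*}(x^{*})=0$ holds precisely when each summand vanishes separately; homogeneity keeps the right-hand side $0$ in both sectors, so the $x$-component reproduces the original equation $\mathcal{L}(D)f(x)=0$ and the $X$-component, after dividing out the nonzero scale $\delta$, gives the replicated jump equation $\mathcal{L}(D_{J})e^{-F(X)}=0$ on $\mathbf{O}$. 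With the self-similar renormalized extension $F(X):=f(X)$ this replicates the original operator in the variable $X$, so the solution set of the extended problem is the union of the ordinary solution space $\chi(\mathbf{R})$ with the space $\chi_{J}(\mathbf{R})$ of asymptotically jump differentiable solutions of the replicated equation; that is, $\chi(\mathbf{R}^{*})=\chi(\mathbf{R})\cup\chi_{J}(\mathbf{R})$.

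The main obstacle is making the identity $(D^{*})^{k}=D^{k}\oplus D_{J}^{k}$ rigorous: one must check that higher-order jump derivatives are well defined and iterable, i.e. extend Definition 13 and Lemma 9 to $D_{J}^{k}f=d^{k}F/dX^{k}$, which needs $F$ and its successive renormalized values to be differentiable on the Cantor-like set $\mathbf{O}$ in the Riemann--Stieltjes / Radon--Nikodym sense of Theorem 3, and that the finite/asymptotic splitting of $f^{*}(x^{*})=f(x)\pm\delta e^{-F(X)}$ is preserved under repeated differentiation so that $DD_{J}=D_{J}D$ and the two operators are mutually annihilating on pure sectors. I would verify this on self-similar monomials $x^{j}X^{l}$ --- $D$ acts only on the $x^{j}$ factor and $D_{J}$ only on the $X^{l}$ factor --- and then pass to the general case by density of such combinations in $\chi(\mathbf{R}^{*})$. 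A secondary delicate point is the status of variable coefficients: the replication $c_{k}(x)\mapsto c_{k}(X)$ must be read as part of the definition of $\mathcal{L}^{*}(D^{*})$, consistent with the self-similar prescription $F:=f$ used throughout; for constant-coefficient linear ODEs (for instance the harmonic oscillator discussed next) this subtlety disappears.
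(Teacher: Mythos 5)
Your proposal is correct and follows essentially the same route as the paper: the paper's own proof is a two-sentence remark invoking linearity of $\mathcal{L}(D)$ together with Definitions 9 and 10 and the observation that $D$ acts trivially on $\chi_J(\mathbf{O})$ (functions of $X$ being independent of $x$) while $D_J$ acts trivially on $\chi(\mathbf{R})$ (functions of $x$ being constant relative to $X$), which is exactly the mutual-annihilation property at the heart of your argument. Your induction giving $(D^{*})^{k}=D^{k}\oplus D_{J}^{k}$, and your flags about higher-order jump derivatives and variable coefficients, merely make explicit what the paper leaves implicit.
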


Proof follows easily from the linearity of the differential operator ${\cal L}(D)$ and the Definitions 2.10 and 2.11 on the self similar renormalized extensions. Notice also that the operators $D$ and $D_J$ acts trivially on functions of $\chi_j(\bf R)$ and $\chi(\bf O)$ respectively, since $ f\in \chi_J(\bf O)$ is independent of $x$ and $f$ on $\bf R$, on the other hand,  is constant relative to $X$.  $\Box$

\begin{example}{\rm
1. We have seen that the basic independent variable $x$ is extended as $x\mapsto X=\xi^{\beta(\xi)}, \ \beta(\xi)<1$, so that either in the asymptotic limits $x\rightarrow 0$ or $x\rightarrow \infty$, the linear asymptotic $x$ is replaced  as the scale invariant power laws $\xi^{\beta(\xi)}$ or $\xi^{1/\beta(\xi)}$  respectively (the power law behaviour at infinity follows from that  in the vicinity of $x=0$ by duality). Notice further that the corresponding differential equation $\frac{dy}{dx}=1$ (as a Cauchy problem with data $y(1)=1$, say) gets proliferated on $\bf O$ by self similar replication viz, $\frac{dY}{dX}=1, \ X\in \bf O$, in conformity of Proposition 2.4.

Incidentally, let us note that the above smooth proliferation of the simplest linear ODE actually hides the irregular variation of a Cantor function of the form $X$ at the points of the Cantor set concerned, viz, $C$. The present scale invariant representation of the same in  $\bf O$ does reveal the underlying intermittent  variation in the language of the calculus on a non-archimedean space, viz, the variable $X$ remains locally constant on a gap $I_i$ of the associated Cantor set $C$ when the exponent $\beta(\xi)$ varies as $\beta(\xi)=\frac{\beta_i}{\log \log \xi^{-1}}(1+o(1))$, which coincides with the variation exposed in Lemma 2.9 for a vary small exponent $b_0$. In the neighbourhood of a point in the Cantor set $C$, the exponent is frozen to a fixed value,  $s$, the Hausdorff dimension of $C$, so that one now obtains $X=\xi^s$. In the next section, we verify that this variation of $X$ on $C$ is indeed consistent with the class of Cantor functions associated with the homogeneous Cantor sets.

2. Let us recall the well known fact that any smooth function on $\bf R$ is interpreted as a solution of a {\em linear} ODE (of appropriate order). In virtue of the Proposition 3.1,  each such equation is replicated on the asymptotic prolongation set $\bf O$. As a consequence, a simple periodic function, for instance, $\sin x$, would  tend asymptotically to a complicated  periodic pattern in the form $\sin X^{-1}$ for sufficiently large $x\rightarrow \infty$, by duality. This is justified by the following Corollary,  in which we sketch in detail the actual mechanism of the asymptotic proliferation considered here.}  $\Box$
\end{example}

\begin{corollary}
Consider the time dependent harmonic oscillator (HO) equation $$\ddot x+a(t) x=0, \ t\in\bf R.$$ The corresponding self similar replica on $\bf O$ has the form $$\frac{d^2 X}{d T^2_{\delta}} + a(T_{\delta}) X=0 $$ where $T_{\delta}\approx T\delta^{-1}\log \delta^{-1}$ and $a^*(T_{\delta})=a(t^*), \ t\mapsto t^*=\delta^{-1}\times\delta^{\pm T}, \ x\mapsto x^*=x+\delta\times\delta^{\pm X}$ relative to the asymptotic scale $\delta$. In other words, the ordinary time dependent harmonic oscillation would switch over to an equivalent irregular oscillation described, instead, by the self similar replica equation on the totally disconnected set $\bf O$, in the renormalized effective variables $T\sim O(1)$ and $X\sim O(1)$  for a sufficiently large time $t\mapsto t^*$ living in $\bf R^*$ and $0<\delta<<1$.
\end{corollary}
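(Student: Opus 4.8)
The plan is to obtain the statement as a direct specialization of Proposition 4 to the second order operator ${\cal L}(D)=D^2+a(t)$, $D=d/dt$. First I would record the renormalized extensions of the two variables in the regime where the proliferation is nontrivial, i.e.\ the asymptotic sector $t\to\infty$ (the regime $t\to 0$ being its dual): the independent variable gets the prolongation $t\mapsto t^*=\delta^{-1}\times\delta^{\pm T}$ with $T$ the renormalized effective value of $t$, and, by the coherent correlated scaling principle (here tying the prolongations of $t$ and of $x-x_0$), the dependent variable near a point $x_0$ gets $x\mapsto x^*=x_0+\delta\times\delta^{\pm X}$ with $X$ the renormalized effective value of $x-x_0$; both $T$ and $X$ are $O(1)$. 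By Theorem 2 these effective values live in a secondary interval governed by $\tilde\delta=\delta\log\delta^{-1}$, and the variable adapted to the replicated equation is the rescaling $T_\delta=T\delta^{-1}\log\delta^{-1}$, which then ranges over the ``sufficiently large'' time window $(0,N\log N)$, $N=\delta^{-1}$.

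Next I would apply $D\mapsto D^*=D\oplus D_J$ to ${\cal L}(D)$. Linearity of ${\cal L}$ together with Proposition 4 gives
\[
{\cal L}^*(D^*)x^*(t^*)=\big(D^2+a(t)\big)x(t)\ \oplus\ \delta\,\big(D_J^2+a^*\big)e^{-F(X)},
\]
so that vanishing of the composite forces, besides the original oscillator equation on $\bf R$, the self similar replica $\big(D_J^2+a^*\big)e^{-F(X)}=0$ on $\bf O$. Choosing the natural self similar renormalized extension $F(X):=f(X)$ of the solution (a differentiable function being necessarily asymptotically differentiable, as noted in the remarks of Section 2.4) turns the replica into a genuine second order equation for the renormalized dependent variable $X$; by Lemma 9 the jump operator $D_J$ acts on the secondary interval exactly as $d/dT_\delta$ on a gap of the underlying Cantor set, where ordinary calculus is available since $X$ and $T$ are there locally smooth. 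The coefficient obeys the same self similar replication rule, $a(t)\mapsto a^*(T_\delta):=a(t^*)$, i.e.\ the coefficient function evaluated at the prolonged argument $t^*=\delta^{-1}\delta^{\pm T}$. Assembling these pieces yields precisely
\[
\frac{d^2X}{dT_\delta^{\,2}}+a(T_\delta)\,X=0,
\]
and the interpretive claim --- that the smooth time dependent oscillation is replaced by an irregular one --- follows from Theorem 1 and Lemma 8: $T$, hence $T_\delta$, carries the intermittent Cantor staircase structure at the points of $C$, so the prolonged orbit $\big(x(T),\dot x(T)\big)$ is nowhere regular in the ordinary sense.

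The step I expect to be the main obstacle is the scale bookkeeping rather than the operator algebra. One must verify that passing from $d^2/dt^2$ on $\bf R$ to $d^2/dT_\delta^{\,2}$ on $\bf O$ leaves no residual Jacobian factor --- i.e.\ that the rescaling $T_\delta=T\delta^{-1}\log\delta^{-1}$ is exactly the one for which the replicated derivative is a clean second derivative in the renormalized time --- which is where the secondary scale $\tilde\delta=\delta\log\delta^{-1}$ of Theorem 2, and the fact (Lemma 5, Theorem 2) that $X$ is locally constant between the gaps of $C$, are used. One must also justify that the variable coefficient $a(t)$ replicates covariantly, as $a(t^*)$ with no correction terms, since Proposition 4 is phrased for the abstract operator and the coefficient function requires its own pointwise self similar extension in the sense of Section 2.4. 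The sign choices $\pm T,\pm X$ (visible versus dual invisible sector) and the consistency of the duality relation $X\propto\tilde X^{-1}$ through the second order operator are then routine once the jump derivative characterization of Lemma 9 is in hand.
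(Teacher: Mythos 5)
Your route is essentially the paper's: the authors give two proofs, and the first is precisely your argument, namely that the statement is a direct corollary of Proposition 4 (by continuity of the prolongation and of the renormalized variables in the ultrametric topology), with the coefficient $a$ extended pointwise as $a^*(t^*)=a(t)+\delta e^{-a(T)}$ in the sense of Definition 10. The one place where you stop short is exactly the step you flag as ``the main obstacle'': you never actually verify that no residual Jacobian or scale factor survives the passage from $d^2/dt^2$ to $d^2/dT_\delta^2$, nor how an equation in $e^{-F(X)}$ becomes a genuine linear equation in $X$ itself. The paper's second proof exists to close precisely this gap, and it does so by linearization rather than by appeal to Lemma 9: one writes $x^*=x+\delta+X\delta\log\delta^{-1}$ and $t^*=t+T_\delta$ with $t\sim\delta^{-1}$ (i.e.\ expands $\delta\times\delta^{\pm X}\approx\delta\pm X\delta\log\delta^{-1}$ for $X$ small but $O(1)$), substitutes into the extended equation $\ddot{x^*}+a^*(t^*)x^*=0$, neglects terms of higher degree in $\delta$ relative to $\delta\log\delta^{-1}$, and then uses the \emph{homogeneity} (linearity in $x$) of the oscillator equation to cancel the common factor $\delta\log\delta^{-1}$, which is what decouples the replica on $\bf O$ from the original equation on $\bf R$ with no leftover factor and fixes $T_\delta\sim O(\delta^{-1}\log\delta^{-1})$. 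So your proposal is correct in outline and identifies the right pressure point, but to be complete you should carry out this linearization-and-cancellation explicitly; note also that the cancellation relies on the linearity of the HO equation, which is why the Corollary (and Proposition 4) is stated only for linear homogeneous operators.
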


\begin{proof}
We present two proofs. First one is the direct Corollary to Proposition 2.4 and follows by  continuity (the prolongation of point set to a set of the form $\bf O$ and the associated effective renormalized variables are a continuous in the ultrametric topology of $v$.)

For the second proof  we proceed in steps; this  highlights the actual steps that one needs to follow in the proof of the above Proposition in the general setting. (a) The original HO equation in $\bf R$ is extended in $\bf R^*$ by continuity as
$$\ddot{ x^*}+a^*(t^*) x^*=0, \ t^*\in {\bf R^*}$$ where $\dot x^*$ now means derivation in $t^*$ and $a^*(t^*)=a(t)+\delta e^{-a(T)}$. (b) This extension decouples into two separate independent  components, one being the original oscillator component in the real variables  $t$ and $x$ and other being the self similar replica continued in $\bf O$ in the renormalized quantities $X$ and $T_{\delta}$. Homogeneity of the oscillator equation concerned allows one to cancel a common factor $\delta\log \delta^{-1}$. (c) To achieve this goal, one needs to work with    linearized variables of the form  $x^*\approx x+\delta+ X\delta\log \delta^{-1}$ and $t^*\approx t+T_{\delta}, \ t\sim \delta^{-1}$  and neglect terms in higher degree in $\delta$ in comparison to $\delta\log\delta^{-1}$. (d)  Replica HO equation in $\bf O$ is realized for sufficiently small, but nevertheless $ O(1)$, renormalized variables $X$ and $T$ so that the linearization is valid. It follows, as a result, that, $T_{\delta}\sim O(\delta^{-1}\log{\delta^{-1}})$.
\end{proof}

\begin{example}{\rm Consider the constant frequency HO equation $\ddot{ x}+ x=0$ where the time variable $t$ is appropriately rescaled to normalize the frequency to the value 1. In this setting the natural scale of the problem is 1. The elapse of time is thus counted in natural numbers $t\sim N>>1$ and asymptotic variables are introduced as $t^*=N\times N^{ T}, \ x^*=x+N^{-1}\times N^{ X}$ so that $X=Sin \ (N\log N) \ T$ when we choose $x=Sin \ t$ as the fundamental oscillation. The self similar proliferation considered here should, however, be asymptotically sub-dominant and remains imperceptible relative to the ordinary oscillation $x$ for all time. In fact, for sufficiently large $t$, small scale variation in $T$ would remain unobservable compared to the dominant oscillation $Sin \ t$, unless special care is taken to eliminate the dominant one.  {\em The key feature of the present analysis is, however, to highlight the existence of  parallel streaks of irregular oscillations in the form $X=Sin \ (N\log N)\ T$ for $T\sim O(1)$ for each sufficiently large $N$, in the visible asymptotic sector of the form $t\in N(1-\log N, 1+\log N), \ N\rightarrow \infty$}, that could only be manifest in the renormalized variables $T$ and $X$. In other words, one verifies easily that the original harmonic oscillation is retrieved in the form $$\frac{d^2 X}{d T^2_{N}} +  X+\frac{1}{\log N}=0 $$ in renormalized variable $X=\frac{\log |x^*-x|N}{\log N}$ and the normalized renormalized time $T_N=N\log N T, \ T=\frac{\log t^*/N}{\log N}$, for sufficiently large $N$, so that $1/\log N=o(1)$.}  $\Box$
\end{example}

It is important to realize that as long as one continues to work in the frame work of linear time $t$ and identifies $T\propto t$, the oscillation, in the above example, is purely harmonic even for late times. However, in the present extended formalism, $T_N$ is a Cantor's devil staircase, so that the original moderate time smooth sinusoidal oscillation would experience a late time smooth transition into  an irregular devil's sinusoidal oscillation, of course, relative to the linear time $t$, but, nevertheless, interpreted as smooth in the renormalized effective variable $T_N$. Consequently, {\em the ordinary harmonic oscillation is endowed with rich potential of evolving into novel asymptotic patterns that might even  become visible in right linearized effective variables close to the asymptotic sector in the context of duality structure}.   The residual, irregular sinusoidal oscillation should become noticeable if one designs a controlled experiment on harmonic oscillator with the intention of  zooming into  late time oscillation at time $t\sim O(N)$ by freezing the original motion at $t=N$. Conventionally, one should have a typical point $(x(N),\dot x(N))$ in the oscillator phase plane at time $t=N$. In the context of the present formalism, one, instead, expects to see an extended phase orbit $(x(T_N), \dot x(T_N))$ corresponding to the temporal prolongation set $T_N\in (0,N \log N)$ that becomes available as a nonclassical neighbourhood of the frozen instant $t=N$.

The present novel interpretation should have significant applications, for instance, in  turbulence, bio-rhythms, financial variations and other analogous complex systems. In the next section we first show how a linear plane traveling wave can acquire late time nonlinear characteristics. Next, we derive nontrivial power law attenuation of linear wave propagation in a viscoelastic lossy medium directly from the asymptotic scaling, bypassing thereby the need for invoking fractional differential lossy operators as in vogue in the current literature \cite{lossy1, lossy2, lossy3}.

We close this section with the following observation. As the evolution of certain system continues up to an asymptotic time, determined by the characteristic scale $\delta$ of the system, the subsequent evolution is governed by the irregular flow of the jump mode renormalized variable in the form $t\mapsto t^*\approx \delta^{-1}(1+T\log\delta^{-1})$. This transition could be achieved by eliminating the frozen dominant motion in the asymptotic limit of the linear time $t$ to $\delta^{-1}$, the subsequent evolution is being controlled by the renormalized time $T$ that flows intermittently as $T\sim n/\log\delta^{-1}$ in the unit of $(\log\delta^{-1})^{-1}$ for a fixed $\delta$.

\subsection{Novel plane wave asymptotics}

We begin by considering plane wave solution of the form $Ae^{i(-\omega t+kx))}$ of the one dimensional linear wave equation

\begin{equation}\label{wave1}
\frac{\partial^2 u}{\partial t^2}={c^2}\frac{\partial^2 u}{\partial x^2}
\end{equation}
corresponding to non-dispersive traveling wave form satisfying $k^2=\omega^2/c^2, \ \omega>0$ and $-\infty<k<\infty$. In presence of duality structure, this simple traveling wave should experience late time instabilities analogous to wave propagation in a fractal medium. For sufficiently large time $t\sim O(1/\epsilon), \ 0<\epsilon<<1$ the linear flow is inherited, instead, by a renormalized effective time variable $0<T<1$ where $t=\epsilon^{-1}\times \epsilon^{-v(t)} \ \Rightarrow \epsilon t\approx 1 +T$, so that $T=v(t)\log \epsilon^{-1}=\eta^{\alpha}$ (by Theorem 3.1) plays the role of new effective measure in the duality enhanced prolongation set of the form ${\bf O}_t$ on the $t$ axis. Here, $0<\eta<1$ is an limiting rescaled  (dimensionless) variable in ${\bf O}_t$, $\alpha(\epsilon)$ is  slowly varying with $\eta$ and corresponds to the Hausdorff dimension of fractal Cantor set representing ${\bf O}_t$. As a consequence, the linear differential measure $dt$ on the time axis is replaced asymptotically as $dT$.

In association with the above asymptotic time scaling, one also envisages an analogous asymptotic space  scaling (renormalization group) transformation: instead of following the traveling  wave at greater distances as $t\rightarrow \infty$, one instead could zoom into the asymptotic neighbourhood of a spatial point $x_0$, say, by duality, to investigate the small scale wave properties more carefully so that $\Delta x=x-x_0\rightarrow 0$. The asymptotic space rescaling is now defined by $x=x_0+\epsilon\times \epsilon^{-v(x)}\approx x_0+\epsilon +X, \ X=v(x)\epsilon\log \epsilon^{-1}=\xi^{\beta(\epsilon)}$ so that $dx\mapsto dX$ on the spatial prolongation ${\bf O}_x$ for the rescaled  (dimensionless) spatial variable in (0,1) and $\beta(\epsilon)$ is the slowly varying  Hausdorff dimension of the Cantor set representing ${\bf O}_x$ at the scale $\epsilon$. Such a concomitant spatio-temporal rescalings is called, for definiteness, the {\em coherent correlated (spatio-temporal) duality} that exists naturally in the asymptotic sectors of an evolving system in the context of the present duality enhanced analytic framework. In 3 dimension, the spatial prolongation of the neighbourhood of 0 is, in fact, a 3 dimensional space spanned by the respective effective variables $X, \ Y, \ Z$ with associated scaling laws dictated by the respective scaling exponents $\beta_x(\epsilon), \ \beta_y(\epsilon), \ \beta_z(\epsilon)$.

In the one dimensional wave pattern that we are concerned here, the wave equation (\ref{wave1}) now replicates self similarly in the asymptotic prolongation ${\bf O}_t \times {\bf O}_x$ in the form
\begin{equation}\label{wave2}
\frac{\partial^2 u}{\partial T^2}={C^2}\frac{\partial^2 u}{\partial X^2}
\end{equation}
where $C=c/\epsilon$. The deformed plane wave $\sim e^{i(-WT + KX})$ in  self similar variables $T, \ X$ now has the dispersion law $W=C|K|$, where $W/C=(\omega/c)^{\alpha(\epsilon)}$ and $K=k^{\beta(\epsilon)}$ that follow from dimensional analysis. As a consequence, one obtains finally the duality induced deformed dispersion law, $\omega=c |k|^{\beta/\alpha}$, as $t\sim 1/\epsilon\rightarrow \infty$.

Novel patterns in the wave propagation  are  manifest from the fact that the scaling exponents are non-integers $0<\alpha, \ \beta <1$. The constancy of phase velocity $\frac{d\omega}{d k}=c$ for moderate time evolution is broken spontaneously  for sufficiently large time because of late time emergent nonlinearity in the dispersion law. The exact asymptotic phase velocity is determined by the coherent spatio-temporal structure with scaling exponents $\alpha$ and $\beta$. In the special case when $\alpha=\beta$ linear dispersion law continues to hold even on the asymptotic prolongation structure. Such deceptively linear coherence may be interpreted as a result of a very special spatio-temporal coherence due to judicious cancellation of nonlinear scaling exponents. As a consequence, {\em an observational verification of the  linear, rather than the expected nonlinear, dispersion law even at late time need not necessarily be considered as a failure of the present formalism, rather this  late time linearity might in fact reveal a deeper spatio-temporal coherence due to duality}.

{\bf Power law attenuation: }
\vspace{0.2cm}

Power law attenuation of  the traveling plane (acoustic/elastic) wave in the form $k\omega^y$ for $0\le y\le 2$  is observed abundantly in many complex systems such as biological tissues, rocks, polymers, sea-shore sand dunes,  medical imaging such as ultrasound or magnetic resonance elastography and many others\cite{lossy1,lossy2,lossy3} . Applications of  fractional loss operators, for instance, \cite{lossy2} in analyzing the observational data is predominant in  current literature. We now show that the present duality enhanced formalism is potentially quite rich in explaining the origin of the power law attenuation in a unified manner based instead on the classical third order  viscoelastic (dispersive) wave equation

\begin{equation}\label{waveloss}
\frac{\partial^2 u}{\partial t^2}={c^2}\frac{\partial^2 u}{\partial x^2} + \nu\frac{\partial}{\partial t}{{\frac{\partial^2 u}{\partial^2 x}}}
\end{equation}
where $\nu$ relates to the kinematic viscosity (equivalently, a relaxation time) characterizing the medium. The associated dispersion law is obtained as $\omega^2=c^2k^2+(i\omega\nu)k^2$, leading to the standard quadratic attenuation $y=2$, that is observed to arise in most of the simple systems such as air, water, water-oil interface etc \cite{lossy2}.

A complex system such as  biological tissue, on the other hand, is an emergent phenomenon, that is evolved in Nature, following certain well defined rules (that might not be known explicitly),  asymptotically  from some (equivalent class of) simple initial states. We remark that the complex late time evolution must be independent of any specific initial condition that was considered in defining the simple system. Keeping in view of  this reasonable assumption, description of attenuation in a complex lossy system can   be derived {\em as a signature of the underlying  asymptotic prolongation}, respecting the principle of {\em coherent correlated duality} introduced in the context of wave equation (\ref{wave1}), directly from  the classical viscous wave equation (\ref{waveloss}).

By simple rescaling, (\ref{waveloss}) is first rewritten as one involving dimensionless time and space variables $t$ and $x$ so that the propagation velocity $c=1$ and the dimensionless viscosity $\nu$ introduces a new time scale. For a bio-fluid the viscosity $\nu$ is large $\nu>1$ and so the late time scaling, in linearized form, is written as $t=\nu^n(1 +T), \ T=v(t)\log \nu^n\sim O(1)$. The associated coherent correlated scaling for the space variable now has the form $\Delta x=\nu^{-n}+ X, \ X=v(x)\nu^{-n}\log \nu^n\sim O(\nu^{-n})$ , so that the original (dimensionless) equation (\ref{waveloss}) replicates self similarly in the prolongation set ${\bf O}_t\times {\bf O}_x$ in the form
\begin{equation}\label{waveloss2}
\frac{\partial^2 u}{\partial T^2}=\frac{\partial^2 u}{\partial X^2} + \nu\frac{\partial}{\partial T}{{\frac{\partial^2 u}{\partial^2 X}}}
\end{equation}
in the renormalized effective variables $T=\eta^{\alpha(\nu)}\sim O(1)$ and $X=\xi^{\beta(\nu)}\sim O(\nu^{-n})$ with $0<\alpha, \ \beta <1$. The deformed dispersion relation is therefore obtained as $\omega^{2\alpha}=k^{2\beta}-i(\tilde\nu\omega)^{\alpha}k^{2\beta}$ so that $k^{\beta}=\omega^{\alpha}(\frac{1-i(\tilde\nu\omega)^{\alpha}}{1+(\tilde\nu\omega)^{2\alpha}})^{1/2}$ where $\nu={\tilde\nu}^{\alpha}$. Clearly, the attenuation in the deformed plane wave mode is due to the imaginary part of $k^{\beta}=k_r^{\beta}-ik_i^{\beta}$ (say), where $k_i^{\beta}\approx \frac{{\tilde\nu}^{\alpha}}{2} \omega^{2\alpha}$ when $\tilde\nu\omega<<1$. In the other case, that is for $\tilde\nu\omega>>1$, we have instead
$k_i^{\beta}={\tilde\nu}^{-\alpha/2}\sin \frac{\pi\alpha}{4} \omega^{\alpha/2}$. Noting that $0<\alpha, \ \beta <1$, both these power law  formulas are consistent with the results derived from the fractional lossy operator approach in literature \cite{lossy1,lossy2,lossy3} and expected to have wide applications in complex material attenuation problems.

To summarize, Gaussian (quadratic) attenuation of the classical dispersive lossy equation (\ref{waveloss}) for simple systems is shown to have been transformed  asymptotically into a deformed plane wave with power law attenuation characteristic of wave attenuation in a complex material such as biological tissues. The complex wave propagation pattern is traditionally explained as the transformation of the ordinary time or space derivatives operators in the dispersive (lossy ) part in  the classical wave equation (\ref{waveloss}) by an appropriate fractional order ($\alpha$, say) derivatives, allowing thereby continuation of ordinary solution space of the problem to include more general (continuous but nondifferentiable) function spaces. The present formalism, on the other hand, invokes  more general such function spaces quite naturally based on the duality structure and the associated renormalized group transformations. Wave propagation in viscous complex systems is described, in this formalism, as an asymptotic problem in right renormalized effective variables (having intermittent fractal characteristics). An advantage in this approach is that the governing evolutionary equation for the complex  system is a self similar replica of the original simpler problem in the appropriately tuned asymptotic renormalized variables, leading to the correct scaling laws, as verified in the above power law attenuation. The free scaling exponents $0<\alpha, \ \beta <1$ give not only the flexibility  in arriving at correct fit with experimental data, but more importantly shed important light on the inherent characteristics of the complex system concerned. Finally, the scaling laws characterizing a complex system are fabricated delicately by the principle of {\em spatio-temporal coherent correlated duality} as formulated in the context of a partial differential equation as correlated asymptotic rescalings of time and space variables leading to coherent spatio-temporal multifractal structures.

 Generalization of analogous results to linear non-homogeneous equations, for instance, a forced HO equation, is obvious. For nonlinear equations, however, the  linearization does not generally apply. In any case, the present duality enhanced analytic framework is expected to have significant applications in nonlinear problems. Nonlinear ODEs, in particular, is known to produce cascades of multiple scales from nonlinear feedback into the system. In \cite{dp1,dp2},  some interesting applications of the formalism into new instabilities in the plasma turbulence and in the problem of estimating nonlinear period orbits in a class of Lienard systems are already studied. More detailed understanding of the precise role of duality structure in nonlinear differential equations require  separate investigations and would be taken in the subsequent paper II.

\subsection{Fractal Sets}

Fractal sets such as  self similar Cantor subsets $C$ of the real line $\bf R$ are examples of emergent complex systems. By definition, a Cantor set $C$ is a compact, perfect and totally disconnected subset of the closed interval $[0,1]$, say. As a consequence, the ordinary differential calculus of $\bf R$ is unsuitable  in developing an analytic framework on  $C$ so that questions regarding variations or flow of a dynamic quantity as a function of a variable $x$ varying purely on $C$ can not be meaningfully analyzed. The  present extended analytic formalism equipped with duality structure  provides a right framework for a theory of differential equations on a fractal set. As a specific example, let us consider the middle third Cantor set $C_{1/3}$. For simplicity of notation, we, however, continue to denote this set as well by $C$.

We begin by first restating ( c.f. Introduction ) the fundamental {\em selection principle} that is being advocated in this   paper (and especially in II):  Every nonlinear complex system  is supposed to enjoy a privileged duality enhanced soft model $\bf R^*$ of the real number system, that happens to be determined by the  characteristic scales of the system. The renormalized effective variable $X$ in the corresponding prolongation of a point of the extension then offers itself as the most appropriate uniformizing variable   so that the said complex system is awarded a differentiable structure. This principle should also be applicable in a converse sense i.e. a soft extension $\bf R^*$ is supposed to provide a natural smooth structure corresponding to a unique  class of nonlinear systems. Recall that the hard model $\bf R$ corresponds to the  smooth structure for the class of differentiable functions, together with smooth linear systems. A few simple applications of this enhanced differentiability is already analyzed in the previous section. Here, we consider a particular application of this idea into a truly nonlinear system, that is, the Cantor set $C$.

To award a smooth structure on $C$, let us begin by noting that a point $x$ in $C$ is {\em asymptotic}, being a member of the limit set of the iterated function system (IFS)  $F=\{F_1, \ F_2 \} : I\mapsto I, \ I=[0,1]$ such that $F_1(x)=x/3, \ F_2(x)=(x+2)/3, \ 0\le x\le 1$ and $C=F(C)=F_1(C)\bigcup F_2(C)$. Exploiting self-similarity, it is sufficient to consider an asymptotic segment $\tilde C_n$ of $C$ as the subset within a closed interval of the form $I_n=[0,3^{-n}]$ so that $C_n\subset I_n$  and $3^{-n}$ for $n>N, \ N$ sufficiently large, is the privileged scale.

Let $x \in \tilde C_n$. Then there exists $x_n\in I_n, \ {\rm such \ that \ either} \ x_n\in I_{n}/3 \ {\rm or} \  x_n\in (2+I_{n})/3$ and $\underset{n\rightarrow \infty}\lim \  x_n=x $. Further, $I_{n}= F_1(I_{n-1})$ so that $I_{n}=F_1(I_{n})\bigcup G_{n}\bigcup F_2(I_{n})$, where, $G_{n}$ denotes the open gap in $I_{n}$.
Now, $x$ is either a boundary point or a limit point of a sequence of boundary points. In the later case $x$ is asymptotic. Otherwise, $x$,  a boundary point, is itself a limit point of a one sided sequence of boundary points of open gaps, either solely from the right or from the left of $x$. As a consequence, each element  $x\in C$ is realized as a limit of a convergent sequence $\{x_n\}$ and hence inherits a uniform (renormalized) visibility effective value $X(\xi)$ from the visible asymptotic elements in $C$ in the vicinity of the  right neighbourhood of 0.

The classical triadic Cantor set $C$ viewed as a subset of a soft model ${\bf R^*}_c$ now  makes a {\em natural selection}: the soft model ${\bf R}^*_c$ is chosen in such a manner as to allow the prolongation set ${\bf O}\subset{\bf R}^*_c $ to inherit the geometric structure of the given nonlinear set $C$. As a consequence,  $\bf O$ would  be precisely the self similar replica of $C$ in the neighbourhood 0. The set $\bf O$ is further equipped with  the renormalized effective variable $X(\xi)$ of an asymptotic element $x\in C\subset{\bf R}^*_c$ that happens precisely an identical copy of the Cantor function measure with support ${\bf O}$. By translation invariance and also by above remark, this effective $X$ now lives   in the duality enhanced  (prolongation) neighbourhood of every point $x\in C$. The differentiability of a function  $f: C\mapsto R$ now is defined, following Sec. 2.4 on the prolongation neighbourhood of $x\in C$ relative to the effective variable $X$.

To see explicitly how duality structure induces a smooth structure on $C$ relative to the associated Cantor function measure, we note that
 for sufficiently large $n$, an element $\tilde x$ in $I_n$ is {\em invisible} relative to the scale $\delta=3^{-n}$ satisfying  $3^n\tilde x\lessapprox 1$, so that $\tilde x$ is an element of $F_2(I_{n})$. Consequently,  $I_n$, at this level of approximation, is equivalent to the point 0, but $I_{n-1}\supset I_n$ is nontrivial, i.e. contains visible asymptotic numbers. Let $x\in I_{n-1}\backslash I_n$ be such a visible number such that $3^n x\gtrapprox 1$. The visible $x$ must, therefore,  by construction, be an element of the open gap $G_{n-1}$, that is immediate adjacent to $I_n$, in $I_{n-1}$. As $n\rightarrow \infty$, the Lebesgue measure of $I_n\subset {\bf R}^*_c$, involving invisible asymptotic numbers, would diverge (c.f. Sec. 2.3), by an application of the asymptotic selection principle, {\em uniquely }
as $3^n\tilde x=\lambda_n 3^{n(i^{-1}_n 2^{m_n} )}$ where integers $m_n$ increases slowly compared to $n\rightarrow \infty$, and $0<i_n\le 2^{m_n-1}$,  $\lambda_n\sim O(1)$, so that the rescaled visible element, viz,  $3^n x=3^{-nX(\xi)}:=3^{-n(i_n 2^{-m_n} )}$ gets the effective value $X(\xi)$ that lives on the prolongation interval ${\cal P}_n:=[0,3^{-n}\log 3^n]$ and remains constant almost everywhere in ${\cal P}_n$, as $n \rightarrow \infty$. The constant values of $X(\xi)$ is given by $X(\xi)=i_n 2^{-m_n}$ on the $i$th gap of the $m_n$th iterative definition of $C$ (that is, of ${\cal P}_n\cap C$). These values clearly extends continuously across the boundaries of the gaps, so that $X(\xi)$ experiences singular variations relative to $\xi$, viz, $\frac{d X}{d\xi}$ being undefined, at the points of ${\cal P}_n\cap C$. As a consequence, $X(\xi)$ corresponds uniquely to the continuation of the Cantor measure with support  $C$ on the prolongation $\bf O$ as $n\rightarrow \infty$.

\begin{remark}{\rm  The emergence of the renormalized effective value $X(\xi)$ under the influence of duality structure proceeds through two step application of the selection principle: of all possible choices of the prolongation set $\bf O$, those remain hidden as latent possibilities in the neighbourhood of 0 in $\bf R$, the primary definition (choice/selection) of the particular Cantor set, here the triadic set $C$,  as the object of investigation, stimulates one to look for that special prolongation ${\bf O}_c$, that appear {\em naturally as the self similar continuation of $C$ in the non-classical, duality enhanced sector}, together with precisely an inherited  smooth structure from the associated Cantor function measure $X(\xi)$. The definition of the renormalized effective value, as described in the above paragraph as the inverse of the  divergent exponential factor in the definition of associated invisible elements, then makes the final selection.
}
\end{remark}

{\em Function spaces on $C$:}

\vspace{0.2cm}

Let $f: C\mapsto \bf R$, when the set $C$ is viewed as a subset of $\bf R$. Such a function, by definition, is identical with the unique continuation $\tilde f(\tilde x)$, viz, $f(x)=\tilde f(\tilde x), \ x\in C, \ \tilde x\in [0,1]$, and, hence,  is almost everywhere constant on $[0,1]$ except at points on $C$ where it would experience changes  that is, nevertheless,  singular. As a subset of $\bf R^*$, the fractal set $C^*$ enjoys a smooth structure relative to the renormalized effective variable $X(\xi)$ in the duality enhanced self similar prolongation neighbourhood ${\cal N}(x)=x+{\bf O}, \ {\bf O}\subset [-\tilde\delta,\tilde\delta]$ of a point $x\in C\subset [0,1]$ where $\tilde\delta=\delta\log\delta^{-1}, \ \delta=2^{-n}, n\rightarrow \infty$. The original function $f$ is assumed to have natural self similar continuation on the prolongation ${\cal N}(x)$.

\begin{definition}
A function $f: C\mapsto \bf R^*$ is jump differentiable at $x\in C\subset \bf R$   if it is asymptotically differentiable in the prolongation ${\cal N}(x)$ and is denoted as $D_J f(x)$. It follows that $D_J f(x)=\frac{d f(X)}{d X}$.
\end{definition}

\begin{example}{\rm
Let $f_c(x)$ be the Cantor staircase function corresponding to the Cantor set $C$. The prolongation of $f_c$ on the prolongation set ${\cal N}(x)$ is clearly $f_c(x)\mapsto f_c(X)=X$, where $X$ is itself the self similar replica of the Cantor staircase function $f_c$ on ${\cal N}(x)$. As a consequence, $D_Jf_c=1$ for each $x\in C$, and hence $D_J f_c(x)=\chi_C(x), \ x\in \bf R$, where $\chi_C(x)$ is the characteristic function of $C$: $\chi_C(x)=1,$ for $x\in C$ and $\chi_C(x)=0$, otherwise. For $x\in {\bf R}\setminus C$, one recovers the standard result, viz,  $D_J f(x)=D f(x)=0$.This result is indeed consistent with Ref.\cite{parvate}.}   $\Box$
\end{example}

\begin{example}{\rm
The Cantor set $C$ is the limit set of the IFS $F$ on [0,1]. This example now shows that {\em  under the action of this IFS, the differential equation (IVP) $\frac{df}{dx}=1, \ f(0)=0$ is transformed into   $ D_J f_c(x)=\chi_C(x), \ f_c(0)=0$.} This is an alternative dynamical reinterpretation of jump derivative/differential equation in the context of Cantor set.

{\em Solution:}

For $n=0$, the IVP has the solution $f_0(x):=f(x)=x$ so that the full measure of the unit interval is obtained as $\mu[0,1]=f(1)=1$, so that the monotonic $f$ is interpreted as a measure \cite{roger}.

For n=1, we have $I_0=\frac{1}{3}(I_0)\cup G_1\cup \frac{1}{3}(2+I_0)$, where $G_1=(\frac{1}{3},\frac{2}{3})$ is the open gap created at the first application of IFS. The $0^{\rm th}$ level equation on $I_0$ gets replicated  as two self similar copies of itself on smaller closed intervals $F_1(I_0)$ and $F_2(I_0)$ in the variable $x$ and it's translates $x-\frac{2}{3}$ respectively. The replicated equation on the segment $I_1=F_1(I_0)=[0,\frac{1}{3}]$ has the form $\frac{df_1}{dx_1}=1, \ f_1=3^sf_0, \ x_1=2x, \ 0\leq x_1\leq \frac{1}{2}, \ 0 \leq x\leq \frac{1}{4}$, with identical second copy on the interval $F_2(I_0)=[\frac{2}{3},1]$ in the translated variable. After the first dissection of the open gap, the original full measure is reduced to $\frac{2}{3}$. However, in view of the above non-classical scalings of $x$ and $f$, the original full measure is retrieved if enhanced measure $\frac{1}{2}$ instead is awarded on each of the component  intervals $F_1(I_0)$ and $F_2(I_0)$. This measure enhancement is facilitated by demanding the rescaled variable $x_1$ to live in the half interval $[0,\frac{1}{2}]$, so that original variable $x$ is squeezed so as to vary in the contracted interval $[0,\frac{1}{4}]$ rather than in $[0,\frac{1}{3}]$, as in the conventional scenario.
To make the non-classical scaling consistent with the original equation, one invokes the gluing condition $3^{-s}=2^{-1}$ at the boundary point $x_1=\frac{1}{2}$ leading to the Hausdorff dimension of the Cantor set $s=\frac{\log 2}{\log 3}$. The linear contraction of the interval $[0,\frac{1}{3}]$ to $[0,\frac{1}{4}]$ is the analogue of nontrivial  squeezing that is experienced by a linear asymptotic variable in a soft model ${\bf R^*}$ as described in Sec. 2.3. The associated rescaled variable $x_1$ now parallels  the duality induced prolongation.

Alternatively, one may reinterpret the above replication  of the rescaled IVP on $I_1$ as a single copy of the original equation instead on the contracted interval $[0,\frac{1}{2}]$ as the original variable $x$ is supposed to have been approaching continuously to $\frac{1}{2}$, so that the full measure 1 is awarded on the interval $I_1$ by the rescaled measure $f_1: \ \mu[I_1]=f_1(1)=1$ for $x_1=1$. The nontrivial part in this almost trivial rescaling symmetric interpretation is the {\em choice} of the scaling constants 2 and $3^s$, as an application of the fundamental selection principle, that tells that the original measure function $f$ has also been transformed appropriately, so as to award a measure $f\mapsto  \tilde f_1(\frac{1}{2})=3^{-s}$, that must match continuously with the value $\frac{1}{2}$ that is assigned to the gap $G_1$ by the iterative ($n=1$) definition of the Cantor function $f_c$.

By self similarity, this replication proceeds over smaller scales of the form $I_n=3^nI_0$ satisfying the continuity of the boundary values across each $n$th level gap $G_n$, for instance, $2^{-n}$ with the value $3^{-ns}$ assigned on $I_n$. On the closed interval $I_n$, in the vicinity of 0, the original equation has the rescaled form $\frac{df_n}{dx_n}=1, \ f_n=3^{ns}f, \ x_n=2^n x, \ 0\leq x_n\leq 1, \ 0\leq x\leq\frac{1}{2^{n}}$ so that the full measure 1 is now retrieved by the rescaled measure $f_n$ for $s=\frac{\log 2}{\log 3}$, while the original measure function $f$ has been transformed into $n$th level definition of $f_c$ i.e. $ \tilde f_n(\frac{1}{2^n})=3^{-ns}$.

Invoking duality induced prolongation, for sufficiently large $n$, one would now write for asymptotic visible elements $x_n=2^{-n X}\approx 1, \ f_n=3^{-nsf_c(X)}\approx 1 $ (so that the transformed measure at the $n$th level of iteration now has the form $\tilde f_n=3^{-ns}\times 3^{-nsf_c(X)}$), as $x\rightarrow 0$, where $X$, the Cantor stairecase, and $f_c(X)$ are respective renormalized quantities. One obtains finally the limiting scale invariant equation, $\frac{df_c(X)}{dX}=1$ on the prolongation $\bf O$ of 0, for sufficiently small $X$ so that linearization is valid. By translation invariance and self similarity, the scale invariant equation is also valid in the prolongation of every point of $C$. Recalling that $f_c$ must be constant on a gap, the desired result $D_J f_c(x)=\chi_C(x), \ x\in [0,1]$ follows. (Q.E.D.)}
\end{example}

\begin{remark}{\rm The rescaling, the typical choices apart, considered in the above example are trivial at any finite $n$. Nontrivial scalings are realized asymptotically as $n\rightarrow \infty$ by duality structure. Judicious application of the fundamental selection principle also has a dominant role in deriving the desired result.
}
\end{remark}

 We conclude this presentation with two remarks.

1. The differentiability structure defined by the jump derivation $D_J$ allows one to treat  problems of differential equation on $C$ analogous to that in ordinary calculus. As an example, the analogue of a first order equation
\begin{equation}
\frac{dy}{dx}=f(x,y)
\end{equation}
on $\bf R$ can be modeled as a jump differential equation of the form
\begin{equation}
D_J y=\chi_C(x) f(x,y)
\end{equation}
on the Cantor set $C$ \cite{parvate}. Such a jump differential equation can be considered to describe the variation of the quantity $y$ that experiences nontrivial variation only on the Cantor set $C$, otherwise remaining constant almost everywhere in $\bf R$.  Such nontrivial variations in the neighbourhood of a point $x$ of $C$ is then described explicitly by the ordinary equation, instead,  in the renormalized effective variable $x\mapsto X$:
\begin{equation}
\frac{dy}{dX}=\tilde f(X,y)
\end{equation}
where $\tilde f$ is the normalized form of $f$ on the prolongation set of $x$.

2.  Since  jump derivation is equivalent to the ordinary derivative relative to a deformed measure of the form $dX(x)$ for an appropriate Cantor function, the entire Riemann integration theory on $\bf R$  becomes available for a large class of fractal spaces in the form of the Riemann-Stieltjes integration  relative to the deformed measure $dX(x)$.

More details of these applications will be presented in the second part of the present work II.

{\bf Acknowledgement:}
The senior author (DPD) thanks IUCAA, Pune for awarding a Visiting Associateship.

\label{pagefin}
\end{document}